\numberwithin{equation}{section}
\definecolor{goetheblau}{cmyk}{1.00 0.20 00 0.40}
\definecolor{hellgrau}{cmyk}{0.04 0.04 0.05 0.02}
\definecolor{sandgrau}{cmyk}{0.12 0.09 0.13 0}
\definecolor{dunkelgrau}{cmyk}{0.25 0.25 0.30 0.75}
\definecolor{purple}{cmyk}{0.08 1.00 0.30 0.36}
\definecolor{emorot}{cmyk}{0.04 1.00 0.80 0.07}
\definecolor{senfgelb}{cmyk}{0.01 0.25 1.00 0.05}
\definecolor{gruen}{cmyk}{0.62 0.40 0.87 0.09}
\definecolor{magenta}{cmyk}{0.08 0.86 0.12 0.12}
\definecolor{orange}{cmyk}{0 0.70 1.00 0.04}
\definecolor{sonnengelb}{cmyk}{0 0.12 0.95 0}
\definecolor{hellesgruen}{cmyk}{0.40 0.17 0.81 0.07}
\definecolor{lichtblau}{cmyk}{0.80 00 0.06 0.04}
\definecolor{teal}{rgb}{0.0, 0.51, 0.5}
\newtheorem{theorem}{Theorem}[section]
\newtheorem{lemma}[theorem]{Lemma}
\newtheorem{proposition}[theorem]{Proposition}
\theoremstyle{remark}
\newtheorem{remark}[theorem]{Remark}
\theoremstyle{definition}
\newtheorem{definition}[theorem]{Definition}
\newcommand{\N}{\mathbb{N}}
\newcommand{\R}{\mathbb{R}}
\newcommand{\eps}{\varepsilon}
\DeclareMathOperator{\E}{\mathbf{E}}
\renewcommand{\P}{\mathbf{P}}
\newlength\figureheight
\newlength\figurewidth
\title[Click times of Muller's ratchet]{
 The tournament ratchet's clicktime process,\\  and metastability in a Moran model}
\author[J.L. Igelbrink]{Jan Lukas Igelbrink}
\address{Jan Lukas Igelbrink}
\email{igelbrin@math.uni-frankfurt.de}
\author[C. Smadi]{Charline Smadi}
\address{Charline Smadi, Univ. Grenoble Alpes, INRAE, LESSEM, 38000 Grenoble, France
 and Univ. Grenoble Alpes, CNRS, Institut Fourier, 38000 Grenoble, France}
\email{charline.smadi@inrae.fr}
\author[A. Wakolbinger]{Anton Wakolbinger}
\address{Anton Wakolbinger, Goethe-Universit\"at, Institut f\"ur Mathematik, 60629 Frankfurt am Main, Germany}
\email{wakolbinger@math.uni-frankfurt.de}
\DeclareMathSymbol{\lsb@l}{\mathalpha}{letters}{`l}
\subjclass[2020]{Primary 92D15; secondary 60K35,  60J80, 60J27, 60F17}
\keywords{Muller's ratchet, click rate, tournament selection, Moran model, metastability}
\begin{document}
\begin{abstract}
 Muller's ratchet, in its prototype version, models a haploid, asexual population whose size~$N$ is constant over the generations. Slightly deleterious mutations are acquired along the lineages at a constant rate, and individuals carrying less mutations have a selective advantage. In the classical variant, an individual's selective advantage is proportional to the difference between the population average and the individual's mutation load, whereas in the ratchet with {\em tournament selection}  only the signs of the differences of the individual mutation loads matter. In a parameter regime which leads to slow clicking (i.e. to a loss of the currently  fittest class at a rate $\ll 1/N$) we  prove that the rescaled process of click times  of the tournament ratchet converges  as $N\to \infty$  to a Poisson process. Central ingredients in the proof are a thorough analysis of the  metastable behaviour of a two-type Moran model with selection and deleterious mutation (which describes the size of the fittest class up to its extinction time) and a lower estimate on the size of the new fittest class at a clicktime.
\end{abstract}

\normalcolor
\allowdisplaybreaks
\maketitle
\noindent
\tableofcontents
\section{Introduction}  
\label{sec_intro_results}
\noindent{\em Muller’s ratchet} is a prototype model in population genetics; among the pioneering papers are \cite{H78,Stephan1993TheAO,Gordo2000TheDO}. Originally, this model was conceived to explain the ubiquity of sexual reproduction among eukaryotes despite its many costs \cite{M64,felsenstein1974evolutionary}. In its bare-bones version, Muller's ratchet models a haploid asexual
population whose size $N$ is constant over generations.  Slightly deleterious mutations are acquired along the lineages of descent at a constant rate, and an individual's mutational load (i.e.~the number of mutations that was accumulated along the individual's ancestral lineage) compared to the mutational loads of its contemporaries is decisive for the individual's success in the selective part of the reproduction dynamics.

 With the type of an individual being its current mutational load,  the population dynamics leads to a type frequency profile that is ``driven upwards'' by the effect of mutation and ``kept tight'' by the effect of selection. Due to  mutation and randomness in the individual reproduction, the currently fittest class, i.e.~the subpopulation of individuals carrying the currently smallest mutational load,
 %\footnote{Only for sake of brevitiy we will call the individuals (and classes) carrying a smaller mutational load the ``better'' ones. \color{teal} Maybe we should }
 will eventually get extinct; this is a {\em click of the ratchet}. 
\subsubsection*{\bf Tournament versus fitness proportionate  selection}
 For quantifying the effect of an individual mutational load within a population there are two prototypic ways: in the so-called {\em fitness proportionate selection}, the {\em value} of the individual mutational load is compared to the {\em mean value} of the mutational loads in the population, and the individual's selective advantage is proportional to the resulting difference. In the so-called {\em tournament selection}, the selective advantage of an individual is determined by the  {\em rank} of its mutational load within the contemporary population. We will work with the (continuous-time) Moran model for Muller's ratchet with tournament selection, and refer to it as the {\em tournament ratchet} for short. 
Here we  give a brief verbal description of the individual-based, Poisson-process driven dynamics of the tournament ratchet  as specified by the graphical representation  in~\cite{GSW23}, which is in line with the definition of the type frequency process given at the beginning of Section~\ref{sec_model}.

\smallskip
 {\em - For each ordered pair of individuals, neutral reproduction events come at rate $\tfrac 1{2N}$, resulting in a binary reproduction of the first and the death of the second individual.}

 \smallskip
{\em  - For each pair of individuals carrying different mutational loads, selective reproduction events come at rate $s_N/N$, resulting in a binary reproduction of the fitter and the death of the less fit individual.} 

\smallskip
 {\em - Each  individual's mutation load is increased by 1 at rate $m_N^{}$}. 

\smallskip
\noindent 
An appealing feature of the tournament ratchet is that for each $k\in \mathbb N$ the dynamics of the $k$~fittest classes is autonomous up to the time of extinction of the fittest class. In particular, the size of the currently fittest class is described by a two-type Moran model with selection and deleterious mutation.
This ``hierarchical autonomy'' of the fitness classes does not hold true for  Muller's ratchet with fitness proportionate selection, and to the best of our knowledge the asymptotic analysis of the click rate of the latter so far has resisted a complete  and rigorous solution  
despite several attacks, among them \cite{EPW09, neher2012fluctuations, MPV20}. 
Links between the two ratchet models are established in \cite{IGSW}, showing that under an appropriate transformation of the mutation-selection ratio $\rho_N:=m_N/s_N$ in the {\em near-critical parameter regime} $\rho_N \uparrow 1$  the dynamics of the size of the fittest class of the tournament ratchet becomes what is called in~\cite{EPW09} the Poisson profile approximation of the size of the fittest class of Muller's ratchet with fitness proportionate selection.
\subsubsection*{\bf Brief summary of results} 
We focus on a parameter regime of moderate mutation and selection,  with\footnote{As usual, for two positive sequences $(a_N)$, $(b_N)$, the notation $a_N \gg b_N$ means  that $a_N/b_N \to \infty$, and  $a_N\sim b_N$ means that the two sequences are {\em asymptotically equivalent}, i.e. $a_N/b_N \to 1$.} $1\gg s_N > m_N \gg 1/N$, $m_N/s_N = \rho_N$ being convergent as $N\to \infty$, and satisfying the condition
$\mathfrak a_N/\mathfrak c_N\to \infty$, where $\mathfrak a_N := N(1-\rho_N)$ is the center of attraction of the size of the fittest class and $\mathfrak c_N := (s_N-m_N)^{-1}$ is the critical size below which the fittest class (then similar to a slightly supercritical branching processs) experiences quick extinction.   Specifically, it turns out that, after a click followed by a relaxation time of duration  $O(\mathfrak c_N \log \mathfrak a_N)$, the size of the fittest class performs (asymptotically Ornstein-Uhlenbeck) fluctuations on the $\mathfrak c_N$-timescale, and goes extinct after an asymptotically exponentially distributed time on a much larger timescale, then with the next-fittest class taking over. In this sense, under the condition $\mathfrak c_N \ll \mathfrak a_N$, the model exhibits a metastable behaviour. (For more background on the concept of metastability, see e.g. the monograph \cite[Chapter 8]{bovier2016metastability}.)

Thus our main result concerning the tournament ratchet (Theorem~\ref{th:clicktimepois}) is the convergence of the rescaled clicktime process to a standard Poisson process, with   the expected time between clicks  being given up to asymptotic equivalence as $N\to  \infty$ by an explicitly computed   quantity $e_N$. This quantity  appeared in~\cite{IGSW} via a Green function analysis as the asymptotic equivalent of the expected extinction time of the fittest class when started from large sizes, see eq.~\eqref{asympeN} below. %Theorem~\ref{th:clicktimepois} thus proves the conjecture made in~\cite{IGSW}  that 
%$e_N^{-1}$ is also the asymptotic click rate of the tournament ratchet in the above described parameter regime  (there referred to as the {\em exponential regime)}. 

Theorem~\ref{th:clicktimepois} constitutes also a substantial improvement compared to~\cite[Theorem 2.2]{GSW23}, where convergence of the rescaled clicktime process to a standard Poisson process was proved in the special case of the mutation-selection ratio  $m_N/s_N$ not depending on $N$, and where the click rate was determined only up to logarithmic equivalence. The methods applied in the present paper are fundamentally different from those of~\cite{GSW23}: while that paper pursued a backward-in-time strategy (relying on the  duality of the tournament ratchet with a hierarchy of decorated ancestral selection graphs), the present paper takes a forward-in-time approach. As indicated above, a key role  is played by the metastable behaviour of a two-type Moran model with one-way mutation; our corresponding results are subsumed in Theorem~\ref{thmY0}. 

While the present work focusses on the evolution of the fittest class of the tournament ratchet, it is also of interest to study the type frequency profile of the entire population. This was achieved in  \cite{GSW23} for $\rho < 1$ not depending on $N$.  
%The deterministic equilibrium type frequency profile of the tournament ratchet was described in \cite{GSW23} for fixed $\rho < 1$.
In a forthcoming  paper we will extend this to the more delicate case $\rho \uparrow 1$.
%give an accurate description of %this profile as $\rho \uparrow 1$, and we will show that for large times with high probability it is close to the empirical type frequency profile (relative to the currently fittest class).  
Like in \cite{GSW23}, a central tool for analysing the empirical type frequency profile will be a hierarchy of decorated ancestral selection graphs, which extends the decorated ASG of the Moran model with selection and (one-way) mutation (for the latter see e.g.~\cite{baake2018lines} and references therein).
%. which turns out to have Gumbel tails and to be tightly concentrated around $\frac {\log|\log (1-\rho)|)}{\log 2}$ as $\rho\uparrow 1$.

\section{Model and main results}\label{sec_model}
\subsection{The tournament ratchet and its clicktime process}\label{mainconc}
We start by defining the state space and the transition rates of the type frequency process which result from the individual-based dynamics described in the Introduction.
\begin{definition}\label{tfpdyn}
    For given parameters $N\in \N$, and $m_N$, $s_N > 0$, let $\mathfrak N^{(N)}= (\mathfrak N^{(N)}_\kappa)=(\mathfrak N^{(N)}_\kappa(t))$, $\kappa\in \N_0$, $t \ge 0$, be a Markovian jump process with state space $E_N:=\{(n_0, n_1, \ldots)\, | \,$ \mbox{$n_0, n_1, \ldots\in \N_0$}, $n_0+n_1+\cdots = N\}$ and with the following transition rates: 

\smallskip
- Neutral reproduction: for $\kappa \neq \kappa'$,    \\
\phantom{AAA} $(\mathfrak N_\kappa, \mathfrak N_{\kappa'})$ jumps to $(\mathfrak N_\kappa+1, \mathfrak N_{\kappa'}-1)$ at rate $\frac 1{2N} \mathfrak N_\kappa \mathfrak N_{\kappa'}$ 

\smallskip
- Selective reproduction: for $\kappa < \kappa'$, \\
\phantom{AAA}  $(\mathfrak N_\kappa, \mathfrak N_{\kappa'})$ jumps to $(\mathfrak N_\kappa+1, \mathfrak N_{\kappa'}-1)$ at rate $\frac { s_N^{}}N \mathfrak N_\kappa \mathfrak N_{\kappa'}$,

\smallskip
- Mutation: for $\kappa$, \\
\phantom{AAA} $(\mathfrak N_\kappa, \mathfrak N_{\kappa+1})$ jumps to $(\mathfrak N_\kappa-1, \mathfrak N_{\kappa+1}+1)$ at rate $m_N^{} \mathfrak N_\kappa   $.

\end{definition}
  
As in this definition with $\mathfrak N_\kappa= \mathfrak N_\kappa^{(N)}$, we will often suppress the dependence on $N$ to ease  the notation. As motivated in the Introduction, we will refer to a process following the dynamics specified in Definition~\ref{tfpdyn} as the {\em (type frequency process of the) tournament ratchet with population size $N$, selection parameter $s_N$ and mutation rate $m_N$}. Even though the space~$E_N$ is infinite, it is easy to see (e.g. from the graphical construction provided in \cite{GSW23}) that the process $\mathfrak N^{(N)}$ is well-defined. We will address $\kappa$ as the {\em type} or {\em mutation load} carried by an individual; consequently, $\mathfrak N^{(N)}_\kappa(t)$ is the size of the subpopulation (or {\em class}) of the individuals of type $\kappa$ that live at time $t$.
%\color{lightgray}In Theorem ~\ref{th:clicktimepois}  we will work with the starting condition \begin{equation}\label{incond}
%     (\mathfrak N_0^{(N)}(0),\mathfrak N^{(N)}_1(0),\mathfrak N^{(N)}_2(0), \ldots) = (N,0,0,\ldots),
 %\end{equation}
 %meaning that initially the population is monotypic, with all individuals having mutation load zero. \normalcolor
According to Definition~\ref{tfpdyn}, individuals carrying a smaller mutation load are selectively favoured. 
The {\em type of the fittest class at time } $t\ge 0$ is denoted by 
\begin{equation}\label{eq:besttypeKstart}
    K_N^\star(t):= \min\left\{  \kappa: \mathfrak N^{(N)}_{\kappa}(t)>0\right\}.
\end{equation}
The times $\mathcal T_N^{(i)}$, $i=1,2,\ldots$, when the currently fittest class is lost forever,
\begin{equation}\label{def_Ti}
    \mathcal T_N^{(i)}:= \inf \left\{ t: K_N^\star(t)=i \right\},
\end{equation}
are called the  {\em clicktimes} of the ratchet.
For $\ell\in \N_0$ and $t\ge 0$  we put
\begin{equation}\label{defYk}
\mathfrak N_\ell^\star(t) := \mathfrak N_\ell^{(N)\star}(t) :=  \mathfrak N_{K_N^\star(t)+\ell}^{(N)}(t).
\end{equation}
In words, $\mathfrak N_\ell^\star(t)$ is the size of the subpopulation of individuals that live at time $t$ and carry $\ell$ mutations more than those in the fittest class at time $t$. 
\begin{remark}\label{autonomy}As observed in~\cite{GSW23}, 
for each $\ell=0,1,\ldots$ the dynamics of $(\mathfrak N_0^\star, \ldots, \mathfrak N_\ell^\star)$ is {\em autonomous}  up to the next clicktime (at which the ``old'' fittest class goes extinct and $\mathfrak N_0^\star$ re-starts as the size of the ``new'' fittest class).  In particular, $\mathfrak N_0^\star$, the {\em size of the currently fittest class}, has between click times $\mathcal T^{(i)}$ and  $\mathcal T^{(i+1)}$ (and with $\mathcal T^{(i)}$ shifted to the time origin)   the same distribution as a birth-death process~$Y_0$ up to the time of its extinction, with $Y_0(0) = \mathfrak N_1^\star(\mathcal T^{(i)}-)$ for $i\ge 1$, and $Y_0(0)\gg \mathfrak c_N$ for $i=0$ under assumption~\eqref{incondnew}. The  upward and downward jump rates of $Y_0$ from state $n\le N$  are  given by 
\begin{equation}\label{bestupproof}
\lambda_n:=\lambda^{(N)}_n:= n\left(\frac 12\left(1-\frac nN\right) + s \left(1-\frac nN\right)\right),
\end{equation}
\begin{equation}\label{bestdownproof}
\mu_n: = \mu_n^{(N)} := n\left(\frac 12\left(1-\frac nN\right)+m\right).
\end{equation} 
This is the dynamics of the number of type-0 individuals in a two-type Moran model with one-way mutation (from type 0 to type 1) at rate $m$ per individual, and with individual selective advantage $s/N$ of type 0 against type 1; we will come back to this in the next subsection. 
\end{remark}
With $N,m_N, s_N$ as in Definition~\ref{tfpdyn}, we will write
$$\rho_N:= m_N/s_N$$ for the {\em mutation-selection ratio}. 
The following will be assumed as $N\to \infty$: 
\begin{equation}\label{smallmut}
  \tfrac 1N \ll m_N < s_N\ll 1 \quad \mbox{and }  \quad \rho_N \to \rho^\ast \in (0,1].
\end{equation}
Loosely spoken, assumption~\eqref{smallmut} says that mutation is ``moderate'', acting on a timescale that is intermediate between the ecological and the evolutionary timescale, and that selection acts on the same timescale as mutation. 
 We now introduce (and interpret) two quantities $\mathfrak a$ and $ \mathfrak c$ whose interplay will turn out to be relevant for the long-term behaviour of $Y_0$. With
\begin{equation}\label{def_a}
  \mathfrak a =  \mathfrak a_N :=  N (1-\rho_N)
\end{equation}
the drift $\lambda_n-\mu_n$ takes the form 
\begin{equation}\label{drift}
    \lambda_n - \mu_n = {n}(s_N-m_N)\Big(1 - \frac{n}{\mathfrak a}\Big),
\end{equation}
hence $\mathfrak a$
is the {\em center of attraction} of $Y_0$. 
The quantity $s_N-m_N$ is the supercriticality in the branching process that approximates $Y_0$ as long as $Y_0 \ll \mathfrak a$;  consequently the quantity
\begin{equation}\label{defh}\mathfrak c = \mathfrak c_N^{} = \frac 1{s_N^{}-m_N^{}} =   \frac {\rho_N^{}}{m_N^{}(1-\rho_N^{})}
\end{equation}
is the {\em critical size}  below which $Y_0$ will quickly go to extinction. In this work we will focus on the condition  
\begin{equation}\label{clla}
 \mathfrak c_N \ll \mathfrak a_N \, \mbox{ as }N \to \infty.   
\end{equation} 
Under condition~\eqref{smallmut}, the requirement~\eqref{clla} is  equivalent to 
\begin{equation}\label{expreg}
     \mathfrak u_N:=\frac{\mathfrak a_N}{\mathfrak c_N}\rho_N = N m_N (1-\rho_N)^2 \rightarrow \infty.
\end{equation}Following~\cite{IGSW} we will refer to parameter constellations satisfying condition~\eqref{expreg} as the {\em exponential regime}.
For
$$T_0 := T_0^{(N)}:= \mbox{the time of extinction of } Y_0^{(N)}$$ 
it was proved in~\cite[Theorem 3.4 part b)]{IGSW} that
\begin{equation} \label{asympeN}
    \E\left[T_0\, \big| \,Y_0^{(N)}(0) = j_N\right]
        \sim e_N \qquad \mbox{if } j_N \gg \mathfrak c_N,
\end{equation}
where
\begin{equation}\label{eN}
e_N^{}:= \mathfrak c_N^{} \sqrt{\frac \pi{\mathfrak u_N^{}}}\exp\left(2\mathfrak u_N^{}\eta(m_N^{},\rho_N^{})\right),
\end{equation}
and
\begin{equation} \label{def_eta}
\eta(m,\rho):=  -\frac{1}{2m} \left[\frac{1}{1-\rho}\log \left( \frac{1+2m}{1+2m/\rho}  \right)+ \sum_{\ell=1}^{\infty} \left(1- \frac{1}{(1+2m)^\ell}\right) \frac{(1-\rho)^{\ell-1}}{\ell(\ell+1)}\right]. \end{equation}
\begin{theorem}\label{th:clicktimepois}Assume the conditions~\eqref{smallmut}  and~\eqref {expreg}.
Then, with $e_N$ as in~\eqref{eN},
and with the initial condition
\begin{equation}\label{incondnew}
    \mathfrak N_0^{(N)}(0) \gg  \mathfrak c_N \quad \mbox{ as } N\to \infty,
\end{equation}
the sequence of $\N_0$-valued processes $(K_N^\star(e_N^{}t))_{t\ge 0}$ converges in distribution as $N\to \infty$ to a rate 1 Poisson counting process.
In particular, the sequence of time-rescaled clicktime processes
  \begin{equation*}
    \left(  \mathcal T_N^{(i)}/ e_N  \right)_{i\in \mathbb N}, \quad N = 1,2,\ldots,
  \end{equation*}
converges in distribution as $N\rightarrow \infty$ to a rate 1 Poisson point process on $\R_+$. 
\end{theorem}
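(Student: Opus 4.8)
The plan is to show that the rescaled inter-click spacings are asymptotically i.i.d.\ $\mathrm{Exp}(1)$, and then to convert this into the stated point-process convergence. Put $\mathcal T_N^{(0)}:=0$. By Remark~\ref{autonomy} and the strong Markov property of $\mathfrak N^{(N)}$, for every $i\ge0$ the increment $\mathcal T_N^{(i+1)}-\mathcal T_N^{(i)}$ is, conditionally on $\mathcal F_{\mathcal T_N^{(i)}}$, distributed as the extinction time $T_0=T_0^{(N)}$ of the birth--death process $Y_0$ with jump rates~\eqref{bestupproof}--\eqref{bestdownproof}, started from $Y_0(0)=\mathfrak N_1^\star(\mathcal T_N^{(i)}-)$ when $i\ge1$ and from $Y_0(0)=\mathfrak N_0^{(N)}(0)$ when $i=0$. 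Since $\mathfrak N_0^{(N)}(0)\gg\mathfrak c_N$ by~\eqref{incondnew} forces $K_N^\star(0)=0$, the process $(K_N^\star(e_Nt))_{t\ge0}$ is (modulo the simple-clicks property established in Step~3 below) the counting process of the point set $\{\mathcal T_N^{(i)}/e_N\}_{i\ge1}$, so it suffices to prove that for every fixed $k\in\N$ the rescaled spacings $\big(e_N^{-1}(\mathcal T_N^{(i)}-\mathcal T_N^{(i-1)})\big)_{1\le i\le k}$ converge in distribution to $k$ independent $\mathrm{Exp}(1)$ variables. Two ingredients remain: (i) the asymptotic law of $T_0$ as a function of its starting value, and (ii) a lower bound on the random starting values $\mathfrak N_1^\star(\mathcal T_N^{(i)}-)$.

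\textbf{Step 2: metastability of $Y_0$.} Ingredient~(i) is provided by Theorem~\ref{thmY0} together with~\eqref{asympeN}: under~\eqref{smallmut} and~\eqref{expreg} one has $e_N^{-1}T_0\xrightarrow{d}\mathrm{Exp}(1)$ whenever the starting value $j_N$ satisfies $\mathfrak c_N\ll j_N\le N$; moreover $Y_0$ reaches a neighborhood of its center of attraction $\mathfrak a_N$ within a time $O(\mathfrak c_N\log\mathfrak a_N)=o(e_N)$ without going extinct beforehand, with probability tending to $1$. Combining this with monotonicity of $Y_0$ in its initial value over the equilibration window, the Laplace transforms $\E\big[e^{-\theta T_0/e_N}\mid Y_0(0)=j\big]$ converge to $(1+\theta)^{-1}$ uniformly over $j$ in the range $r_N\mathfrak c_N\le j\le N$ for any sequence $r_N\to\infty$; in particular the limit is insensitive to the precise (possibly random) starting value as long as it is $\gg\mathfrak c_N$.

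\textbf{Step 3: the size of the new fittest class at a click.} For ingredient~(ii) we use that, by Remark~\ref{autonomy}, the pair $(\mathfrak N_0^\star,\mathfrak N_1^\star)$ is autonomous up to the click, and we establish
\[
  \WS\!\left(\mathfrak N_1^\star(\mathcal T_N^{(i)}-)\ \ge\ r_N\,\mathfrak c_N\right)\ \xrightarrow[N\to\infty]{}\ 1\qquad(i\ge1)
\]
for a suitable $r_N\to\infty$ with $r_N\mathfrak c_N=o(\mathfrak a_N)$; e.g.\ $r_N=\sqrt{\mathfrak u_N}$ works, since $\mathfrak a_N\rho_N=\mathfrak u_N\mathfrak c_N$ and $\mathfrak u_N\to\infty$ by~\eqref{expreg}. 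The picture behind this: while $\mathfrak N_0^\star$ occupies its metastable level $\asymp\mathfrak a_N$, the class $\mathfrak N_1^\star$ is fed by a mutation influx of rate $\asymp m_N\mathfrak a_N$ from the fittest class while suffering a per-capita loss of rate $\asymp s_N$, so it behaves like a birth--death process with immigration that equilibrates in time $\asymp s_N^{-1}\ll e_N$ around a level $\asymp\rho_N\mathfrak a_N=\mathfrak u_N\mathfrak c_N\gg\mathfrak c_N$; the terminal dying excursion of $\mathfrak N_0^\star$ down to $0$ lasts only $O(\mathfrak c_N\log\mathfrak a_N)$, too short for $\mathfrak N_1^\star$ to fall below $r_N\mathfrak c_N$. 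To make this rigorous one restricts to the large-probability event that $\mathfrak N_0^\star$ stays in $[(1-\delta)\mathfrak a_N,\,N]$ throughout the relevant stretch (a metastability estimate for $\mathfrak N_0^\star$ from Theorem~\ref{thmY0}), dominates $\mathfrak N_1^\star$ from below on that event by an explicit birth--death process with immigration, and controls the last short time interval before $\mathcal T_N^{(i)}$ by a crude subcritical lower bound. Condition~\eqref{expreg} enters here in an essential way, since it is precisely what guarantees $\rho_N\mathfrak a_N\gg\mathfrak c_N$; in passing this also shows that with probability $\to1$ every click is simple, i.e.\ $K_N^\star$ increases by exactly $1$ at each $\mathcal T_N^{(i)}$.

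\textbf{Step 4: conclusion, and the main obstacle.} Fix $k$ and set $A_N:=\bigcap_{i=1}^{k-1}\{\mathfrak N_1^\star(\mathcal T_N^{(i)}-)\ge r_N\mathfrak c_N\}$, so $\WS(A_N)\to1$ by Step~3. Peeling off the conditional Laplace transforms one at a time via the strong Markov property at $\mathcal T_N^{(k-1)},\dots,\mathcal T_N^{(1)}$, using Step~2 at each peel (for the last factor via~\eqref{incondnew}, for the others via the bound $\mathfrak N_1^\star(\mathcal T_N^{(i)}-)\in[r_N\mathfrak c_N,N]$ valid on $A_N$) together with dominated convergence to absorb the complement $A_N^c$, gives
\[
  \E\Big[\prod_{i=1}^{k}e^{-\theta_i\,(\mathcal T_N^{(i)}-\mathcal T_N^{(i-1)})/e_N}\Big]\ \longrightarrow\ \prod_{i=1}^{k}\frac{1}{1+\theta_i}\qquad(\theta_1,\dots,\theta_k\ge0),
\]
so the rescaled spacings converge to independent $\mathrm{Exp}(1)$ variables. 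This yields convergence in distribution of $\{\mathcal T_N^{(i)}/e_N\}_{i\ge1}$ to a rate-$1$ Poisson point process on $\R_+$ and, equivalently, of $(K_N^\star(e_Nt))_{t\ge0}$ to a rate-$1$ Poisson counting process. Steps~1 and~4 are routine, and --- granting Theorem~\ref{thmY0} --- so is Step~2. The real difficulty is Step~3: one must control $\mathfrak N_1^\star$ at the \emph{random} click time, where it is coupled to the dying class $\mathfrak N_0^\star$, and rule out that a rare deep downward excursion of $\mathfrak N_1^\star$ coincides with the extinction of $\mathfrak N_0^\star$; this is where the hierarchical autonomy of the tournament ratchet and the branching/immigration approximation valid in the regime $\mathfrak c_N\ll\mathfrak a_N$ are indispensable.
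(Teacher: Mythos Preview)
Your overall architecture (Steps~1, 2 and~4) matches the paper: reduce to asymptotic i.i.d.\ $\mathrm{Exp}(1)$ spacings via Remark~\ref{autonomy} and Theorem~\ref{thmY0}\ref{thmY0:d}, then peel off conditional Laplace transforms using the strong Markov property. The paper's Section~4.2 does exactly this, with the uniform convergence over $j\in[r_N\mathfrak c_N,N]$ implicit in the formulation of Theorem~\ref{thmY0}\ref{thmY0:d}.

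The substantive divergence is in Step~3, and here your sketch has a real gap. You propose to argue that $\mathfrak N_1^\star$ equilibrates near $\rho\mathfrak a$ while $\mathfrak N_0^\star$ sits in its metastable window $[(1-\delta)\mathfrak a,N]$, and then that the short terminal excursion cannot bring $\mathfrak N_1^\star$ down. Two problems: first, Theorem~\ref{thmY0} does \emph{not} give that $\mathfrak N_0^\star$ stays in $[(1-\delta)\mathfrak a,N]$ throughout the metastable period of length~$e_N$; the OU approximation operates on the $\mathfrak c$-timescale, and over the vastly longer time~$e_N$ the process makes many deep downward excursions (indeed, one of them reaches~$0$). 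Second, and more seriously, the time $L$ at which the terminal excursion begins is a \emph{last} time, not a stopping time, so you cannot invoke equilibration of $\mathfrak N_1^\star$ ``just before $L$'' without a separate argument controlling $\mathfrak N_1^\star$ uniformly in time, or conditioning on the full $\mathfrak N_0^\star$-path.

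The paper circumvents both issues via Theorem~\ref{thmY0}\ref{thmY0:e}, which you do not use: it shows directly that, during the terminal excursion $[L,T_0)$, the number of mutations leaving the fittest class is $\gg\mathfrak c$ with high probability. Proposition~\ref{prop:sizenewbestclass} then does a case split on whether $Y_1(L)>\mathfrak a/4$ (if so, a direct comparison keeps $Y_1\ge\mathfrak a/8$ throughout $[L,T_0)$), and if not, feeds these $\gg\mathfrak c$ immigrants into a slightly supercritical branching lower bound for $Y_1$ on the event that $Y_1$ never exceeds $\mathfrak a/4$ during $[L,T_0)$. This makes the argument independent of the (unknown) state of $Y_1$ at the non-stopping time~$L$, which is exactly what your approach would need to supply.
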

Key ingredients in the proof of this theorem are parts a)-d) of Theorem~\ref{thmY0} (stated in the next subsection and proved in Section~\ref{pfthm1}) as well as Proposition~\ref{prop:sizenewbestclass} which shows that with high probability,  shortly before the fittest class gets extinct, there are still enough mutations affecting this class so that, when the old fittest class disappears, the size of the new fittest class is large enough to escape quick extinction caused by random fluctuations.  In the next subsection we will focus on the long-term behavior of the processes $Y^{(N)}_0$ when started in a sufficiently large state $j_N$.
\subsection{Metastability in a two-type Moran model with selection and deleterious mutations} Let $Y_0 = Y_0^{(N)}$ be an $\{0,1,\ldots, N\}$-valued continuous-time birth-death process with jump rates given by~\eqref{bestupproof}, \eqref{bestdownproof}. As already observed, $Y_0^{(N)}$ describes the number of individuals carrying the beneficial allele $0$ in a two-type Moran model with mutation rate $m=m_N$ from type $0$ to type $1$ and selection coefficient $s=s_N$.   
 %We note for later use that
%\begin{equation}
%\label{atimesc}\mathfrak a_N\mathfrak c_N = \frac N{s_N}\, .
%\end{equation}  
Our main result on this model (parts of which are instrumental also for the proof of Theorem~\ref{th:clicktimepois}) concerns the asymptotic normality of the quasi-equilibrium of $Y_0^{(N)}$, the asymptotic exponentiality of the extinction time of $Y_0^{(N)}$, and the convergence of the properly rescaled $Y_0^{(N)}$ to an Ornstein-Uhlenbeck process.
Recall (see e.g.~\cite{MeleardVillemonais2012}) that
the {\em quasi-equilibrium} of $Y_0=Y^{(N)}_0$ is the (uniquely determined) probability distribution $\alpha_N$ on $\{1,\ldots, N\}$ for which   
\begin{equation}\label{QED}
    \P_{\alpha_N^{}}(Y_0(t) = k\, | \, Y_0(t) \neq 0) = \alpha_N(k), \qquad 1\le k\le N, \, t > 0. 
\end{equation}
The quantity
\begin{equation}\label{defb}
\sigma:= \sigma_N:=   
%\sqrt {\frac {\rho_N N}{s_N^{}}}  
  \sqrt {\rho_N\mathfrak a_N \mathfrak c_N}= \mathfrak c_N\sqrt{\mathfrak u_N}  \, = \frac{\rho_N\mathfrak a_N}{\sqrt{u_N}} = \frac 1{s_N}\sqrt{Nm_N}
\end{equation} 
 will emerge as a scale parameter of the fluctuations of $Y_0^{(N)}$ around its center of attraction $\mathfrak a_N$, cf. Remark~\ref{remBGandrates} for a quick explanation of the form of $\sigma$.
  %The assumptions~\eqref{smallmut} and~\eqref{expreg} imply that
%$$\sigma_N\ll \mathfrak a_N \quad\mbox{as } N\to \infty.$$
\newpage
\begin{theorem}\label{thmY0} Assume the conditions~\eqref{smallmut}  and~\eqref {expreg}. Then %\color{teal} I have swapped the parts in the stamement of the theorem, so that they are now aligned with the order of theit proofs. \normalcolor
\begin{enumerate}[label=\alph*)]
    \item  With $T_\mathfrak a$ denoting the time at which $Y_0^{(N)}$ first hits the state $\lfloor \mathfrak a_N \rfloor$, all three of  the expectations  $$\E[T_\mathfrak a \mid Y_0^{(N)}(0) = N], \, \E[T_\mathfrak a \mid Y_0^{(N)}(0) = 1, \, T_\mathfrak a <T_0], \, \E[T_0 \mid Y_0^{(N)}(0) = \lfloor\mathfrak a\rfloor - 1, \, T_0 < T_\mathfrak a]$$ are \, $O(\mathfrak c_N\log \mathfrak a_N)$. \label{thmY0:a}
\item With $Y_0^{(N)}$ started in $\lfloor \mathfrak a_N\rfloor$, the sequence of processes
\begin{equation}\label{OUconvdef} 
\mathcal H_N := \left(\frac 1{\sigma_N^{}}\left(Y_0^{(N)}(t \mathfrak c_N)-\mathfrak a_N\right)\right)_{t\ge 0}
\end{equation}
  converges in distribution to a standard Ornstein-Uhlenbeck process started in the origin,
  i.e. to the process $\mathcal H$ satisfying the SDE
\begin{equation}\label{OUSDE}
d\mathcal H = -\mathcal H\, dt +d\mathcal W, \qquad \mathcal H(0) = 0,
\end{equation}
with $\mathcal W$ being a standard Wiener process.
\label{thmY0:b}
   \item The sequence of quasi-equilibria $\alpha_N$ of $Y_0^{(N)}$ is asymptotically normal as $N\to \infty$.  Specifically, the image of $\alpha_N$ under the mapping  $n\to (n-\mathfrak a_N)/\sigma_N$ converges weakly to $\mathcal N(0,1/2)$, the centered normal distribution with variance $1/2$.
\label{thmY0:c}
   \item With $Y_0^{(N)}$ started in $j_N \gg c_N$, the sequence $T_0^{(N)}/e_N$, $N=1,2,\ldots$, converges in distribution to a standard exponential random variable.
\label{thmY0:d}
\item For $\eps < 1/3$ let $L=L_\eps^{(N)}$ be the time at which $Y_0$ (having started in $j_N \gg \mathfrak c$) visits $\lfloor2\eps\mathfrak a_N\rfloor$ for the last time before going extinct. Then with high probability the number of mutations affecting the ``type 0'' population (whose size is $Y_0$) during period $[L, T_0]$ is $\gg \mathfrak c.$   \label{thmY0:e}
\end{enumerate}
\end{theorem}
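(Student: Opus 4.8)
The plan is a forward-in-time argument resting on a \emph{last-exit decomposition} at level $b_N:=\lfloor 2\eps\mathfrak a_N\rfloor$. Since $Y_0$ starts in $j_N\gg\mathfrak c_N$ it reaches $\lfloor\mathfrak a_N\rfloor>b_N$ before extinction with probability tending to $1$ (the scale-function escape estimate already behind part a) of Theorem~\ref{thmY0}), so $L=L_\eps^{(N)}<\infty$ with high probability (w.h.p.), and on that event $Y_0\le b_N-1$ throughout $(L,T_0)$ and, going down to $0$, hits $a_N:=\lfloor\eps\mathfrak a_N\rfloor$ at some time $L'<T_0$. Decomposing the path of $Y_0$ at its successive visits to $b_N$ before $T_0$ and using the strong Markov property, one sees that, conditionally on $\{L<\infty\}$, the \emph{marked} path of $Y_0$ on $[L,T_0]$ — the trajectory together with the label ``mutation/neutral'' of each downward jump — has the law of the marked birth–death chain of \eqref{bestupproof}--\eqref{bestdownproof} started at $b_N$ and conditioned to reach $0$ before returning to $b_N$, i.e.\ of the Doob $h$-transform $\widetilde Y$ with $h(n)=\mathbf P_n(\text{hit }0\text{ before }b_N)$; importantly, a downward jump is still a mutation with probability $m_Nn/\mu_n\ (\ge m_N)$ under the conditioning, since the conditioning event depends on the sequence of visited states only. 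It thus suffices to show that, for $\widetilde Y$, the number of mutations accumulated before $\widetilde Y$ hits $a_N$ is $\gg\mathfrak c_N$ w.h.p. As soon as the hitting time $\widetilde\tau_a$ of $a_N$ satisfies $\widetilde\tau_a\gtrsim\mathfrak c_N$ w.h.p.\ — so that $\widetilde Y\ge a_N\asymp\mathfrak a_N$ on $[0,\widetilde\tau_a]$ — the compensator at $\widetilde\tau_a$ of the mutation-counting process of $\widetilde Y$, which has rate $\ge m_N\widetilde Y$, is $\gtrsim m_N\mathfrak a_N\mathfrak c_N$; since $m_N\mathfrak a_N=\mathfrak u_N/(1-\rho_N)\to\infty$ by \eqref{expreg}, this is $\gg\mathfrak c_N$, and a standard exponential tail bound for a counting process against its compensator finishes the argument.

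So the heart of the proof is that the conditioned descent from $b_N$ to $a_N$ is \emph{slow}, because it runs against the strong upward drift of $Y_0$ (which on $[a_N,b_N]$ is $\lambda_n-\mu_n\asymp n/\mathfrak c_N$ by \eqref{drift}). With $\pi_n:=\prod_{j=1}^n\mu_j/\lambda_j$ and, up to a constant, $h(n)=\sum_{k=n}^{b_N-1}\pi_k$, the $h$-transformed rates $\widetilde\lambda_n=\lambda_n h(n{+}1)/h(n)$, $\widetilde\mu_n=\mu_n h(n{-}1)/h(n)$ satisfy, using $\lambda_n\pi_n=\mu_n\pi_{n-1}$,
\begin{equation*}
\widetilde\lambda_n-\widetilde\mu_n=(\lambda_n-\mu_n)-\frac{2\mu_n\pi_{n-1}}{h(n)}.
\end{equation*}
The point is to control $h$. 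Because $\eps<1/3$ forces $n/N\le 2\eps(1-\rho_N)<2/3$ for all $n\le b_N$, one gets $\mu_j/\lambda_j=1-\Theta(1/\mathfrak c_N)$ uniformly in $j\le b_N$, so the series $h(n)/\pi_n=\sum_{k\ge n}\pi_k/\pi_n$ — geometric up to lower-order corrections — has essentially reached its limit once $b_N-n\gg\mathfrak c_N$; quantitatively $h(n)\asymp\mathfrak c_N\pi_n$ for all $n\le b_N':=b_N-\lceil\sqrt{\mathfrak a_N\mathfrak c_N}\rceil$ (here \eqref{clla} gives $\sqrt{\mathfrak a_N\mathfrak c_N}=o(\mathfrak a_N)$, hence $b_N'\gg\mathfrak c_N$ and $b_N'-a_N\sim\eps\mathfrak a_N$). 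Substituting,
\begin{equation*}
\widetilde\lambda_n-\widetilde\mu_n\ \ge\ -\,C\mathfrak a_N/\mathfrak c_N,\qquad \widetilde\lambda_n+\widetilde\mu_n\ \le\ C\mathfrak a_N\qquad(n\le b_N'),
\end{equation*}
for a universal $C$: on the bulk range $[a_N,b_N']$ the conditioned chain carries essentially the \emph{reversed} drift of $Y_0$, of order only $\mathfrak a_N/\mathfrak c_N$. It is crucial that this order is $\mathfrak a_N/\mathfrak c_N$, i.e.\ that fighting the drift slows the descent by a factor $\asymp\mathfrak c_N$ and not merely by a bounded factor — a bounded factor would not do, since $m_N\mathfrak a_N$ can tend to $\infty$ arbitrarily slowly. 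On the remaining thin range $b_N'<n\le b_N-1$ (width $o(\mathfrak a_N)$) the conditioning pushes $\widetilde Y$ down steeply, but by the same bounds the chance that $\widetilde Y$ runs from $b_N'$ down to $a_N$ before returning to $b_N'$ is $\asymp 1/\mathfrak c_N$ per attempt, so $\widetilde Y$ spends in expectation only $O(\mathfrak c_N/\mathfrak a_N)=o(1)$ time at the single level $b_N'$, hence $o(1)$ time there w.h.p.

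Given these estimates, $\widetilde\tau_a\gtrsim\mathfrak c_N$ w.h.p.\ comes from a martingale bound with the clipped function $f(n):=n\wedge b_N'$, which flattens out the anomalous top range: stopping $\widetilde Y$ at $\widetilde\tau_a$, write $f(\widetilde Y_{t\wedge\widetilde\tau_a})-b_N'=M_t+A_t$ with $M$ a martingale, $\langle M\rangle_t\le C\mathfrak a_N t$, and $A$ absolutely continuous with $A_t'\ge-C\mathfrak a_N/\mathfrak c_N$ except while $\widetilde Y_t=b_N'$ (where $A_t'\ge-C\mathfrak a_N$). On $\{\widetilde\tau_a\le t\}$ the left-hand side equals $a_N-b_N'\le-\tfrac12\eps\mathfrak a_N$; combining with the $o(1)$-bound for the time at $b_N'$ and with $(C\mathfrak a_N/\mathfrak c_N)t\le\tfrac14\eps\mathfrak a_N$ for $t\le c\mathfrak c_N$, this forces the event $\{\inf_{s\le c\mathfrak c_N}M_s\le-\tfrac14\eps\mathfrak a_N\}$, of probability $\lesssim\mathfrak a_N\mathfrak c_N/(\eps\mathfrak a_N)^2=O(\mathfrak c_N/\mathfrak a_N)\to0$ by Doob's $L^2$-inequality. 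Hence $\widetilde\tau_a\ge c\mathfrak c_N$ w.h.p., and then, as in the first paragraph, the compensator at $\widetilde\tau_a$ of the mutation count of $\widetilde Y$ is $\ge\theta_N:=\tfrac12 c\,\eps\,m_N\mathfrak a_N\mathfrak c_N$ with $\theta_N/\mathfrak c_N=\tfrac12 c\,\eps\,m_N\mathfrak a_N\to\infty$; the exponential tail bound yields $\mathbf P\bigl(\#\{\text{mutations of }\widetilde Y\text{ before }\widetilde\tau_a\}\le\tfrac12\theta_N\bigr)\le\mathbf P(\widetilde\tau_a<c\mathfrak c_N)+e^{-c'\theta_N}\to0$, and transporting this through the last-exit decomposition gives that the number of mutations hitting the type-$0$ population during $[L,T_0]$ is $\ge\tfrac12\theta_N\gg\mathfrak c_N$ with probability tending to $1$.

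The main obstacle is the bundle of scale-function estimates in the middle step: establishing $h(n)\asymp\mathfrak c_N\pi_n$ on the bulk range with the right order so as to pin the reversed drift of the conditioned chain at $\Theta(\mathfrak a_N/\mathfrak c_N)$, and, inseparably, controlling the anomalous behaviour near $b_N$ (where the $h$-transform forces a steep descent) closely enough to exclude a fast ``cheating'' passage. These are quantitative birth–death computations of the same type already needed for parts a)--d) of Theorem~\ref{thmY0}, now applied to the excursion of $Y_0$ \emph{after} its last visit to $2\eps\mathfrak a_N$ rather than before it.
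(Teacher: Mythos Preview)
Your proposal addresses only part~e), treating parts a)--d) as already established; for part~e) the argument is essentially correct, and its overall scheme---last-exit decomposition, analysis of the conditioned descent, then a mutation count---matches the paper's. The execution, however, is genuinely different.

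The paper conditions on the last exit from $3\eps\mathfrak a$ (not $2\eps\mathfrak a$), passes to the \emph{discrete} embedded chain, computes its one-step probabilities explicitly as $\tfrac12\pm\Theta(m(1-\rho))$ uniformly on $[0,2\eps\mathfrak a]$, couples below by a biased simple random walk, and uses Azuma--Hoeffding to show that the number of \emph{steps} from $\eps\mathfrak a$ down to $0$ is $\ge C\eps N/m$ w.h.p.; multiplying by the per-downward-step mutation probability $\asymp m$ gives $\gtrsim N\gg\mathfrak c$ mutations. Your route instead conditions at $b_N=2\eps\mathfrak a$ directly (so no auxiliary level), stays in continuous time, extracts the $h$-transformed drift from the scale-function asymptotics $h(n)\asymp\mathfrak c_N\pi_n$, and applies Doob's $L^2$-inequality to a clipped martingale to lower-bound $\widetilde\tau_a$ by a multiple of $\mathfrak c_N$; mutations then come from the compensator bound $\int m\widetilde Y\,dt\gtrsim m\mathfrak a\mathfrak c$. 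Both arguments rest on the same elementary input $\mu_n/\lambda_n=1-\Theta(1/\mathfrak c_N)$ on $[0,2\eps\mathfrak a]$ and both terminate at $m_N\mathfrak a_N=Nm_N(1-\rho_N)\to\infty$. The paper's version is more hands-on and sidesteps the boundary-layer issue entirely (the cushion between $3\eps\mathfrak a$ and $2\eps\mathfrak a$ absorbs it); yours is more structural and tracks the theorem statement exactly, at the price of the extra occupation-time estimate at~$b_N'$, which you handle correctly via the $\asymp 1/\mathfrak c_N$ escape probability and Markov's inequality.
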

\begin{remark}\label{Rem2_5}
a) In~\cite[Theorem 3.4 part b)]{IGSW} it is shown that the factor $v_N:= \mathfrak c_N\exp\left(2\mathfrak u_N\eta(m_N,\rho_N)\right)$ in~\eqref{eN} is  asymptotically equivalent to the expected number of returns of  $Y_0$ from~$\lfloor \mathfrak a\rfloor$  to~$\lfloor\mathfrak a\rfloor$.

    b) It follows from~\eqref{defb} that~\eqref{eN} can be written as
\begin{align}\label{eNnew}
    e_N^{} =\frac {1}{\rho_N^{} \mathfrak a_N^{}} \cdot \sqrt{\pi} \,\sigma_N^{} \cdot \mathfrak c_N^{}\exp\left(2u_N^{}\, \eta(m_N^{},\rho_N^{})\right).
\end{align}
The three factors in~\eqref{eNnew} can be interpreted  as the asymptotics of the expected holding time of $Y_0^{(N)}$ in states close to $\mathfrak a_N$ (see Remark~\ref{remBGandrates} b)), the expected duration of an excursion of $Y_0^{(N)}$ fom $\lfloor \mathfrak a_N\rfloor $ in its quasi-equilibrium (cf.~Theorem~\ref{thmY0}.\ref{thmY0:a}) and the expected number of such excursions before $Y_0^{(N)}$ escapes to $0$ (see Lemma~\ref{chalem} A) with $\mathfrak d_N = 1$).

c) Parts b) and c) of Theorem~\ref{thmY0} together imply that the sequence of quasi-equilibria of $\mathcal H_N$   converges weakly to the equilibrium distribution of the Ornstein-Uhlenbeck process $\mathcal H$. 

d) For proving part d), i.e.~the asymptotic exponentiality of the extinction times~$T_0^{N}$, there is also an alternative route which does not direcly invoke the quasi-equilibria $\alpha_N$ but instead uses \cite[Theorem 1]{grubel2005rarity}, see Remark~\ref{altern}.  In the present work we take a route via the analysis of quasi-equilibria, also because we think that this is interesting in its own right.
\end{remark}
\begin{remark} Related results for processes similar to $Y_0^{(N)}$ appear in the literature:

a) Consider, e.g., the case in which $m$ and~$s$ (other than in~\eqref{smallmut})  do not depend on $N$ (i.e. both mutation and selection act on the ``generation timescale''). Then, with $Y_0$ started in $\mathfrak a$,  the sequence of processes $ {\frac 1{\sqrt N}} ( Y_0-\mathfrak a)$ converges in distribution to an OU process as $N\to \infty$ (see~\cite{C17Markovproc} and references therein).

b)
    The ``drift''~\eqref{drift}
is that of a logistic branching process with carrying capacity $\mathfrak a$ and intrinsic growth rate $s-m$. The ``speed'' $\lambda_n+\mu_n$, however, is different from that of a logistic branching process, which prevents an application of results from \cite{ALambert} or~\cite{sagitov2013extinction}.  

c) The dynamics of $Y_0$ also bears resemblance to the so-called {\em logistic birth-and-death processes}, see \cite{foxall2021extinction} and references therein. However, in these processes the ``resampling'' terms $\tfrac 12(1-\tfrac nN) $ are missing in the rates $\lambda_n$ and $\mu_n$, which requires a control of their effects and also leads to subtle differences between the formulas for the expected extinction time in ~\eqref{eN} and in \cite[Theorem 5.3]{foxall2021extinction}.

d) For a class of processes containing the logistic birth-and-death processes and the logistic branching processes, a Gaussian approximation of the quasistationary distributions is obtatined in~\cite{chazottes2016sharp} by matching techniques reminiscent of  the WKB method from Physics, see also the references at the end of the Introduction of~\cite{chazottes2016sharp}.
In~\cite{chazottes2023large} the asymptotics of the spectrum of the rescaled generators of these processes is obtained as the superposition of the spectra of an Ornstein-Uhlenbeck operator and of a continuous-time binary branching process conditioned in non-extinction. It is conceivable that similarly fine results also hold for the family $Y_0^{(N)}$, even though  it does not quite fit into the scaling condition required in~\cite{chazottes2016sharp} and  in~\cite{chazottes2023large}. Also, the techniques applied in the present work may  shed additional light on the probabilistic background of the results of ~\cite{chazottes2023large}. 
\end{remark}
\normalcolor
\begin{remark}
    Within the regime~\eqref{smallmut},
\cite{IGSW} identifies two subregimes {with regard to the expected extinction times of $Y_0$}:    the {\em exponential regime}~\eqref{expreg}
and the {\em polynomial regime} in which 
\begin{equation*}
 \mathfrak u_N \to 0 \quad \mbox{ as } N\to 0. 
\end{equation*}
An extended version of the asymptotics~\eqref{asympeN} is given in \cite[eq.~(3.13)] {IGSW}. In that equation  the factor~$\frac 1{1-\rho}$ is lacking  in the journal publication. This (as well as details in the proof of \cite[Theorem~3.4 part a)]{IGSW} concerning the polynomial regime) was amended in the arxiv version~v3 of~\cite{IGSW}.
\end{remark}
   \section{Proof of Theorem~\ref{thmY0}}\label{pfthm1}
\subsection{Properties of the potential function}\label{propwell}
The analysis of expected hitting times of the process $Y_0 = Y_0^{(N)}$ relies on a study of functionals of the {\em oddsratio products} $r_\ell := r_\ell^{(N)}$  given by
\begin{equation} \label{def_ri} r_0=1, \quad r_l := \prod_{i=1}^l \frac{\mu_i}{\lambda_i}, \quad l \geq 1. \end{equation}
The {\em potential function} $U:= U^{(N)}$ is defined as
\begin{equation}\label{defU}
U(n):= \sum_{l=1}^n \log \frac{\mu_\ell}{\lambda_\ell}= \log r_n, \qquad n=0,1, \ldots, N,     \end{equation}
see Figure~\ref{fig:1} for an illustration.
\begin{figure}
    \centering
    \psfrag{a}{$\mathfrak{a}$}
    \psfrag{b}{\hspace*{-0.6cm}$\mathfrak{a}-\sigma$}
    \psfrag{d}{$\mathfrak{c}$}
    \psfrag{e}{\small$\sqrt{\frac{N}{m}}$}
    \begin{subfigure}[]{0.49 \textwidth}
\includegraphics[scale=0.9]{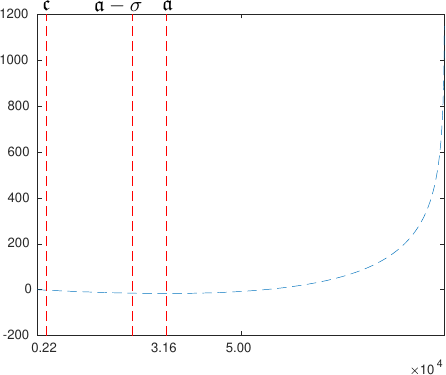}
    \end{subfigure}\hfill 
     \begin{subfigure}[]{0.49 \textwidth}
\includegraphics[scale=0.9]{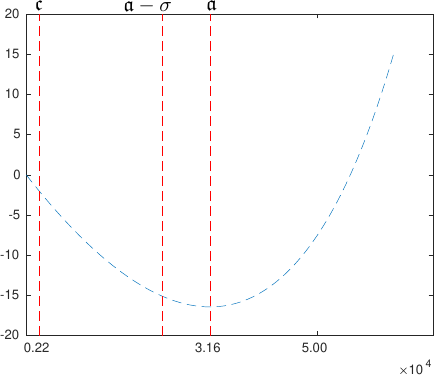}
    \end{subfigure}
    \caption{A plot of the potential function $U(n) = \sum_{\ell=1}^n \log\frac{\mu_\ell}{\lambda_\ell}$, $n\in [N]$,  for $N=10^5$, $m = N^{-0.6}$, $\rho = 1- N^{-0.1}=0.68$. 
    The left panel shows the full domain and range of~$U$,  the right panel restricts to $n\le 6.5\cdot 10^4$.  The quantities $\mathfrak a$, $\mathfrak c$ and~$\sigma$ are defined and explained in~\eqref{def_a}, \eqref{defh} and~\eqref{defb}.} 
\label{fig:1}
\end{figure} 
The function $R=R^{(N)}$ defined as  \begin{equation}\label{RKdef}
    R_k:= \sum_{i=0}^{k-1}r_i =\sum_{i=0}^{k-1}e^{U(i)}, \quad 0\le k \le N,
  \end{equation}
  is harmonic for the time-discrete birth-and-death chain associated with $Y_0$, cf.~\cite[Sec. 4.5]{IGSW}.
  \begin{proposition}\label{Uquad}
  With $\mathfrak a = \mathfrak a_N $, $\sigma=\sigma_N$ and $\mathfrak u_N$ defined in~\eqref{def_a}, \eqref{defb} and~\eqref{expreg} we have the following asymptotics as $N\to \infty$:      \begin{align}\label{Uas}
      U^{(N)}( \mathfrak a+K\sigma)-U^{(N)}(\mathfrak a)
= K^2(1+O(m_N + K/\sqrt{u_N})), \qquad K\in \mathbb R.
\end{align}
  \end{proposition}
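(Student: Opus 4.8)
The plan is to use that $\mathfrak a_N$ is, up to rounding, the minimiser of the potential $U^{(N)}$ in \eqref{defU}, that $U^{(N)}$ is approximately quadratic there, and that --- by the very definition \eqref{defb} of $\sigma_N$ (together with \eqref{def_a}, \eqref{defh}) --- this quadratic has coefficient $1$ in the limit once displacements are measured in units of $\sigma_N$. Write $y=y_n:=1-n/N$. By \eqref{bestupproof}--\eqref{bestdownproof} we have $\mu_n/\lambda_n=\bigl(1+2m/y\bigr)/(1+2s)$, and since $\rho=m/s$ one has $2m/\rho=2s$, so $\mu_{\mathfrak a}/\lambda_{\mathfrak a}=1$ (which is just the defining property of $\mathfrak a$). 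Using $\rho-y_n=(n-\mathfrak a)/N$ this yields the identity
\[ \log\frac{\mu_n}{\lambda_n}=\log\bigl(1+\delta_n\bigr),\qquad \delta_n:=\frac{2m\,(n-\mathfrak a)}{N\rho\,y_n\,(1+2s)} . \]
Working with this ``centered'' form is the key point: a naive term-by-term Taylor expansion of $U(n)=-n\log(1+2s)+\sum_{\ell\le n}\log(1+2m/y_\ell)$ produces two linear-in-$K$ contributions, each of order $\sqrt{Nm_N}\to\infty$, that must cancel each other exactly, whereas in the identity above this cancellation is already built in.

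For $K>0$ (the case $K<0$ is analogous, and $K=0$ trivial), and pretending momentarily that $\mathfrak a$ is an integer,
\[ U(\mathfrak a+K\sigma)-U(\mathfrak a)=\sum_{j=1}^{\lfloor K\sigma\rfloor}\log\bigl(1+\delta_{\mathfrak a+j}\bigr),\qquad \delta_{\mathfrak a+j}=\frac{2mj}{N\rho\,(\rho-j/N)\,(1+2s)} . \]
From $\sigma=\tfrac1s\sqrt{Nm}$, $s=m/\rho$ and $\mathfrak u_N=Nm(1-\rho)^2$ one checks $K\sigma/N=K\rho(1-\rho)/\sqrt{\mathfrak u_N}$, so $j/N=O(K/\sqrt{\mathfrak u_N})$ and $\rho-j/N=\rho\bigl(1+O(K/\sqrt{\mathfrak u_N})\bigr)$ uniformly over $j\le\lfloor K\sigma\rfloor$, whence $\max_j|\delta_{\mathfrak a+j}|=O(Km_N/\sqrt{\mathfrak u_N})\to0$. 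Now $\log(1+x)=x+O(x^2)$, $\sum_{j=1}^{\lfloor K\sigma\rfloor}j=\tfrac12(K\sigma)^2\bigl(1+O(1/(K\sigma))\bigr)$, and $\sigma^2=\rho\,\mathfrak a\,\mathfrak c=N\rho^2/m$ give as main term $\tfrac{2m}{N\rho^2(1+2s)}\cdot\tfrac12(K\sigma)^2=\tfrac{K^2}{1+2s}=K^2(1+O(m_N))$; the correction from $\rho-j/N$ versus $\rho$ is the leading cubic term of the potential, of relative size $O(K/\sqrt{\mathfrak u_N})$; and $\sum_j O(\delta_{\mathfrak a+j}^2)=O(K^3m_N/\sqrt{\mathfrak u_N})$. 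Collecting these yields $K^2\bigl(1+O(m_N+K/\sqrt{\mathfrak u_N})\bigr)$.

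It remains to account for $\mathfrak a$ not being an integer, i.e.\ to pass to the integers at which $U$ is actually defined: replacing $\mathfrak a$ by $\lfloor\mathfrak a\rfloor$ moves the summation range by $O(1)$ terms, each of size $O(\max_j|\delta_{\mathfrak a+j}|)$, and shifts the argument $n-\mathfrak a$ of each $\delta_n$ by $O(1)$, altogether an additional $O(K\sqrt{m_N/N})=O(Km_N/\sqrt{\mathfrak u_N})$, which is absorbed in the stated error (the estimate being informative, with $o(1)$ relative error, at least for every $K$ in a compact set). The argument carries no analytic difficulty once the summand is in the centered form above; the one thing that needs care is the bookkeeping: the two individually divergent linear-in-$K$ terms must cancel exactly, and each remaining piece (the factor $(1+2s)^{-1}$, the cubic term, the quadratic remainder of $\log(1+x)$, the rounding corrections) must be seen to be of relative order at most $m_N$ or $K/\sqrt{\mathfrak u_N}$ and nothing larger. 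All of this follows from the algebraic relations among $\mathfrak a_N,\mathfrak c_N,\sigma_N,\mathfrak u_N,m_N,s_N,\rho_N$; in particular, $U$ having curvature $\approx 2m_N/(N\rho_N^2)$ at $\mathfrak a_N$ together with $\sigma_N^2=N\rho_N^2/m_N$ is exactly what pins $1$ down as the coefficient of $K^2$.
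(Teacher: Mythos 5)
Your proof is correct, and it takes a genuinely different route from the paper. The paper's proof goes through the closed-form function $H$ of~\eqref{link_HNH}--\eqref{def_H} imported from~\cite{IGSW}, Taylor-expands $H$ around $y=1$ using the explicit formulas for $H''$ and $H'''$ (Lemma~\ref{lem_prop_UH}, culminating in~\eqref{HyH1}), and substitutes $y=1+K\rho/\sqrt{\mathfrak u_N}$ into that expansion. You instead work directly with the sum $U(\mathfrak a+K\sigma)-U(\mathfrak a)=\sum_j\log(\mu_{\mathfrak a+j}/\lambda_{\mathfrak a+j})$ and exploit the algebraic identity $\mu_n/\lambda_n=(1+2m/y_n)/(1+2s)$, which yields the exactly centered form $\log(1+\delta_n)$ with $\delta_n=\frac{2m(n-\mathfrak a)}{N\rho\,y_n(1+2s)}$, and then Taylor-expand and sum. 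The arithmetic checks out: $\delta_{\mathfrak a+j}\sim 2mj/(N\rho^2)$, $\sum_{j\le K\sigma}j\sim(K\sigma)^2/2$, $\sigma^2=N\rho^2/m$, giving $K^2/(1+2s)=K^2(1+O(m_N))$ as the main term, and your accounting of the cubic correction $O(K/\sqrt{\mathfrak u_N})$, the $\log(1+x)-x$ remainder $O(K^3m_N/\sqrt{\mathfrak u_N})$, and the rounding error all land inside the stated relative error. What your version buys is self-containedness and transparency: no reliance on the power-series representation of $H$ or its third derivative, and the observation that a naive term-by-term expansion of $U(n)=-n\log(1+2s)+\sum_{\ell\le n}\log(1+2m/y_\ell)$ would create two large linear-in-$K$ terms whose exact cancellation is automatic in the centered form is a useful guard-rail. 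What the paper's version buys is reuse of the $H$-machinery that is needed anyway elsewhere (Lemma~\ref{chalem}, Lemma~\ref{comingdown}, Lemma~\ref{lemtimetoa}), so there the $H$-route is the natural one; used in isolation, yours is the leaner argument. (Both proofs share the same mild elision around non-integrality of $\mathfrak a+K\sigma$, which you correctly note is harmless for $K$ in compacts, the regime in which~\eqref{Uas} is actually invoked in Proposition~\ref{asnormpi}.)
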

  To prove~\eqref{Uas} we use a rescaling of $U^{(N)}$ that takes $\mathfrak a_N$ and $2\mathfrak u_N$ as units of population size and potential depth, respectively. 
In \cite{IGSW} it is proved that
the  potential function  can be represented as
\begin{equation}\label{link_HNH}
 U^{(N)}(n)= -2\mathfrak u_NH\left((m_N,\rho_N),\frac n{\mathfrak a_N}\right),
 \end{equation}
with $H((m,\rho),y)$ defined for $y \in \left[0,\frac 1{1-\rho}\right]$ as 
 \begin{equation} \label{def_H} H((m,\rho),y):=  -\frac{y}{2m} \left[\frac{1}{1-\rho}\log \left( \frac{1+2m}{1+2m/\rho}  \right)+ \sum_{l=1}^{\infty} \left(1- \frac{1}{(1+2m)^l}\right) \frac{(1-\rho)^{l-1}y^{l}}{l(l+1)}\right] > -\infty. \end{equation}
Several properties of the function $y \mapsto H(y):= H((m,\rho), y)$ can be derived from \cite{IGSW} and will be key in showing~\eqref{Uas}  as well as in forthcoming proofs. We collect them in the next lemma:
\begin{lemma}\label{lem_prop_UH} a) For $(m,\rho)\in\R_+\times (0,1)$,
\begin{itemize}
\item $H$ is  strictly concave function 
with $H'(1)=0$ and its  maximal value given by $H((m,\rho),1) = \eta(m,\rho)$ stated in ~\eqref{def_eta}.
\item If $\rho\leq 2/3$, $H$ is nonnegative on $[0,1/(1-\rho)]$, and if $\rho>2/3$,  $H$ is nonnegative on $[0,y_0]$ and negative on $(y_0,1/(1-\rho)]$, with $y_0=y_0(m,\rho)$  satisfying 
$$y_0\sim \frac{2}{\rho} \quad as \ m \to 0.$$
\item The first derivative of $H$ is
  \begin{equation}
 H'(y)=  -\frac{\rho}{2m(1-\rho)}\log \left( 1 + \frac{2m}{\rho}  \frac{(1-\rho)(y-1)}{(1+2m/\rho)(1-(1-\rho)y)} \right) \label{der_H}.
  \end{equation}
  \end{itemize}
  b) As $m\to 0$ the function $H$
 satisfies the following asymptotics: 
% {\color{teal}Please check the insertion in violet, cf. IGSW Remark 3.6.a}
\begin{equation}\label{Hin1}
    \eta(m,\rho) = H(1) \sim \frac 1{(1-\rho)^2}\left(\frac 1\rho-1+\log \rho\right) >\frac 1{2\rho},   
\end{equation}
\begin{equation} 
H'(y) \sim \frac{1-y}{\rho(1-(1-\rho)y)}  \quad \mbox{ for all } y\in [0,1/(1-\rho)),\label{derHasym}    
\end{equation}
\begin{equation}\label{Hin2}
      H''(1) \sim-1/\rho^2,\qquad H'''(1)\sim- 2(1-\rho)/\rho^3,\quad H'''\mbox{ is bounded close to }1,
\end{equation}
\begin{equation}\label{HyH1}H(y) - H(1) + \frac{(1-y)^2}{2\rho^2} = O\left((1-y)^2m+(1-y)^3\right) \quad \mbox{as }\, y \to 1.
\end{equation}
\end{lemma}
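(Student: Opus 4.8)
The plan is to derive part~a) directly from the series \eqref{def_H} (its statements are, via the identity \eqref{link_HNH}, essentially a repackaging of facts from \cite{IGSW}). Differentiating \eqref{def_H} term by term in $y$ and summing the resulting power series with $\sum_{\ell\ge1}w^\ell/\ell=-\log(1-w)$ collapses it to a single logarithm; after simplification this is \eqref{der_H}, and putting $y=1$ makes the argument of that logarithm equal to $1$, so $H'(1)=0$. A further differentiation of \eqref{der_H}, after simplification, gives
\[
H''(y)=-\frac{1}{\bigl(1+2m-(1-\rho)y\bigr)\bigl(1-(1-\rho)y\bigr)},
\]
whose denominator is strictly positive on $[0,1/(1-\rho))$, so $H$ is strictly concave there; together with $H'(1)=0$ this makes $y=1$ the unique maximiser, and substituting $y=1$ into \eqref{def_H} and comparing with \eqref{def_eta} identifies the maximal value as $\eta(m,\rho)$. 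For the sign statements I would use that $H(0)=0$, so concavity with interior maximum at $y=1>0$ forces $H>0$ on $(0,1]$ and leaves at most one further zero $y_0\in(1,1/(1-\rho)]$; whether this zero occurs is decided by the sign of $H$ at the right endpoint, which I would compute by summing the series in \eqref{def_H} in closed form via $\sum_{\ell\ge1}w^\ell/(\ell(\ell+1))=1+\tfrac{1-w}{w}\log(1-w)$. As $m\to 0$ the endpoint condition reduces to the transcendental relation $-\log(1-w)=w/\rho$, whose root $w_0$ gives $y_0=w_0/(1-\rho)$; the value $\rho=2/3$ enters as the place where $2/\rho$ meets the right endpoint $1/(1-\rho)$, and there $y_0\sim 2/\rho$.

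All four asymptotics in part~b) then follow by expanding these closed forms as $m\to 0$. For \eqref{Hin1} I would sum the series at $y=1$ to the closed form $\eta=H(1)=\bigl((1+2m)\log\tfrac{\rho+2m}{1+2m}-\log\rho\bigr)\big/\bigl(2m(1-\rho)^2\bigr)$ and insert $\log\tfrac{\rho+2m}{1+2m}=\log\rho+2m\tfrac{1-\rho}{\rho}+O(m^2)$; the inequality $\eta>\tfrac1{2\rho}$ reduces in the limit to $1-\rho+\rho\log\rho>\tfrac12(1-\rho)^2$ on $(0,1)$, which follows since $g(\rho):=1-\rho+\rho\log\rho-\tfrac12(1-\rho)^2$ satisfies $g(1)=g'(1)=0$ and $g''(\rho)=\tfrac1\rho-1>0$ on $(0,1)$. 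For \eqref{derHasym} I would Taylor-expand the logarithm in \eqref{der_H} to first order in $m$. For \eqref{Hin2} I would differentiate \eqref{der_H} further: the expression for $H''$ above and
\[
H'''(y)=-(1-\rho)\,\frac{\bigl(1+2m-(1-\rho)y\bigr)+\bigl(1-(1-\rho)y\bigr)}{\bigl((1+2m-(1-\rho)y)(1-(1-\rho)y)\bigr)^{2}}
\]
are evaluated at $y=1$ (where the two factors are $\rho+2m$ and $\rho$) and $m\to 0$ is taken; $H'''$ is bounded near $1$ because its denominator stays bounded away from $0$. Finally \eqref{HyH1} is Taylor's theorem at $y=1$: with $H'(1)=0$ and the exact $H''(1)=-\bigl(\rho(\rho+2m)\bigr)^{-1}$ the quadratic term contributes $\bigl(\tfrac1{2\rho^{2}}-\tfrac1{2\rho(\rho+2m)}\bigr)(1-y)^{2}=O\bigl(m(1-y)^{2}\bigr)$, and the cubic remainder is $O\bigl((1-y)^{3}\bigr)$ by the boundedness of $H'''$.

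The part I expect to require the most care is justifying the term-by-term differentiation and the $m\to 0$ passage to the limit inside the infinite sum up to and including the right endpoint $y=1/(1-\rho)$: there the summands decay only like $\ell^{-2}$ and $1-(1+2m)^{-\ell}$ is not uniformly small in $\ell$, so I would split the sum at $\ell\asymp 1/m$ and treat head ($1-(1+2m)^{-\ell}=O(m\ell)$) and tail ($1-(1+2m)^{-\ell}=O(1)$ against an $O(\ell^{-2})$ weight) separately. Together with the analysis of the transcendental equation that pins down the role of $\rho=2/3$ and the asymptotics of $y_0$, this is the only nonroutine step; everything else is a Taylor expansion in $m$ and in $1-y$.
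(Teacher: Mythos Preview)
Your proposal is correct, and for the parts the paper actually proves---namely \eqref{Hin2} and \eqref{HyH1}---you take essentially the same route: compute $H'''$ from the closed form of $H''$, bound it near $y=1$, and obtain \eqref{HyH1} by Taylor's theorem using the exact value $H''(1)=-\bigl(\rho(\rho+2m)\bigr)^{-1}$ to isolate the $O(m(1-y)^2)$ piece. For everything else (all of part~a), as well as \eqref{Hin1} and \eqref{derHasym}) the paper simply cites \cite[p.~135]{IGSW}, so your self-contained derivations via summing the series and your endpoint analysis near $y=1/(1-\rho)$ go beyond what the paper does; in particular the careful splitting of the sum at $\ell\asymp 1/m$ that you flag is not needed for the paper's own proof, which works entirely from the closed form of $H'$.
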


\begin{proof}
Only \eqref{Hin2} and~\eqref{HyH1} have to be proven, for the other statements see \cite[p.~135]{IGSW}.
From
$$ H''(y)=\frac{1}{2m} \Big[ \frac{1}{1+2m-(1-\rho)y}- \frac{1}{1-(1-\rho)y} \Big] $$
(see \cite[Sec. 4.7] {IGSW}), we derive
\begin{align} \label{expr_H3} H'''(y)&=\frac{1-\rho}{2m} \Big[ \frac{1}{(1+2m-(1-\rho)y)^2}- \frac{1}{(1-(1-\rho)y)^2} \Big] \nonumber \\
&= -\frac{2(1-\rho)(1+m-(1-\rho)y)}{(1+2m-(1-\rho)y)^2(1-(1-\rho)y)^2}
. \end{align}
Hence
\begin{equation} \label{H3}  H'''(1)\sim - \frac{2(1-\rho)}{\rho^3} \quad \text{and} \quad \sup_{0 \leq y \leq 1} \big| H'''(y)\big| \leq \frac{2(1-\rho)(1+m)}{\rho^4}. \end{equation}
Now, from Taylor-Lagrange formula, we know that for $y \in (0,1)$, there exists $z \in (y,1)$ such that
\begin{align*}
    H(y)&=H(1)+ (y-1)H'(1) + \frac{(y-1)^2}{2}H''(1) + \frac{(y-1)^3}{6}H'''(z)\\
    &= H(1) + \frac{(y-1)^2}{2}\frac{1}{2m} \Big[ \frac{1}{\rho+2m}- \frac{1}{\rho}  \Big]+ \frac{(y-1)^3}{6}H'''(z) .
\end{align*}
According to the second part of \eqref{H3}, 
$$ \Big|  \frac{(y-1)^3}{6}H'''(z) \Big| \leq  \frac{2(1-\rho)(1+m)}{\rho^4}\frac{(1-y)^3}{6} .$$
From the expression of $H''$ we also get
$$ \Big| H''(1)+\frac{1}{\rho^2} \Big| = \frac{2m}{\rho^2 (\rho+2m)} \leq \frac{2m}{\rho^3}. $$
This ends the proof of the lemma.\end{proof}
\begin{proof}[Proof of Proposition~\ref{Uquad}]
 As $N\to \infty$, Equations \eqref{defb}, \eqref{link_HNH} and \eqref{HyH1} imply the asymptotics 
\begin{align*}U( \mathfrak a+K\sigma)-U(\mathfrak a)
&=  U(\mathfrak a(1+ K\sigma / \mathfrak a)) -U(\mathfrak a) 
\\&= U(\mathfrak a(1+ K\rho / \sqrt {\mathfrak u_N})) -U(\mathfrak a)
\\& = -2\mathfrak u_N (H(1+K\rho / \sqrt {\mathfrak u_N}) -H(1))
\\&=  2\mathfrak u_N (K\rho/\sqrt {\mathfrak u_N})^2 /(2\rho^2) +u_N\, O\left(\frac {K^2}{u_N} m_N+ \frac {K^3}{u_N^{3/2}}\right)  \\ &= K^2(1+O(m_N + K/\sqrt{u_N})).
\end{align*}  
\end{proof}
\normalcolor
\begin{lemma}\label{chalem}
  Let us denote by $T_\ell$ the first time at which $Y_0 = Y_0^{(N)}$ reaches the state $\ell$. 
   \begin{enumerate}[label=(\Alph*)]
       \item Let $\mathfrak d_N \leq \mathfrak a_N$ such that $\mathfrak d_N = o(\sigma_N)$. Then  
        \begin{equation*}%\label{hitprobgen}
        \P_{\lfloor \mathfrak a_N - \mathfrak d_N \rfloor} \left(T_0<T_{\mathfrak a_N}\right)\sim  \frac 2{\mathfrak c_N} \mathfrak d_N e^{-2\mathfrak u_N H(1)} \quad \mbox{as } N\to \infty.
    \end{equation*} \label{hitprobgenenum}
    \item Let $1 \ll \mathfrak d_N<\mathfrak a_N$, $ \mathfrak f_N>2$. Then for $N$ large enough,
    \begin{align*} %\label{maj_proba_hitting}
\mathbf P_{\left\lfloor \mathfrak a_N-\mathfrak d_N\right\rfloor}(T_{\mathfrak a_N-\mathfrak f_N  \mathfrak d_N }< T_{\mathfrak a_N}) & \leq e^{-(\mathfrak f_N-2) \mathfrak d_N^2\mathfrak{u}_N /\mathfrak a_N^2}.
\end{align*}\label{maj_proba_hittingenum}
   \end{enumerate}
\end{lemma}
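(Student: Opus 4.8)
Both estimates are one--dimensional exit--probability computations for the birth--death chain $Y_0$, done through the harmonic function $R$. Since $R_k=\sum_{i<k}r_i=\sum_{i<k}e^{U(i)}$ is harmonic and strictly increasing, optional stopping gives, for integers $0\le\alpha<\xi<\beta\le N$,
\[
\P_\xi(T_\alpha<T_\beta)=\frac{R_\beta-R_\xi}{R_\beta-R_\alpha}=\frac{\sum_{i=\xi}^{\beta-1}r_i}{\sum_{i=\alpha}^{\beta-1}r_i},
\]
so everything reduces to controlling partial sums of the $r_i$. On $\{0,\dots,\lfloor\mathfrak a_N\rfloor\}$ the drift is $\ge0$ and $\mu_n/\lambda_n$ is non--decreasing, i.e.\ $U$ is non--increasing and convex and $r_i$ is non--increasing there; moreover \eqref{link_HNH} expresses potential gaps through $H$, and from \eqref{der_H} together with $-\log(1-t)\ge t$ one reads off the clean uniform bound $H'(t)\ge(1-t)/(1+2s_N)$ for $t\in[0,1)$, hence $H(v)-H(w)\ge\big((1-w)^2-(1-v)^2\big)/\big(2(1+2s_N)\big)$ for $0\le w\le v\le1$; finally $U$ is approximately quadratic near $\mathfrak a_N$ (Proposition~\ref{Uquad}). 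These are the only inputs; also $\sigma_N\to\infty$ and $\rho_N\ge\rho^\ast/2$, $s_N\to0$.

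\textbf{Part (A).} Take $\alpha=0$ (so $R_0=0$), $\xi=\lfloor\mathfrak a_N-\mathfrak d_N\rfloor$, $\beta=\lfloor\mathfrak a_N\rfloor$, giving $\P_\xi(T_0<T_{\lfloor\mathfrak a_N\rfloor})=(R_\beta-R_\xi)/R_\beta$. For the numerator: since $\mathfrak d_N=o(\sigma_N)$ and $\sigma_N\to\infty$, every $i\in[\xi,\beta)$ satisfies $|i-\mathfrak a_N|/\sigma_N=o(1)$, so Proposition~\ref{Uquad} gives $U(i)=-2\mathfrak u_NH(1)+o(1)$ uniformly, whence $\sum_{i=\xi}^{\beta-1}r_i=(1+o(1))\,\mathfrak d_N\,e^{-2\mathfrak u_NH(1)}$. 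For the denominator I claim $R_\beta\sim\mathfrak c_N/2$: split at $\lfloor\eps\mathfrak a_N\rfloor$; for $i<\eps\mathfrak a_N$ the factors $\mu_j/\lambda_j=\tfrac1{1+2s_N}\big(1+\tfrac{2m_N}{1-j/N}\big)$ are squeezed between $(1+o(1))\tfrac{1+2m_N}{1+2s_N}$ and their value at $\eps\mathfrak a_N$, so $r_i$ lies between two geometric sequences whose partial sums are $\tfrac{\mathfrak c_N}{2}(1+o(1))\big(1+O(\eps)\big)$ (using $\tfrac{m_N(1-\rho_N)}{s_N-m_N}=\rho_N$); for $i\ge\eps\mathfrak a_N$, concavity of $H$ with $H(0)=0$ gives $H(i/\mathfrak a_N)\ge\eps H(1)\ge\eps/2$, so $r_i\le e^{-\eps\mathfrak u_N}$ and the tail is $\le\mathfrak a_Ne^{-\eps\mathfrak u_N}=o(\mathfrak c_N)$; let $N\to\infty$ and then $\eps\to0$. (This asymptotic also follows from the Green--function analysis of \cite{IGSW}.) Dividing yields $\P_\xi(T_0<T_{\lfloor\mathfrak a_N\rfloor})\sim\tfrac2{\mathfrak c_N}\mathfrak d_Ne^{-2\mathfrak u_NH(1)}$.

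\textbf{Part (B).} Put $\alpha=\lfloor\mathfrak a_N-\mathfrak f_N\mathfrak d_N\rfloor$; if $\alpha<0$ the probability is $0$ and there is nothing to prove, so assume $\alpha\ge0$, and set $\xi=\lfloor\mathfrak a_N-\mathfrak d_N\rfloor$, $\beta=\lfloor\mathfrak a_N\rfloor$, $m=\beta-\xi$. Since $r_i$ is non--increasing on $[\alpha,\beta)$, the $\beta-\xi$ summands of the numerator are the smallest among the $\beta-\alpha$ summands of the denominator, so always
\[
\P_\xi(T_\alpha<T_\beta)\ \le\ \frac{\beta-\xi}{\beta-\alpha}\ =\ \frac{m}{m+(\xi-\alpha)}\ .
\]
If $(\mathfrak f_N-2)\mathfrak d_N$ stays bounded this already suffices: then $\mathfrak f_N\to2$, so $(\mathfrak f_N-2)\mathfrak d_N^2\mathfrak u_N/\mathfrak a_N^2\le\mathrm{const}\cdot\mathfrak d_N\mathfrak u_N/\mathfrak a_N^2\le\mathrm{const}\cdot\mathfrak u_N/\mathfrak a_N\to0$, so the asserted bound tends to $1$, while the right side above is $\le\tfrac{\mathfrak d_N+1}{2\mathfrak d_N-1}=\tfrac12+o(1)$ (using $m\le\mathfrak d_N+1$, $\xi-\alpha\ge\mathfrak d_N-1$, $\mathfrak d_N\gg1$). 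Otherwise $(\mathfrak f_N-2)\mathfrak d_N\to\infty$; then $\alpha+m\le\xi$ for $N$ large, and dropping all but the first $m$ denominator terms, then using $r_i\le r_\xi$ on $[\xi,\beta)$ and $r_i\ge r_{\alpha+m}$ on $[\alpha,\alpha+m)$,
\[
\P_\xi(T_\alpha<T_\beta)\ \le\ \frac{\sum_{i=\xi}^{\beta-1}r_i}{\sum_{i=\alpha}^{\alpha+m-1}r_i}\ \le\ \frac{m\,r_\xi}{m\,r_{\alpha+m}}\ =\ e^{\,U(\xi)-U(\alpha+m)} .
\]
By \eqref{link_HNH} and the $H'$--bound (integrated between $(\alpha+m)/\mathfrak a_N$ and $\xi/\mathfrak a_N$), together with $\mathfrak a_N-\xi\le\mathfrak d_N+1$ and $\mathfrak a_N-(\alpha+m)\ge(\mathfrak f_N-1)\mathfrak d_N-1$, and the factorisation $\big((\mathfrak f-1)\mathfrak d-1\big)^2-(\mathfrak d+1)^2=\mathfrak f\mathfrak d\big((\mathfrak f-2)\mathfrak d-2\big)$,
\[
U(\xi)-U(\alpha+m)\ \le\ -\,\frac{\mathfrak f_N\mathfrak d_N\big((\mathfrak f_N-2)\mathfrak d_N-2\big)\,\mathfrak u_N}{(1+2s_N)\,\mathfrak a_N^2}\ .
\]
Since $\mathfrak f_N\big((\mathfrak f_N-2)\mathfrak d_N-2\big)\ge(1+2s_N)(\mathfrak f_N-2)\mathfrak d_N$ as soon as $(\mathfrak f_N-2)\mathfrak d_N\ge\tfrac{2\mathfrak f_N}{\mathfrak f_N-1-2s_N}$ — and the right side stays bounded (in $[2,8]$ for $s_N\le\tfrac14$) while the left tends to $\infty$ — the exponent is $\le-(\mathfrak f_N-2)\mathfrak d_N^2\mathfrak u_N/\mathfrak a_N^2$ for $N$ large, which is the claim.

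\textbf{Main obstacle.} The conceptual content is short; the work is uniformity. In (A) the only real point is pinning the denominator constant to $\mathfrak c_N/2$ (the geometric--series squeeze above, or a citation). In (B) one must make every estimate uniform over the otherwise arbitrary sequences $(\mathfrak d_N),(\mathfrak f_N)$, which is why it pays to keep the $H$-- and $U$--estimates in the explicit non--asymptotic form $H'(t)\ge(1-t)/(1+2s_N)$ (so that $\P_\xi(T_\alpha<T_\beta)\le e^{U(\xi)-U(\alpha+m)}$ and the displayed lower bound for $U(\xi)-U(\alpha+m)$ hold without any $o(1)$), and to separate off the degenerate regime $(\mathfrak f_N-2)\mathfrak d_N=O(1)$ — where the asserted exponent vanishes and the crude bound $\tfrac{\beta-\xi}{\beta-\alpha}\le\tfrac12+o(1)$ wins — so that floor--function adjustments, harmless since $\mathfrak d_N\gg1$, never accumulate.
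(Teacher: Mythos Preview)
Your proof is correct and follows the same strategy as the paper: both parts reduce to the exit formula $\P_\xi(T_\alpha<T_\beta)=(R_\beta-R_\xi)/(R_\beta-R_\alpha)$ and control partial sums of the $r_i$ via the potential well and the lower bound $H'(y)\ge(1-y)/(1+2s_N)$ (equivalently $(1-y)/(1+2m_N/\rho_N)$, which is exactly the paper's bound). For~(A) you supply your own geometric--squeeze argument for $R_{\lfloor\mathfrak a\rfloor}\sim\mathfrak c_N/2$, where the paper simply cites \cite[Lemma~4.6]{IGSW}. For~(B) the paper replaces the sums by integrals via an intermediate ``$\sim$'' and then uses concavity of $H$ at the single point $1-\mathfrak d/\mathfrak a$; you instead stay with the discrete sums, compare equal--length windows termwise, and use the integrated $H'$--bound, at the cost of a case split on whether $(\mathfrak f_N-2)\mathfrak d_N$ stays bounded. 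Your version is a bit more careful about floor effects and avoids the informal ``$\sim$'' inside a chain of inequalities, but the substance of the two arguments is the same.
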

\begin{proof}
We begin with the proof of \ref{hitprobgenenum}.
As already observed the function $R=R^{(N)}$ defined in \eqref{RKdef} is harmonic for the time-discrete birth-and-death chain associated with $Y_0$, hence
    \begin{equation}
        \P_{\lfloor \mathfrak a - \mathfrak d \rfloor} \left(T_0<T_{\mathfrak a}\right)= \frac{ R_{\mathfrak{a}}- R_{\mathfrak{a}-\mathfrak d}}{R_{\lfloor \mathfrak{a}\rfloor }}.
    \end{equation}
    Because of~\eqref{defU},~\eqref{link_HNH} and Lemma~\ref{lem_prop_UH}, and as $ \mathfrak d_N = o(N/m)$ we have for every $\mathfrak a_N - \mathfrak d_N \leq n \leq \mathfrak a_N$,
$$r_{n}\sim e^{-2\mathfrak u_N H(1)}.$$
    According to \cite[Lemma 4.6]{IGSW}, 
$$R_{\lfloor \mathfrak a_N\rfloor} \sim  \frac{\rho_N}{2m_N(1-\rho_N)}  = \frac { \mathfrak c_N }2. $$
This concludes the proof of \ref{hitprobgenenum}. \\
Now let $1 \ll \mathfrak d_N<\mathfrak a_N$, $ \mathfrak f_N>2$. We have
\begin{align*} 
\mathbf P_{\left\lfloor \mathfrak a-\mathfrak d\right\rfloor}(T_{\mathfrak a-\mathfrak f  \mathfrak d }< T_{\mathfrak a}) &= \frac{R_{\mathfrak{a}}- R_{\mathfrak{a}-\mathfrak d}}{R_{\mathfrak{a}}- R_{\mathfrak{a}-\mathfrak f  \mathfrak d }}\\
\nonumber& \sim \frac{\int_{1-\mathfrak d/\mathfrak a}^1e^{-2\mathfrak{u}_NH(z)}dz}{\int_{1-\mathfrak f  \mathfrak d/\mathfrak a}^1e^{-2\mathfrak{u}_NH(z)}dz}\\
\nonumber&  \leq  \frac{\int_{1- \mathfrak d/ \mathfrak a}^1e^{-2\mathfrak{u}_NH(z)}dz}{\int_{1-\mathfrak f  \mathfrak d/\mathfrak{a}}^{1-(\mathfrak f-1)  \mathfrak d/\mathfrak{a}}e^{-2\mathfrak{u}_NH(z)}dz}\\
\nonumber& \leq e^{-2\mathfrak{u}_N(H(1-\mathfrak d/\mathfrak a)-H(1-(\mathfrak f-1)\mathfrak d /\mathfrak a)}\\\nonumber
& \leq e^{-2(\mathfrak f-2) (\mathfrak d/\mathfrak a)\mathfrak{u}_N H'(1-\mathfrak d/\mathfrak a)},
\end{align*}
where we used that $H$ is non-decreasing and concave on $[0,1]$. From \eqref{der_H} we see that for any $y \in (0,1)$,
\begin{align*}
 H'(y)&\geq\frac{\rho}{2m(1-\rho)}\frac{2m}{\rho}  \frac{(1-\rho)(1-y)}{(1+2m/\rho)(1-(1-\rho)y)} \\
 &=   \frac{1-y}{(1+2m/\rho)(1-(1-\rho)y)} \geq   \frac{1-y}{1+2m/\rho} .
\end{align*}
We deduce that for $m$ small enough
\begin{align*} 
\mathbf P_{\left\lfloor \mathfrak a-\mathfrak d\right\rfloor}(T_{\mathfrak a-\mathfrak f  \mathfrak d }< T_{\mathfrak a}) & \leq e^{-2(\mathfrak f-2) \mathfrak d^2\mathfrak{u}_N /( (1+2m/\rho) \mathfrak a^2)}\leq e^{-(\mathfrak f-2) \mathfrak d^2\mathfrak{u}_N /\mathfrak a^2},
\end{align*}
which ends the proof of \ref{maj_proba_hittingenum}.
\end{proof}
\begin{remark}\label{altern} In a similar way as in the proof of Lemma~\ref{chalem} one can show, using~\eqref{RKdef} and Lemma~\ref{lem_prop_UH}, that for all $K >0$
\begin{equation}\label{hitK}
        \P_{\lfloor \mathfrak a_N - K\sigma_N \rfloor} \left(T_0<T_{\mathfrak a_N}\right)\sim \frac 1{e_N} \cdot c_N \cdot 2\sqrt{\pi} \int_0^Ke^{y^2} {\rm d}y \quad \mbox{as } N\to \infty.
    \end{equation} 
    The l.h.s. of~\eqref{hitK} is the inverse of the expected number of excursions of $Y_0^{(N)}$ from $\mathfrak a_N$ that reach $\lfloor \mathfrak a_N - K\sigma_N \rfloor$, and the last of the three factors on the r.h.s ~\eqref{hitK} is the expected time it takes the process $\mathcal H$ defined in~\eqref{OUSDE} to visit $-K$ and then travel back to 0, cf. \cite[Lemma 3]{grubel2005rarity}. In the light of the Ornstein-Uhlenbeck approximation that will be proved in Sec.~\ref{OUfluc}, the asymptotics \eqref{hitK} not only provides a factorisation of~$e_N$ that is similarly intuitive as~\eqref{eNnew}, but also opens a route for an application of~\cite[Theorem 1]{grubel2005rarity} in order to prove Theorem~\ref{thmY0}.\ref{thmY0:d}. As already mentioned in Remark~\ref{Rem2_5}, in the next subsections we take a different route that leads via quasi-equilibria. 
 \end{remark}
\subsection{Expected hitting times: Proof of Theorem~\ref{thmY0}.\ref{thmY0:a}}
\begin{lemma}\label{comingdown}
Under assumptions~\eqref{smallmut} and \eqref{expreg}, for $N$ large enough, 
\begin{equation}\label{comedown}
 \mathbf{E}_N[T_\mathfrak{a}]\leq  (1 \vee 4 \rho)\rho\, \mathfrak c_N  \log \mathfrak{a}.   
\end{equation}  \end{lemma}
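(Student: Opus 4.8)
The plan is to bound $\mathbf E_N[T_{\mathfrak a}]$ by a sum of expected one-step-downward sojourn times, exploiting that $Y_0$ started in $N$ has a strong downward drift above $\mathfrak a$, so it reaches $\lfloor\mathfrak a\rfloor$ quickly and essentially monotonically in a coarse sense. Concretely, for $n>\mathfrak a$ let $\tau_n$ be the time $Y_0$ needs to go from $n$ down to $n-1$ (first passage), and write $T_{\mathfrak a}\le \sum_{n=\lfloor\mathfrak a\rfloor+1}^{N}\tau_n$ when started at $N$; hence $\mathbf E_N[T_{\mathfrak a}]\le\sum_{n=\lfloor\mathfrak a\rfloor+1}^N \mathbf E_n[\tau_n]$, where $\mathbf E_n[\tau_n]$ is the expected time for the birth-death chain to first hit $n-1$ from $n$. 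The first step is therefore to get a clean closed form or sharp upper bound for $\mathbf E_n[\tau_n]$ for $n$ in the supercritical-for-decrease range $n\in(\mathfrak a,N]$.

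For the one-step first-passage time $\mathbf E_n[\tau_n]$ of a birth-death chain the standard formula is $\mathbf E_n[\tau_n]=\sum_{k\ge n}\frac{1}{\mu_k r_k}\,\big(\text{product of }r\text{'s}\big)$ type expression; more usefully, using the harmonic/scale function $R$ from \eqref{RKdef} and the speed measure, one has $\mathbf E_n[\tau_n]=\sum_{j\ge n}\frac{r_n}{\mu_j r_j}$ (the Green-function expression for the expected time to move from $n$ to $n-1$, absorbing is irrelevant here since the chain is confined above by $N$). I would then use the potential asymptotics: for $n>\mathfrak a$ the odds ratios $\mu_k/\lambda_k>1$, so $r_j/r_n=\exp(U(j)-U(n))$ decays geometrically in $j-n$ at a rate governed by $\log(\mu_n/\lambda_n)$. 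From \eqref{der_H} / the explicit form of $U$, for $n=\mathfrak a(1+x)$ with $x>0$ one has $\log(\mu_n/\lambda_n)\asymp (s_N-m_N)\,x/(1+\text{stuff})$, so summing the geometric series gives $\mathbf E_n[\tau_n]\lesssim \frac{1}{\mu_n}\cdot\frac{1}{1-\lambda_n/\mu_n}\asymp \frac{1}{n(s_N-m_N)}\cdot\frac{1}{x/(1+x)} = \frac{\mathfrak c_N}{n}\cdot\frac{1+x}{x}$. Since $n=\mathfrak a(1+x)$, this is $\asymp \frac{\mathfrak c_N}{\mathfrak a x}$ for moderate $x$, and $\asymp \frac{\mathfrak c_N}{n}$ for large $x$ (where $n$ ranges up to $N$).

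Summing over $n$ from $\lfloor\mathfrak a\rfloor+1$ to $N$: parametrizing $n-\mathfrak a=j$, $j=1,\dots,N-\lfloor\mathfrak a\rfloor$, the bound $\mathbf E_n[\tau_n]\lesssim \frac{\mathfrak c_N}{n}\cdot\frac{n}{n-\mathfrak a}=\frac{\mathfrak c_N}{n-\mathfrak a}$ (using $\lambda_n/\mu_n\le$ something uniformly bounded away from $1$ by a multiple of $(n-\mathfrak a)/n$) yields $\sum_{j=1}^{N-\mathfrak a}\frac{\mathfrak c_N}{j}\asymp \mathfrak c_N\log(N-\mathfrak a)\le \mathfrak c_N\log N$. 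To get the stated $\log\mathfrak a_N$ rather than $\log N$, and the explicit constant $(1\vee 4\rho)\rho$, I would be more careful with the ratio $\lambda_n/\mu_n$: writing it out, $\frac{\lambda_n}{\mu_n}=\frac{(1/2+s)(1-n/N)}{(1/2)(1-n/N)+m}$, and for $n$ bounded away from $N$ (say $n\le (1-\delta)N$) this is $\le 1-c(1-n/\mathfrak a)$ with $c\to s-m$ scaled appropriately, giving the factor $\rho$ from $\mathfrak c_N(s_N-m_N)=1$ and the extra $\rho$ and the $4$ from handling $n$ close to $N$ where the resampling term $(1-n/N)$ is small and one must argue that the chain passes through that near-$N$ zone in total time $O(\mathfrak c_N)$ (a finite geometric number of $O(1/N\cdot\text{rate})$ steps) — this boundary layer near $N$ is where the crude bound fails and needs a separate, slightly delicate estimate, contributing the $\vee\,4\rho$ correction.

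The main obstacle I anticipate is precisely this near-$N$ boundary layer: for $n$ within $O(1)$ or $o(N)$ of $N$, both $\lambda_n$ and the downward drift involve the vanishing factor $(1-n/N)$, so the naive geometric-series bound for $\mathbf E_n[\tau_n]$ degrades and individual steps can be slow; one must show that although each such step near $N$ may be slow, there are few of them and their cumulative expected duration is still $O(\mathfrak c_N)$ (or at worst $O(\mathfrak c_N\log\mathfrak a)$), so it does not dominate. I expect this is handled by comparing the chain in the zone $n\in[(1-\delta)N,N]$ with a chain having a uniform downward drift (since mutation rate $m$ keeps $\mu_n\ge mn\gtrsim mN$ while $\lambda_n\lesssim (s)(1-n/N)N$ is small there), from which the chain exits $[(1-\delta)N,N]$ in expected time $O(1/(mN))\cdot\log(\text{something})=O(\mathfrak c_N/(\mathfrak a_N)\cdot\dots)$, absorbed into the constant. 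Once the boundary layer is controlled, the bulk sum $\sum_{j}\mathfrak c_N/j$ over $j\lesssim \mathfrak a_N$ gives the $\mathfrak c_N\log\mathfrak a_N$ main term with the constant tracked through the explicit ratio $\lambda_n/\mu_n$ as above, completing the proof of \eqref{comedown}.
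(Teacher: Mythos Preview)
Your approach is correct and in fact considerably simpler than the paper's, though it yields a slightly different constant. The paper starts from the identity
\[
\mathbf E_N[T_{\mathfrak a}]=\sum_{n=\mathfrak a+1}^{N-1}\frac{R_n-R_{\mathfrak a}}{\lambda_n r_n}+\frac{R_N-R_{\mathfrak a}}{\mu_N r_{N-1}}
\]
(which is precisely what your decomposition $\sum_n\mathbf E_n[\tau_n]$ becomes after interchanging the order of summation), and then splits the range of $n$ into four zones, bounding each via integral approximations involving the potential function $H$ from \eqref{def_H} together with Lemma~4.6 of \cite{IGSW}. Your route --- bound each $\mathbf E_n[\tau_n]$ directly by $1/(\mu_n-\lambda_n)$ and sum --- bypasses all of that machinery. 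The inequality $\mathbf E_n[\tau_n]\le 1/(\mu_n-\lambda_n)$ is indeed valid here, but your geometric-series justification is slightly loose since $\mu_j$ is not monotone in $j$; the clean argument is backward induction on $n$, using that $\mu_n-\lambda_n=\tfrac{s}{N}\,n(n-\mathfrak a)$ is increasing for $n>\mathfrak a/2$ (equivalently, apply optional stopping to the compensated process $Y_0(t)-\int_0^t(\lambda_{Y_0}-\mu_{Y_0})\,du$). Once this is in hand,
\[
\mathbf E_N[T_{\mathfrak a}]\le\sum_{n=\mathfrak a+1}^N\frac{\mathfrak c\,\mathfrak a}{n(n-\mathfrak a)}=\mathfrak c\sum_{n=\mathfrak a+1}^N\Bigl(\frac{1}{n-\mathfrak a}-\frac{1}{n}\Bigr)\sim\mathfrak c\log\mathfrak a,
\]
and your anticipated boundary-layer difficulty near $N$ never materialises: the bound holds verbatim at $n=N$ (where $\lambda_N=0$, so the induction base $a_N=1/\mu_N=1/(\mu_N-\lambda_N)$ is exact). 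Your constant $1$ differs from the paper's $(1\vee 4\rho)\rho$ --- yours is sharper when $\rho>1/2$, the paper's when $\rho<1/2$ --- but only the order $O(\mathfrak c_N\log\mathfrak a_N)$ is ever used downstream, so this is immaterial.
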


\begin{proof}
 We are interested in the expectation of the first hitting time of $\mathfrak{a}$ when the initial state is~$N$. According to Equation (4.5) in \cite{IGSW}, if we denote by $T_j=T_j^{(N)}$ the first hitting time of $j \in \mathbb{N}$ we obtain
\begin{align*}
\mathbf E_N[T_\mathfrak{a}]= \mathbf E_N[T_0]- \mathbf E_\mathfrak{a}[T_0]
& = \sum_{n=1}^{N-1} \frac{R_{n\wedge N}-R_{n \wedge \mathfrak{a}}}{\lambda_n r_n}+ \frac{R_N-R_\mathfrak{a}}{\mu_N r_{N-1}}\\
& = \sum_{n=\mathfrak{a}+1}^{N-1} \frac{R_{n}-R_{\mathfrak{a}}}{\lambda_n r_n}+ \frac{R_N-R_\mathfrak{a}}{\mu_N r_{N-1}}.
\end{align*}
1) Let us first consider integers $n$ such that $n \leq C\mathfrak{a}$ with $C>2/\rho$. 
Following the proof of Lemma~4.6 in \cite{IGSW}, we obtain for such $n$,
\begin{align*}
\frac{R_{n}-R_{\mathfrak{a}}}{\lambda_n r_n} &\sim \frac{2}{n(1-n/N)}\int_\mathfrak{a}^n e^{-2{u}_N(H(y/\mathfrak{a})-H(n/\mathfrak{a})}dy\\
&= \frac{2N(1-\rho)}{n(1-n/N)}\int_1^{n/\mathfrak{a}} e^{-2{u}_N(H(z)-H(n/\mathfrak{a})}dz\\
&\leq \frac{2N(1-\rho)}{n(1-n/N)}\int_1^{n/\mathfrak{a}} e^{2{u}_N H'(z)(n/\mathfrak{a}-z)}dz\\
\end{align*}
where we have used that $H$ is concave (see \cite[p. 135]{IGSW} and Lemma \ref{lem_prop_UH}).
For $1 < z \leq C$, we obtain from \eqref{derHasym} that
$$ H'(z)\sim - \frac{z-1}{\rho (1-(1-\rho)z)}\leq - \frac{z-1}{\rho^2}. $$
We deduce that
\begin{align*}
\frac{R_{n}-R_{\mathfrak{a}}}{\lambda_n r_n} &\leq \frac{2N(1-\rho)}{n(1-n/N)}\int_1^{n/\mathfrak{a}} e^{2{u}_N (z-1)(n/\mathfrak{a}-z)/\rho^2}dz\\
 &= \frac{2N(1-\rho)}{n(1-n/N)} e^{-2{u}_N ((n/\mathfrak{a}-1)/2)^2/\rho^2} \int_{-(n/\mathfrak{a}-1)/2}^{(n/\mathfrak{a}-1)/2} e^{2{u}_N u^2/\rho^2}dz  \\
  &= \frac{2N(1-\rho)}{n(1-n/N)} e^{-2{u}_N ((n/\mathfrak{a}-1)/2)^2/\rho^2}\frac{\rho}{\sqrt{2{u}_N}}2 \int_0^{(\sqrt{2{u}_N}/\rho)(n/\mathfrak{a}-1)/2} e^{u^2}dz .
\end{align*}
Using that for any $A>0$, 
$$\int_0^A e^{x^2}dx \leq 2\int_{A/2}^A \leq \frac{2}{2(A/2)}\int_{A/2}^A 2xe^{x^2}dx \leq \frac{2}{A}e^{A^2}, $$
and that $1-n/N \sim 1$,
we get
\begin{align*}
\frac{R_{n}-R_{\mathfrak{a}}}{\lambda_n r_n} &\leq 4 \rho^2 \mathfrak c_N \frac{\mathfrak{a}}{n(n-\mathfrak{a})}.
\end{align*}
2) Let us now consider $n \in \mathbb{N}$ satisfying $2\mathfrak{a}/\rho \leq n \leq N(1-\sqrt{m})$. Then according to Lemma 4.6 in \cite{IGSW},
$$ R_n \sim \frac{\rho(1-n/N)\mathfrak c_N}{2(n/\mathfrak{a}-1)} e^{-2u_N H(n/\mathfrak{a})}.$$
This implies that
\begin{align*}
\frac{R_{n}-R_{\mathfrak{a}}}{\lambda_n r_n} \sim \frac{2}{n(1-n/N)}\frac{\rho(1-n/N)}{2 (n/\mathfrak{a}-1)}\mathfrak c_N= \rho \mathfrak c_N \frac{\mathfrak{a}}{n(n-\mathfrak{a})}
\end{align*}
3) Let us now consider $n \in \mathbb{N}$ satisfying $N(1-\sqrt{m})\leq n \leq N-1$. Then according to \eqref{der_H} (or Equation (4.37) in \cite{IGSW}), 
\begin{align*}
H'(n/\mathfrak{a})&= -\frac{\mathfrak c_N}{2} \log \Big( 1+ \frac{2m}{\rho} \frac{(1-\rho)(n/\mathfrak{a}-1)}{(1+2m/\rho)(1-n/N)}\Big)\\
&\sim  -\frac{\mathfrak c_N}{2} \log \Big( 1+ \frac{2m}{1-n/N}\Big)=-\frac{\mathfrak c_N}{2} \log \Big( 1+ \frac{2mN}{N-n}\Big)
\end{align*}
Hence
$$ |H'(n/\mathfrak{a})| \geq \frac{\mathfrak c_N}{2} \log \Big( 1+ \frac{2mN}{N\sqrt{m}}\Big) \sim \sqrt{m}\mathfrak c_N. $$
In particular, 
\begin{align*}
\frac{R_{n}-R_{\mathfrak{a}}}{r_n} &\leq N(1-\rho)\int_1^{n/\mathfrak{a}} e^{2u_N(n/\mathfrak{a}-z)/(\sqrt{m}(1-\rho))}dz\leq \frac{1}{2\sqrt{m}}
\end{align*}
and
\begin{align*}
\frac{R_{n}-R_{\mathfrak{a}}}{\lambda_nr_n} &\leq  \frac{1}{\sqrt{m}(N-n)}.
\end{align*}
4) Finally, for $n=N$, as the $r_k$'s are increasing with $k\geq \mathfrak{a}$, 
$$ \frac{R_N-R_\mathfrak{a}}{\mu_N r_{N-1}}\leq \frac{(N-\mathfrak{a})r_{N-1}}{mN r_{N-1}} \leq \frac{1}{m}.$$
Combining points 1) to 4) we may now conclude the proof:
\begin{align*}
\mathbf{E}_N[T_\mathfrak{a}]& = \sum_{n=\mathfrak{a}+1}^{N-1} \frac{R_{n}-R_{\mathfrak{a}}}{\lambda_n r_n}+ \frac{R_N-R_\mathfrak{a}}{\mu_N r_{N-1}}\\
& = \sum_{n=\mathfrak{a}+1}^{3 \mathfrak{a}/\rho} \frac{R_{n}-R_{\mathfrak{a}}}{\lambda_n r_n}+\sum_{n=3\mathfrak{a}/\rho+1}^{N(1-\sqrt{m})} \frac{R_{n}-R_{\mathfrak{a}}}{\lambda_n r_n}+\sum_{n=N(1-\sqrt{m})+1}^{N-1} \frac{R_{n}-R_{\mathfrak{a}}}{\lambda_n r_n}+ \frac{R_N-R_\mathfrak{a}}{\mu_N r_{N-1}}\\
\\
& \leq \sum_{n=\mathfrak{a}+1}^{N(1-\sqrt{m})}  \rho \mathfrak c_N(1 \vee 4 \rho) \frac{\mathfrak{a}}{n(n-\mathfrak{a})} +\sum_{n=N(1-\sqrt{m})+1}^{N-1} \frac{1}{\sqrt{m}N}+ \frac{1}{m}\\
& \leq  \rho \mathfrak c_N(1 \vee 4 \rho) \sum_{n=\mathfrak{a}+1}^{N(1-\sqrt{m})}\Big(\frac{1}{n-\mathfrak{a}}-\frac{1}{n}\Big) +1+ \frac{1}{m}\sim  (1 \vee 4 \rho) \rho \mathfrak c_N\log \mathfrak{a}.
\end{align*}
This ends the proof.
\end{proof}
 We will now consider a modification $Y_*^{(N)}$ of $Y_0^{(N)}$ which has the same dynamics as $Y_0^{(N)}$ except in state $0$, where, instead of being absorbed, it is ``softly reflected''. The equilibrium distribution of $Y_*^{(N)}$ will then serve  as a proxy for the quasi-equilibrium distribution of $Y_0^{(N)}$ as $N\to \infty$.
\normalcolor
\begin{definition}\label{Ystar}Let $Y_*=Y_*^{(N)}$ be the
 $\{0,1,\ldots,N\}$-valued Markov chain  whose jump rates $\lambda_n^*$ and $\mu_n^*$ are the same as the jump rates $\lambda_n$  and $\mu_n$ of $Y_0^{(N)}$ except in the  state $0$, where we put $\lambda^*_0  =1$ and $\mu^*_0  =0$.  We write $\pi_*^{(N)}(n)$ for the weight of the equilibrium distribution of $Y_*^{(N)}$ in $n$.
\end{definition}
\begin{lemma}\label{lemtimetoa}
    With  $T_\mathfrak a^*$ defined as  the first time when  $Y_*^{(N)}$ visits  $\lfloor \mathfrak a_N \rfloor$, under assumptions~\eqref{smallmut} and \eqref{expreg} one has
    \begin{equation}\label{timetoa}
       \E_0[T_\mathfrak a^*]:= \E[T_\mathfrak a^*\mid Y_*^{(N)}(0) =0 ]= O(\mathfrak c \log \mathfrak a).  
    \end{equation}
\end{lemma}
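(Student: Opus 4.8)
The plan is to reduce the computation of $\E_0[T_\mathfrak a^*]$ to an explicit sum over the oddsratio products, in complete analogy with the first‑passage formula used for $Y_0$ in the proof of Lemma~\ref{comingdown}. Since $Y_*^{(N)}$ coincides with $Y_0^{(N)}$ on $\{1,\dots,N\}$ and differs only in having $\lambda_0^*=1$, $\mu_0^*=0$ (so that $0$ is reflecting rather than absorbing), the standard birth‑and‑death first‑passage identity — the same elementary computation that yields eq.~(4.5) in~\cite{IGSW} — gives, with $r$, $R$ as in~\eqref{def_ri}, \eqref{RKdef},
\begin{equation}\label{planformula}
\E_0[T_\mathfrak a^*]=R_{\lfloor\mathfrak a\rfloor}+\sum_{n=1}^{\lfloor\mathfrak a\rfloor-1}\frac{R_{\lfloor\mathfrak a\rfloor}-R_n}{\lambda_n r_n}.
\end{equation}
Alternatively, \eqref{planformula} can be read off from a renewal decomposition: started at $0$, the process $Y_*^{(N)}$ waits an $\mathrm{Exp}(1)$ time and then performs an excursion into $\{1,2,\dots\}$ that reaches $\lfloor\mathfrak a\rfloor$ with probability $p:=\P_1(T_\mathfrak a<T_0)=1/R_{\lfloor\mathfrak a\rfloor}$ (by harmonicity of $R$) and otherwise returns to $0$; the number of excursions is geometric with mean $1/p$, and conditioning on success resp.\ failure identifies the per‑excursion duration with $\E_1[T_\mathfrak a\mid T_\mathfrak a<T_0]$ resp.\ $\E_1[T_0\mid T_0<T_\mathfrak a]$.

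By~\cite[Lemma~4.6]{IGSW} (already invoked in the proof of Lemma~\ref{chalem}) one has $R_{\lfloor\mathfrak a\rfloor}\sim\mathfrak c/2$, so the first term of~\eqref{planformula} is $O(\mathfrak c)$, and it remains to show $\Sigma_N:=\sum_{n=1}^{\lfloor\mathfrak a\rfloor-1}\frac{R_{\lfloor\mathfrak a\rfloor}-R_n}{\lambda_n r_n}=O(\mathfrak c\log\mathfrak a)$. This is the mirror image of the estimate of $\sum_{n>\mathfrak a}\frac{R_n-R_\mathfrak a}{\lambda_n r_n}$ carried out in the proof of Lemma~\ref{comingdown}: here $n$ runs \emph{below} $\mathfrak a$, where by~\eqref{link_HNH} we have $r_n=\exp(-2\mathfrak u\,H(n/\mathfrak a))$ with $y\mapsto H(y)=H((m_N,\rho_N),y)$ \emph{increasing} and concave on $[0,1]$ (Lemma~\ref{lem_prop_UH}), so that $R_{\lfloor\mathfrak a\rfloor}-R_n=\sum_{i=n}^{\lfloor\mathfrak a\rfloor-1}r_i$ is dominated by its first term $r_n$. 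I would split the sum at $n=\lfloor\mathfrak a\rfloor-\lfloor\sigma\rfloor$.

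For $1\le n\le\lfloor\mathfrak a\rfloor-\lfloor\sigma\rfloor$, a Watson/Laplace estimate at the lower endpoint — exactly as in steps~1)--2) of the proof of Lemma~\ref{comingdown}, comparing $H$ on $[n/\mathfrak a,1]$ with the chord through its endpoints and using the asymptotics~\eqref{derHasym} for $H'$ — yields $\frac{R_{\lfloor\mathfrak a\rfloor}-R_n}{r_n}\lesssim\frac{\mathfrak a}{\mathfrak u\,H'(n/\mathfrak a)}$; since $\lambda_n\,H'(n/\mathfrak a)\asymp n(1-n/\mathfrak a)/\rho$ and $\mathfrak u\,\mathfrak c=\rho\mathfrak a$, this gives $\frac{R_{\lfloor\mathfrak a\rfloor}-R_n}{\lambda_n r_n}\lesssim\frac{\mathfrak a\,\mathfrak c}{n(\mathfrak a-n)}$, and $\sum_n\frac{\mathfrak a}{n(\mathfrak a-n)}=\sum_n\big(\tfrac1n+\tfrac1{\mathfrak a-n}\big)=O(\log\mathfrak a)$, contributing $O(\mathfrak c\log\mathfrak a)$. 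For $\lfloor\mathfrak a\rfloor-\lfloor\sigma\rfloor<n\le\lfloor\mathfrak a\rfloor-1$ the potential is essentially flat and I would use instead the quadratic approximation~\eqref{HyH1} of $H$ near its maximum: it gives $r_n\gtrsim e^{-2\mathfrak u H(1)}$ and $R_{\lfloor\mathfrak a\rfloor}-R_n\lesssim e^{-2\mathfrak u H(1)}(\mathfrak a-n)$, hence $\frac{R_{\lfloor\mathfrak a\rfloor}-R_n}{\lambda_n r_n}\lesssim(\mathfrak a-n)/\lambda_n\lesssim(\mathfrak a-n)/\mathfrak a$; summing over the at most $\sigma$ such $n$ gives $O(\sigma^2/\mathfrak a)=O(\rho\mathfrak c)=O(\mathfrak c)$. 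Combining the two ranges yields $\Sigma_N=O(\mathfrak c\log\mathfrak a)$, and with~\eqref{planformula} also $R_{\lfloor\mathfrak a\rfloor}=O(\mathfrak c)$, we conclude $\E_0[T_\mathfrak a^*]=O(\mathfrak c\log\mathfrak a)$.

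The main obstacle is the second of these two ranges: the naive bound $(R_{\lfloor\mathfrak a\rfloor}-R_n)/r_n\lesssim\mathfrak c$, which is enough for $n$ away from $\mathfrak a$, fails for $n$ within $O(\sigma)$ of $\mathfrak a$, and one must instead exploit the Gaussian ($\sigma$‑)scale behaviour of $H$ near $y=1$ together with the sharp asymptotics of $R_n$ from~\cite[Lemma~4.6]{IGSW}. Everything else is a transcription of the already established Lemma~\ref{comingdown}, with the region $\{n>\mathfrak a\}$ replaced by $\{n<\mathfrak a\}$ and no analogue of the boundary terms near $N$.
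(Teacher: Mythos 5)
Your proposal is correct and takes essentially the same route as the paper. You start from the identical first-passage identity (your \eqref{planformula} is the paper's \eqref{reach_time} with $\sum_{k=n}^{\mathfrak a-1} r_k$ rewritten as $R_{\lfloor\mathfrak a\rfloor}-R_n$), split the sum at $\lfloor\mathfrak a\rfloor-\lfloor\sigma\rfloor$ exactly as the paper does, bound the near-$\mathfrak a$ block by $\sigma^2/(\rho\mathfrak a)=O(\mathfrak c)$ via the flatness of the potential on the $\sigma$-scale (the paper just bounds the exponential by $1$; your invocation of \eqref{HyH1} serves the same purpose), and bound the far block using concavity of $H$ and a Laplace estimate to get $\lesssim \mathfrak c\,\mathfrak a/(n(\mathfrak a-n))$, whose sum is $O(\mathfrak c\log\mathfrak a)$. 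The only cosmetic difference is that you invoke the chord through the endpoints whereas the paper uses the tangent at the varying point $y$ and lands on a Dawson-function estimate; both give the same order $\mathfrak a^2/(\mathfrak u(\mathfrak a-n))$ for $(R_{\lfloor\mathfrak a\rfloor}-R_n)/r_n$, since for this $H$ the chord slope on $[n/\mathfrak a,1]$ is comparable to $H'(n/\mathfrak a)$.
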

\begin{proof} 
Proceeding similarly as in~\cite[Section 4]{IGSW} we obtain (omitting the floor brackets around~$\mathfrak a$) with $r_k$ as in~\eqref{def_ri}
\begin{align}\label{reach_time} \nonumber \E_0[T_\mathfrak a^*]- \sum_{k=0}^{ \mathfrak a -1} r_k &= \sum _{n=1}^{\mathfrak{a}-1}\frac{1}{\lambda_n} \sum_{k=n}^{\mathfrak{a}-1}\frac{r_k}{r_n}\\
\nonumber &=\frac{2}{1/2+m/\rho} \sum _{n=1}^{\mathfrak{a}-1}\frac{1}{n(1-(1-\rho)(n/\mathfrak{a}))} \sum_{k=n}^{\mathfrak{a}-1}e^{-2u_N (H(k/\mathfrak{a})-H(n/\mathfrak{a}))} \\
& \sim 4\sum _{n=1}^{\mathfrak{a}-1}\frac{1}{n(1-(1-\rho)(n/\mathfrak{a}))}\int_n^{\mathfrak{a}-1 } e^{-2u_N (H(y/\mathfrak{a})-H(n/\mathfrak{a}))}dy. \end{align}
We will decompose the last sum in \eqref{reach_time} into two parts. First notice that if $\mathfrak a-\sigma \leq n \leq y \leq \mathfrak a$, 
$$ \frac{1}{n(1-(1-\rho)(n/\mathfrak{a}))} \sim \frac{1}{\rho \mathfrak a} .$$
Hence, as $H$ is non-decreasing from $0$ to $1$, 
\begin{eqnarray}   &&\sum _{n=\mathfrak a - \sigma}^{\mathfrak{a}-1}\frac{1}{n(1-(1-\rho)(n/\mathfrak{a}))}\int_n^{\mathfrak{a}-1 } e^{-2u_N (H(y/\mathfrak{a})-H(n/\mathfrak{a}))}dy \nonumber\\&\leq& \frac{1}{\rho \mathfrak a} \sum _{n=\mathfrak a - \sigma}^{\mathfrak{a}-1} (\mathfrak a -n) \nonumber \\
&=& \frac{1}{\rho \mathfrak a}\frac{\sigma(\sigma +1)}{2} \nonumber \\
& \sim & \frac{1}{\rho \mathfrak a}\frac{\rho^2 N}{2m} = \frac{\rho\mathfrak c}{2}. \label{reach1} \end{eqnarray} 
Recall that $H''\leq 0$. In particular, this implies that for any $n \leq y \leq  \mathfrak{a}-1$,
$$ H(y/\mathfrak{a})\geq H(n/\mathfrak{a}) + \frac{y-n}{\mathfrak{a}}H'(y/{\mathfrak{a}}) .$$
and according to \eqref{derHasym}
$$ 2u_N (H(y/\mathfrak{a})-H(n/\mathfrak{a}))\geq 2u_N  \frac{y-n}{\mathfrak{a}}H'(y/{\mathfrak{a}})\sim 2u_N  \frac{y-n}{\mathfrak{a}} \frac{\mathfrak a-y}{\rho \mathfrak a}$$
 We deduce that
 \begin{align*}   \int_n^{\mathfrak{a}} e^{-2u_N (H(y/\mathfrak{a})-H(n/\mathfrak{a}))}dy&\leq  \int_n^{\mathfrak{a}} e^{- \frac{2u_N}{\rho \mathfrak a^2} (y-n)(\mathfrak a-y)}dy\\
& =  \int_{-(\mathfrak{a}-n)/2}^{(\mathfrak{a}-n)/2} e^{- \frac{2u_N}{\rho \mathfrak a^2}((\mathfrak{a}-n)^2/4-z^2)}dz\\
&= \mathfrak a \sqrt{\frac{2\rho}{u_N}}  e^{- \frac{2u_N}{\rho \mathfrak a^2}(\mathfrak{a}-n)^2/4}\int_{0}^{\sqrt{2u_N/\rho \mathfrak a^2} (\mathfrak{a}-n)/2} e^{x^2}dx. \end{align*}
Let us introduce the Dawson function, for $x \geq 0$ 
$$ F(x):= e^{-x^2}\int_0^x e^{t^2}dt.$$
Then it is known  that $F(x) \sim 1/2x$ for large $x$ (see for instance \cite[(1), (9)]{wolframDwason}). 
We have just proven that
 \begin{equation*}   \int_n^{\mathfrak{a}} e^{-2u_N (H(y/\mathfrak{a})-H(n/\mathfrak{a}))}dy \leq 
 \mathfrak a \sqrt{\frac{2\rho}{u_N}} F\left( \frac{\mathfrak a-n}{\mathfrak a}\sqrt{\frac{u_N}{2\rho}} \right). \end{equation*} 
 But $\mathfrak a -n \geq \sigma $ implies
 $$  \frac{\mathfrak a-n}{\mathfrak a}\sqrt{\frac{u_N}{2\rho}}\geq  \frac{\sigma}{\mathfrak a}\sqrt{\frac{u_N}{2\rho}} = \frac{\rho \sqrt{N}}{\sqrt{m}N(1-\rho)} \sqrt{\frac{mN(1-\rho)^2}{2\rho}} = \sqrt{\frac{\rho}{2}}. $$
As this last term is lower bounded by $\sqrt{\rho^\ast}/2$ for $N$ large enough, we deduce that there exists a positive constant $C$ such that for $N$ large enough and $\mathfrak a -n \geq \sigma $, 
$$ F\Big( \frac{\mathfrak a-n}{\mathfrak a}\sqrt{\frac{u_N}{2\rho}} \Big) \leq \frac{C}{2} \frac{\mathfrak a}{(\mathfrak a-n)\sqrt{u_N}} $$
and thus
$$   \int_n^{\mathfrak{a}} e^{-2u_N (H(y/\mathfrak{a})-H(n/\mathfrak{a}))}dy \leq C \frac{\mathfrak a^2}{(\mathfrak a-n){u_N}}. $$
We thus obtain
\begin{eqnarray*}
    &&\sum _{n=1}^{\mathfrak{a}-\sigma}\frac{1}{n(1-(1-\rho)(n/\mathfrak{a}))}\int_n^{\mathfrak{a}-1 } e^{-2u_N (H(y/\mathfrak{a})-H(n/\mathfrak{a}))}dy \\&\leq&\frac{C\mathfrak a}{\rho{u_N}}   \sum _{n=1}^{\mathfrak{a}-\sigma}\frac{\mathfrak{a}}{n(\mathfrak{a}-n)}\\
    &=& \frac{C\mathfrak c}{\rho^2}  \sum _{n=1}^{\mathfrak{a}-\sigma}\Big(\frac{1}{n}+ \frac{1}{\mathfrak{a}-n}\Big) \sim \frac{2C\mathfrak c}{\rho^2} \log \mathfrak{a}.
\end{eqnarray*}
Combining this with \eqref{reach1}, we see that the r.h.s. of~\eqref{reach_time} is $O(\mathfrak c \log \mathfrak a)$. This readily imples the assertion of the lemma, since $\sum_{k=0}^{ \mathfrak a -1} r_k=O(\mathfrak c)$ (see  \cite[Lemma 4.6]{IGSW}).
\end{proof}
Next we observe that
\begin{equation}\label{concatex}
   \E[T_{\mathfrak a}\, |\,  Y_0^{(N)}(0) = 1\mbox{ and } T_0 > T_{\mathfrak a}] \le \E[T_\mathfrak a^*\mid Y_*^{(N)}(0) =0 ]. 
\end{equation}
Indeed, decomposing $(Y_*(t\wedge T^*_{\mathfrak a}))_{t\ge 0}$ into its excursions from $0$ that remain below $\lfloor \mathfrak a \rfloor$,  and the  piece that  goes from $0$ to $\lfloor \mathfrak a\rfloor$ without ever returning to $0$, we see that the latter, when observed from the time at which it jumps from $0$ to $1$, has (up to a time shift) the same distribution as $(Y(t\wedge T_{\mathfrak a}))_{t\ge 0}$ under $\P_1(\cdot \mid T_{\mathfrak a} < T_0)$. This proves the inequality~\eqref{concatex}.  
Combining~\eqref{concatex} with Lemma~\ref{lemtimetoa} we see (always
under assumptions~\eqref{smallmut} and~\eqref{expreg}) that
\begin{equation}\label{dursweep}
\E[T_{\mathfrak a} \, | \, Y_0^{(N)}(0) = 1, T_0 > T_{\mathfrak a}] = O(\mathfrak c \log \mathfrak a).    
\end{equation} 

\noindent To conclude the proof of Theorem~\ref{thmY0}.\ref{thmY0:a}, because of~\eqref{comedown} and~\eqref{dursweep}  it only remains to show
\begin{equation}\label{durlasttravel}
\E\left[T_0  \,\mid \, Y_0^{(N)}(0) = \lfloor\mathfrak a\rfloor - 1, \, T_0 < T_\mathfrak a\right]=O(\mathfrak c\log \mathfrak a).   
\end{equation}
This, however, is a direct consequence of~\eqref{dursweep} combined with the next lemma. 
\begin{lemma}\label{timereveralllemma}
    The number of steps of the last excursion of $Y_0$ from $\lfloor \mathfrak a \rfloor$ has (when decreased by $1$) the same distribution as the number of steps it takes $Y_0$ to reach $\lfloor \mathfrak a \rfloor$ when started in $1$ and being conditioned on the event  $\{T_{\mathfrak a}< T_0\}$.
\end{lemma}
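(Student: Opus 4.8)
The plan is to prove the lemma as a path-reversal identity for the time-discrete birth-and-death chain $X=(X_k)$ associated with $Y_0$ (at a state $n\in\{1,\dots,N-1\}$ this chain jumps up with probability $p_n:=\lambda_n/(\lambda_n+\mu_n)$ and down with probability $q_n:=\mu_n/(\lambda_n+\mu_n)$, while $0$ is absorbing), since the number of jumps of an excursion, the number of jumps needed to reach a level, and the event $\{T_{\mathfrak a}<T_0\}$ are all functionals of $X$. Write $\mathfrak a:=\lfloor\mathfrak a_N\rfloor$ and attach to a nearest-neighbour path $\gamma=(\gamma_0,\dots,\gamma_M)$ the weight $w(\gamma):=\prod_{i=1}^M P(\gamma_{i-1},\gamma_i)$, $P$ being the transition matrix of $X$. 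First I identify the law of the last excursion of $Y_0$ from $\mathfrak a$ by a last-exit decomposition. Call a $\gamma$ with $\gamma_0=\mathfrak a$, $\gamma_M=0$ and $\gamma_1,\dots,\gamma_{M-1}\in\{1,\dots,\mathfrak a-1\}$ a \emph{last-excursion path}; necessarily $\gamma_1=\mathfrak a-1$ and $\gamma_{M-1}=1$. For each fixed $k$ the Markov property at time $k$ gives $\P\bigl((X_{k+j})_{j=0}^M=\gamma\bigr)=\P(X_k=\mathfrak a)\,w(\gamma)$, and these events are pairwise disjoint in $k$: once $\gamma$ has been followed, $X$ rests at the absorbing state $0$ and in particular never revisits $\mathfrak a$, so $k$ is forced to be the time of the last visit of $X$ to $\mathfrak a$. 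Summing over $k$,
\[
\P(\text{last excursion}=\gamma)=\Bigl(\,\sum_{k\ge 0}\P(X_k=\mathfrak a)\,\Bigr)\,w(\gamma),
\]
and as the left-hand side sums to $1$ over all last-excursion paths ($\mathfrak a$ being visited almost surely from the relevant starting states of $Y_0$), the last excursion is a last-excursion path chosen with probability proportional to $w$ — a law that, as the formula shows, does not depend on the starting state. The same, easier, computation shows that under $\P_1(\,\cdot\mid T_{\mathfrak a}<T_0)$ the chain traces a path from $1$ to $\mathfrak a$ staying in $\{1,\dots,\mathfrak a\}$, again chosen with probability proportional to $w$.

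The second step is the reversal. To a last-excursion path $\gamma=(\mathfrak a,\mathfrak a-1,\dots,1,0)$ with $M$ jumps I associate
\[
\delta:=(\gamma_{M-1},\gamma_{M-2},\dots,\gamma_1,\gamma_0)=(1,\dots,\mathfrak a-1,\mathfrak a),
\]
obtained by deleting the absorbing endpoint $0$ of $\gamma$ and reversing what remains; this is a bijection onto the paths from $1$ to $\mathfrak a$ that avoid $0$ and have $M-1$ jumps. For the weights one computes
\[
\frac{w(\gamma)}{w(\delta)}=P(1,0)\,\prod_{i=1}^{M-1}\frac{P(\gamma_{i-1},\gamma_i)}{P(\gamma_i,\gamma_{i-1})}=q_1\,\frac{\nu(1)}{\nu(\mathfrak a)},
\]
where $\nu$ is a reversing measure of the birth-and-death chain $X$ on $\{1,\dots,\mathfrak a\}$ (so $\nu(n+1)/\nu(n)=p_n/q_{n+1}$, a quantity expressible through the $r_\ell$ of~\eqref{def_ri} and the total rates $\lambda_n+\mu_n$): the vertices $\gamma_0,\dots,\gamma_{M-1}$ all lie in $\{1,\dots,\mathfrak a\}$ because $\gamma$ never revisits $\mathfrak a$, so detailed balance makes the product telescope to $\nu(\gamma_{M-1})/\nu(\gamma_0)=\nu(1)/\nu(\mathfrak a)$, while the single edge of $\gamma$ lying outside this reversible range — its final jump $1\to 0$ — has no counterpart in $\delta$ and contributes the factor $P(1,0)=q_1$. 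The point is that this ratio is the same for every $\gamma$ and, in particular, is independent of $M$.

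Summing the previous display over all last-excursion paths with exactly $M$ jumps and dividing through by the (cancelling) normalising constants, one obtains for each $M\ge 1$
\[
\P(\text{last excursion has }M\text{ jumps})=\P_1\bigl(Y_0\text{ reaches }\mathfrak a\text{ in }M-1\text{ jumps}\ \big|\ T_{\mathfrak a}<T_0\bigr),
\]
which is the assertion. I expect the only genuinely delicate points to be: (i) the rigour of the last-exit decomposition, since the time of the last visit to $\mathfrak a$ is not a stopping time, so one argues via fixed times $k$ together with the disjointness noted above; and (ii) the boundary bookkeeping in the weight ratio, i.e.\ verifying that a last-excursion path indeed stays in $\{1,\dots,\mathfrak a\}$ except for its final jump, so that detailed balance applies to all but one of its edges and the residual factor $q_1\nu(1)/\nu(\mathfrak a)$ is path-independent. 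No estimates are involved — the lemma is an exact identity — and the same reversal in fact matches the entire conditioned trajectories (up to time reversal and deletion of one boundary vertex), which is the form in which the lemma feeds into the proof of~\eqref{durlasttravel}.
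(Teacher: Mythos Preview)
Your proof is correct. Both it and the paper's rest on the reversibility of the embedded birth--death chain, but the packaging differs. The paper constructs an auxiliary chain $V$ on $\{0,1,\ldots,\lfloor\mathfrak a\rfloor,\lfloor\mathfrak a\rfloor+1\}$ with reflecting boundaries (so $V$ is positive recurrent and reversible) and then invokes \cite[Proposition~4]{barrera2009abrupt} to conclude that the length of the first successful excursion of $V$ from $\lfloor\mathfrak a\rfloor$ to $0$ equals in law that of the first successful excursion from $0$ to $\lfloor\mathfrak a\rfloor$; a short translation back to $Y_0$ recovers the lemma. You instead work directly with the absorbing chain: a last-exit decomposition identifies the law of the last excursion as $w$-proportional, and an explicit bijection (delete the terminal $0$, reverse) carries this onto the $w$-proportional law on paths from $1$ to $\mathfrak a$, the detailed-balance weight ratio $q_1\,\nu(1)/\nu(\mathfrak a)$ being path-independent. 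Your route is more elementary and self-contained, and in fact yields the stronger statement that the entire conditioned trajectories match under reversal; the paper's is shorter on the page but outsources the reversal to the literature and requires the auxiliary reflecting construction.
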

\begin{proof}We consider the discrete-time Markov chain $V$ on $\{0,1,\ldots, \lfloor a \rfloor, \lfloor a \rfloor +1\} $ with transition probabilities 
$$P(n,n+1):=\frac {\lambda_n}{\lambda_n+\mu_n} \quad \text{and} \quad P(n,n-1):= 1-P(n,n+1) \quad \text{for} \quad n=1,\ldots, \lfloor \mathfrak a \rfloor,$$
and 
$$P(0,1) = P(\lfloor \mathfrak a \rfloor +1, \lfloor \mathfrak a \rfloor ):= 1.$$
The chain $V$ has a reversible equilibrium distribution which assigns non-zero weights to both $0$ and~$\lfloor a \rfloor$. Let $V_0$ (resp. $V_{\lfloor \mathfrak a \rfloor}$) be the Markov chain with transition probability $P$ starting in $0$ (resp. in~$\lfloor \mathfrak a \rfloor$).  Say that an excursion of $V_{\lfloor \mathfrak a \rfloor}$ from $\lfloor \mathfrak a \rfloor$ is successful if it ever reaches $0$, and that an excursion of $V_0$ from $0$ is successful if it ever reaches $\lfloor \mathfrak a \rfloor$. 
Let  $\widetilde T_{\lfloor \mathfrak a \rfloor \to 0}$ be the random number of steps which the first successful excursion of $V_{\lfloor \mathfrak a \rfloor}$ from $\lfloor \mathfrak a \rfloor$ needs to reach $0$, and let    $\widetilde T_{0\to \lfloor \mathfrak a \rfloor}$ be the number of steps which the first successful excursion of $V_0$ needs to reach $\lfloor \mathfrak a \rfloor$. 
   We conclude from  \cite[Proposition 4]{barrera2009abrupt} that $\widetilde T_{\lfloor \mathfrak a \rfloor \to 0}$ has the same distribution as $\widetilde T_{0\to \lfloor \mathfrak a \rfloor}$.  The claim follows by noting that $\widetilde T_{\lfloor \mathfrak a \rfloor \to 0}$ has the same distribution as the number of steps of the last excursion of $Y_0$ from $\lfloor \mathfrak a \rfloor$, and that  $\widetilde T_{0\to \lfloor \mathfrak a \rfloor}-1$ has the same distribution as the number of steps it takes $Y_0$ to reach $\lfloor \mathfrak a \rfloor$ when started in $1$ and being conditioned on the event  $\{T_{\mathfrak a}< T_0\}$.  
\end{proof}
\subsection{Ornstein-Uhlenbeck fluctuations: Proof of Theorem~\ref{thmY0}.\ref{thmY0:b}}\label{OUfluc} 
 We will prove the ``convergence of generators'' of the processes $\mathcal H_N$ (defined in~\eqref{OUconvdef}) to the generator of the Ornstein Uhlenbeck processes $\mathcal H$, and check a compact containment condition. A reference in which results contained in~\cite{ethier2009markov} are nicely refined in a framework suitable for our purpose is~\cite[Proposition 2.5]{BW2025}.  In view of the definitions~\eqref{def_a}, ~\eqref{defb} and~\eqref{defh}, the process $\mathcal H_N$  jumps from $x$ to $x+\frac 1{\sigma_N}=x+\sqrt{\tfrac s{\rho N}}$ at rate
$$ \beta_N(x)= \frac{\rho}{m(1-\rho)}\Big(\sqrt{\tfrac {\rho N }{ s}}x +N(1-\rho) \Big)\Big( \frac{1}{2}+ \frac{m}{\rho} \Big)\Big( \rho- \frac{x\sqrt  \rho}{\sqrt{sN}} \Big) $$ 
and from $x$ to $x-\sqrt{\tfrac s{\rho N}}$ at rate
$$ \delta_N(x)= \frac{\rho}{m(1-\rho)}\Big( \sqrt{\tfrac {\rho N} { s }}x +N(1-\rho) \Big)\Big( \frac{1}{2}\Big( \rho- \frac{x{\sqrt  \rho}}{\sqrt{s N}} \Big)+ m \Big) .$$
In particular, 
\begin{equation} \label{beta-delta} \beta_N(x)-\delta_N(x) = -\frac{1}{1-\rho}\Big( \sqrt{\tfrac {\rho N} { s }} x +N(1-\rho) \Big) \frac{x\sqrt \rho}{\sqrt{sN}}  \end{equation}
and 
\begin{equation} \label{beta+delta}\beta_N(x)+\delta_N(x) = \frac{\rho}{m(1-\rho)}\Big( \sqrt{\tfrac {{\rho}N} { s}}x +N(1-\rho) \Big)\Big(\rho + 2m - \frac{x{\sqrt \rho}}{\sqrt{sN}}\Big(1+ \frac{m}{\rho}\Big) \Big). \end{equation}
Applied to a function $f$ with three bounded continuous derivatives, the generator of $\mathcal H_N$ thus takes the form
\begin{align*} G_N f(x)&=\beta_N(x)\Big( f\Big(x+\sqrt{\tfrac s{{\rho}N}}\Big)-f(x) \Big)+\delta_N(x)\Big( f\Big(x-\sqrt{\tfrac s{{\rho}N}}\Big)-f(x) \Big) \\
&= \sqrt{\frac{s}{{ \rho} N}}(\beta_N(x)-\delta_N(x))f'(x)+\frac{s}{2N{ \rho}}(\beta_N(x)+\delta_N(x))f''(x)+O\Big( \frac{s}{N} \Big)^{3/2}.\end{align*}
Using \eqref{beta-delta} one obtains
$$  \sqrt{\tfrac s{{\rho}N}}(\beta_N(x)-\delta_N(x))= - \Big( 1+ \frac{\rho}{\sqrt{ \mathfrak u_N  }}x \Big)x, $$
and \eqref{beta+delta} yields
\begin{equation}\label{quadvar}
    \frac{s}{2N{\rho}}(\beta_N(x)+\delta_N(x))= \frac{1}{2{\rho}}\Big( 1+ \frac{\rho}{\sqrt{u_N}}x \Big)\Big(\rho + 2m - \frac{x\rho}{\sqrt{mN}}\Big(1+ \frac{m}{\rho}\Big) \Big). 
\end{equation}
We conclude that
$$  G_N f(x)=\Big( 1+  \frac{\rho}{\sqrt{u_N}}x \Big)\Big[-xf'(x)+\frac{1}{2}\Big(1 + \frac{2m}{\rho} - \frac{x}{\sqrt{mN}}\Big(1+ \frac{m}{\rho}\Big) \Big)f''(x)\Big]+ O\Big( \frac{m}{N} \Big)^{3/2}.$$
In particular, the ``generator convergence'' condition 3 of~\cite[Proposition 2.5]{BW2025} is satisfied with 
$$ Gf(x)= -xf'(x)+  \frac{f''(x)}{2}, $$ 
 which is the generator of the standard Ornstein-Uhlenbeck process.  
Now we will prove that the \emph{compact containment condition} (condition 4 in~\cite[Proposition 2.5]{BW2025})  is fullfilled as well. \normalcolor For this we have to show that for any given (arbitrarily large) $T>0$ and (arbitrarily small) $\varepsilon >0$ there exists $K>0$ such that 
\begin{equation}\label{compcont}
\mathbf P\left(\sup_{0\le t\le T} \mid \mathcal H_N(t) \mid \, \ge K\right) < \varepsilon.
\end{equation} 
We first prove that there exists a natural number $n$ (independent of $N$) such that
\begin{equation}\label{fewexcursions}
    \P(\mathcal H_N \mbox{ travels between times } 0 \mbox{ and } T \mbox{ more than } n \mbox{ times from } 0 \mbox{ to } -1) < \frac \varepsilon 2.
\end{equation}
To this end we denote by $\widehat {\mathcal H}_N$ the process that arises by reflecting $\mathcal H_N$ from below in $0$, and by  $\mathcal H_N^\ast$ a ``driftless version'' of  $\widehat {\mathcal H}_N$.  More precisely, for $x<0$ the jump rates $\widehat \beta_N(x)$ and $\widehat \delta_N(x)$ of $\widehat {\mathcal H}_N$ are the same as those of $\mathcal H_N$, while  $\widehat \beta_N(0) =0$ and  $\widehat \delta_N(0) =\beta_N(0)+\delta_N(0)$. The jump rates $\beta^\ast_N(x)$ and $\delta^\ast_N(x)$ of $\mathcal H_N^\ast$ are the same as those of $\widehat {\mathcal H}_N$ for $x=0$, and are
$$\beta^\ast_N(x)=\delta^\ast_N(x) =(\widehat \beta_N(x)+\widehat \delta_N(x))/2 $$
for $x< 0$. Let  $\zeta_N$ (resp. $\widehat \zeta_N$ resp. $\zeta_N^\ast$) be the first time at which $\mathcal H_N$ (resp.~$\widehat {\mathcal H}_N$ resp. $\mathcal H_N^\ast$)  hits $-1$ when started in $0$. By construction these three stopping times are stochastically ordered in the sense that for all $t>0$
\begin{equation}\label{stochmon1}
    \P(\zeta_N>t) \ge \P(\widehat \zeta_N>t)\ge \P(\zeta_N^\ast>t).
\end{equation} 
To bound this from below, let $\tau_N^\ast$ be the time at which $\mathcal H_N^\ast$ first hits $\{0,-1\}$ when started in $-1/2$. We observe that
\begin{equation}\label{stochmon}
  \P(\zeta_N^\ast\le t) \le \P(\tau_N^\ast\le t) =\P_{-\frac 12}\left(\sup_{0\le u\le t}|\mathcal H_N^\ast(u)-(-\tfrac 12)| \ge \tfrac 12\right)
\end{equation}

According to \eqref{beta+delta} we have for $x<0$,
\begin{align*}
    \frac{\beta^\ast_N(x)+\delta^\ast_N(x)}{\sigma_N^2}& = \frac{\beta_N(x)+\delta_N(x)}{\sigma_N^2}\\
 &=    \frac{1}{N\rho(1-\rho)}\Big( \sqrt{\tfrac {{\rho}N} { s}}x +N(1-\rho) \Big)\Big(\rho + 2m - \frac{x{\sqrt \rho}}{\sqrt{sN}}\Big(1+ \frac{m}{\rho}\Big) \Big)\\
 &=   \Big( 1+x\frac{\rho}{\sqrt{u_N}}\Big)\Big(1 + \frac{2m}{\rho} - \frac{x}{\sqrt{mN}}\Big(1+ \frac{m}{\rho}\Big) \Big)
\end{align*}
Hence for any $x \in [-1,0)$,
$$ \frac{\beta^\ast_N(x)+\delta^\ast_N(x)}{\sigma_N^2}\leq  1 + \frac{2m}{\rho} + \frac{1}{\sqrt{mN}}\Big(1+ \frac{m}{\rho}\Big)\to 1, \quad \mbox{ as } N \to \infty. $$
Thus for $N$ large enough,
$$(\beta^\ast_N(x)+\delta^\ast_N(x))\frac 1{\sigma_N^2} \le 2.$$
Consequently,  the quadratic variation of the martingale ${\mathcal H}_N^\ast(t\wedge \tau_N^\ast)$, $t\ge 0$ is bounded from above by $2t$. Thus, by Doob's $L^2$ inequality the r.h.s. of~\eqref{stochmon} is bounded from above by $2t$. Combining this with~\eqref{stochmon1} and~\eqref{stochmon} we obtain
\begin{equation}\label{largenough}
    \P(\zeta_N > 1/4) \ge \frac 12.
\end{equation}
With $\zeta_N^{(1)}, \zeta_N^{(2)}, \ldots $ independent copies of $\zeta_N$, we conclude from~\eqref{largenough} that there exists a natural number  $n$ (not depending on $N$) for which
$$\P(\zeta_N^{(1)} +\cdots + \zeta_N^{(n)}\le T) < \varepsilon/2.$$
 By construction of $\zeta_N$, \eqref{fewexcursions} is satisfied for this $n$. 
 
Applying   Lemma~\ref{chalem}\ref{maj_proba_hittingenum} we get that for any $K>2$,
$$ \mathbf P_{\left\lfloor \mathfrak a-\sigma\right\rfloor}(T_{\mathfrak a-K\sigma}< T_{\mathfrak a}) \leq e^{-(K-2)}. $$
While this estimate deals with the large excursions of $\mathcal H_N$ {\em below} $0$, an analogous estimate is obtained for the excursions of $\mathcal H_N$ {\em above} $0$ by noting that  the reverting drift of $Y_0$ above~$\mathfrak a$  is  not smaller than that below~$\mathfrak a$.  In order to satisfy~\eqref{compcont} it thus suffices to choose $K=K(\varepsilon, T)$ such that $n e^{-(K-2)} < \varepsilon/2$. 
\subsection{Asymptotic normality of the Boltzmann-Gibbs distribution}\label{BGdist}
Let $Y_*^{(N)}$ and its equilibrium distribution $\pi_*^{(N)}$ be as in Definition~\ref{Ystar}.
\begin{remark}\label{remBGandrates} a)
It is well known (and readily checked) that the detailed balance equations for~$\pi_*^{(N)}$, expressed in terms of the potential function $U= U^{(N)}$ (defined in~\eqref{defU}) turn into the Boltzmann-Gibbs relations
%$$\frac {\pi_*^{(N)}(n+1)}{\pi_*^{(N)}(n)}= \frac {\lambda_n}{\mu_{n+1}} = \frac {\lambda_n}{\lambda_{n+1}} e^{-(U(n+1)-U(n))}, $$
\begin{equation}\label{BG}
\frac {\pi_*^{(N)}(j)}{\pi_*^{(N)}(i)} = \frac{\lambda_i}{\lambda_j}e^{-(U(j)-U(i))}, \quad 0\le i < j \le N.
\end{equation}
\indent b) Under assumption~\eqref{smallmut} we have
\begin{equation}\label{cspeed}
    \lambda_n \sim \mu_n \sim \rho\mathfrak a/2 \quad \mbox{ provided } |n-\mathfrak a| = o(\mathfrak a)\mbox{ as } N\to \infty.
\end{equation} 
This is immediate because then the second summands in  \eqref{bestupproof} and~\eqref{bestdownproof} are asymptotically negligible compared to the first ones. The asymptotics~\eqref{cspeed} together with the time change $t \mapsto t\mathfrak c_N$ appearing in~\eqref{OUconvdef} also explains the role of $\sigma = \sigma_N=\sqrt{\rho \mathfrak a \mathfrak c}$ defined in~\eqref{defb}. 
 %Indeed, for $n=n(\eps) = \mathfrak a(1-\eps\rho)$ we have
%\begin{align*}
    %\lambda_{n(\eps)}+ \mu_{n(\eps)} &= \mathfrak a (1+\eps \rho)\Big( 1- (1-\rho) (1+\eps \rho)+\frac{m}{\rho}\big(1- (1-\rho) (1+\eps \rho)\big)+m\Big)\\
    %& =\mathfrak a (1+\eps \rho)\Big( \rho+2m-\eps (1-\rho)(\rho+m)\Big),
%\end{align*}
%which is asymptotically equivalent to $\rho \mathfrak a$ provided that $\eps = o(1)$ as $N\to \infty$.

c)
For the $\sigma$-scale around $\mathfrak a$, an inspection of~\eqref{bestupproof} gives for all $K \in \R$ and sufficiently large $N$ the identity
\begin{equation}\label{lambdaonsigma}
  \frac{\lambda_{\lfloor \mathfrak a+ \sigma K\rfloor}}{\lambda_{\lfloor \mathfrak a\rfloor }} = \Big( 1+ \frac{\rho K}{\sqrt{u_N}} \Big) \Big( 1- \frac{(1-\rho)K}{\sqrt{u_N}} \Big) \quad \mbox{as } N\to \infty.  
\end{equation}  
\end{remark}
\begin{proposition}\label{asnormpi}
{\footnotesize}Under assumptions~\eqref{smallmut} and~\eqref{expreg}
   the sequence of equilibrium distributions $\pi_*^{(N)}$ is  asymptotically normal (with mean $\mathfrak a_N$ and variance $\sigma_N^2/2)$ in the following sense:
\begin{equation}\label{loclimstrong}
    \pi_*^{(N)}(\lfloor\mathfrak a_N+K\sigma_N\rfloor) \sim \frac 1{\sqrt \pi \sigma_N}e^{-K^2} \quad \mbox{as } N\to \infty,
\end{equation}
and
\begin{equation}\label{global}
    \mbox{the image of } \pi_*^{(N)} 
\mbox{under the transformation } n \mapsto \frac {n-\mathfrak a}\sigma \mbox{converges weakly to } \mathcal N(0, 1/2).
\end{equation} 
\end{proposition}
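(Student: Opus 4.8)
The starting point is the Boltzmann--Gibbs identity~\eqref{BG} anchored at the state $\lfloor\mathfrak a_N\rfloor$: for every $n\in\{0,\dots,N\}$,
\[
\pi_*^{(N)}(n)=\pi_*^{(N)}(\lfloor\mathfrak a_N\rfloor)\,\frac{\lambda_{\lfloor\mathfrak a_N\rfloor}}{\lambda_n}\,e^{-(U^{(N)}(n)-U^{(N)}(\lfloor\mathfrak a_N\rfloor))},
\]
with $\lambda_0$ read as $\lambda_0^\ast=1$ when $n=0$. For $n=\lfloor\mathfrak a_N+K\sigma_N\rfloor$ with $|K|\le M$, \eqref{lambdaonsigma} gives $\lambda_{\lfloor\mathfrak a_N\rfloor}/\lambda_n\to1$ and Proposition~\ref{Uquad} gives $U^{(N)}(n)-U^{(N)}(\lfloor\mathfrak a_N\rfloor)\to K^2$; the passage to the integer part costs only $|U^{(N)}(n+1)-U^{(N)}(n)|=O(\sqrt{u_N}\,|n/\mathfrak a_N-1|)=o(1)$ near $\mathfrak a_N$, and all errors are $o(1)$ uniformly on $|K|\le M$. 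Hence $\pi_*^{(N)}(\lfloor\mathfrak a_N+K\sigma_N\rfloor)\sim\pi_*^{(N)}(\lfloor\mathfrak a_N\rfloor)\,e^{-K^2}$ uniformly on bounded windows, so both~\eqref{loclimstrong} and~\eqref{global} reduce to proving $\pi_*^{(N)}(\lfloor\mathfrak a_N\rfloor)\sim 1/(\sqrt\pi\,\sigma_N)$.

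\textbf{Normalisation and reduction to tail-tightness.} Split $1=\sum_n\pi_*^{(N)}(n)=\pi_*^{(N)}(\lfloor\mathfrak a_N\rfloor)\,(S_{N,M}+R_{N,M})$, where $S_{N,M}$ is the part of $\sum_n\tfrac{\lambda_{\lfloor\mathfrak a_N\rfloor}}{\lambda_n}e^{-(U^{(N)}(n)-U^{(N)}(\lfloor\mathfrak a_N\rfloor))}$ over $|n-\mathfrak a_N|\le M\sigma_N$ and $R_{N,M}\ge0$ is the complementary tail. By the uniform local estimate, $S_{N,M}/\sigma_N\to\int_{-M}^{M}e^{-x^2}\,dx$ (a Riemann sum with mesh $1/\sigma_N\to0$), so $\limsup_N\pi_*^{(N)}(\lfloor\mathfrak a_N\rfloor)\sigma_N\le1/\sqrt\pi$. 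For the reverse inequality it suffices to establish the uniform tail-tightness statement $\limsup_{N\to\infty}R_{N,M}/\sigma_N\to0$ as $M\to\infty$, i.e.\ that $\pi_*^{(N)}$ puts mass $\to0$ (uniformly in $N$ as $M\to\infty$) outside $[\mathfrak a_N-M\sigma_N,\mathfrak a_N+M\sigma_N]$. Granting this, $\pi_*^{(N)}(\lfloor\mathfrak a_N\rfloor)\sigma_N\to1/\sqrt\pi$, which yields~\eqref{loclimstrong}; and~\eqref{global} follows by testing against bounded continuous $f$: tail-tightness cuts the integral to $|n-\mathfrak a_N|\le M\sigma_N$, where the uniform local limit turns it into $\tfrac1{\sqrt\pi}\int_{-M}^{M}e^{-x^2}f(x)\,dx$, and then $M\to\infty$.

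\textbf{The tail estimate (the crux).} Decompose the complement of the window into near-bulk strips $M\sigma_N<|n-\mathfrak a_N|<\varepsilon_0\mathfrak a_N$ and the ``far'' set $\{n\le(1-\varepsilon_0)\mathfrak a_N\}\cup\{n\ge(1+\varepsilon_0)\mathfrak a_N\}$, and invoke Lemma~\ref{lem_prop_UH}. On the near-bulk strips, the quadratic bound underlying Proposition~\ref{Uquad} (via~\eqref{HyH1}) gives $U^{(N)}(n)-U^{(N)}(\lfloor\mathfrak a_N\rfloor)\ge(1-C\varepsilon_0)((n-\mathfrak a_N)/\sigma_N)^2$, while $\lambda_{\lfloor\mathfrak a_N\rfloor}/\lambda_n$ is bounded by~\eqref{cspeed}, so their contribution divided by $\sigma_N$ is $\le Ce^{-cM^2}/M$. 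On the far set, strict concavity of $H$ with $H'(1)=0$ together with~\eqref{HyH1} gives $H(1)-H(n/\mathfrak a_N)\ge\delta(\varepsilon_0)>0$ uniformly in $N$ (for $\varepsilon_0$ small and $N$ large), and the tangent-line bound for $H$ makes $U^{(N)}(n)-U^{(N)}(\lfloor\mathfrak a_N\rfloor)$ exceed $2u_N\delta(\varepsilon_0)$ plus a term growing at least linearly in the distance from $(1\mp\varepsilon_0)\mathfrak a_N$ with rate of order $u_N/\mathfrak a_N$; the essential point is to retain the factor $1/n$ (resp.\ $1/(N-n)$) coming from $1/\lambda_n$, so that the resulting geometric-type sum is $O(\tfrac{\mathfrak a_N}{u_N}e^{-2u_N\delta(\varepsilon_0)})$ and, after dividing by $\sigma_N=\rho_N\mathfrak a_N/\sqrt{u_N}$, is $O(u_N^{-1/2}e^{-2u_N\delta(\varepsilon_0)})\to0$. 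The boundary state $n=N$, where $\lambda_N=0$, is handled directly by detailed balance, $\pi_*^{(N)}(N)=\pi_*^{(N)}(N-1)\lambda_{N-1}/\mu_N$, which is negligible since $\mu_N= m_NN\to\infty$. These estimates are of exactly the kind already performed in the proofs of Lemmas~\ref{comingdown} and~\ref{lemtimetoa} and in~\cite[Lemma 4.6]{IGSW}. I expect the only genuinely delicate issue to be uniformity in $N$: since~\eqref{expreg} only guarantees $u_N\to\infty$, possibly arbitrarily slowly, one must not bound $1/\lambda_n$ crudely by $1$ (a spurious factor $\mathfrak a_N$ or $N$ would then ruin the estimate) but must keep it and exploit the geometric decay of the potential, where the properties of $H$ from Lemma~\ref{lem_prop_UH}---in particular the logarithmic divergence of $H'$ at the right endpoint, see~\eqref{der_H}---do the work near the boundary.
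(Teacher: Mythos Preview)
Your strategy is sound and leads to a correct proof, but it takes a genuinely different route from the paper's. You compute the normalising constant $\pi_*^{(N)}(\lfloor\mathfrak a\rfloor)$ by summing $\sum_n 1/(\lambda_n r_n)$ and controlling the tails directly. The paper instead obtains $\pi_*^{(N)}(\lfloor\mathfrak a\rfloor)$ in one stroke from a commute-time identity (Aldous--Fill, \cite[Cor.~2.8, p.~34]{aldous-fill-2014}),
\[
\pi_*^{(N)}(\lfloor\mathfrak a\rfloor)=\frac{1}{\bigl(\E_{\mathfrak a}[T_0^*]+\E_0[T_{\mathfrak a}^*]\bigr)}\cdot\frac{1}{\lambda_{\mathfrak a}+\mu_{\mathfrak a}}\cdot\frac{2}{\P_{\mathfrak a-1}(T_0<T_{\mathfrak a})},
\]
and then plugs in the already-established asymptotics $\E_{\mathfrak a}[T_0^*]\sim e_N$ (from~\cite{IGSW}), $\E_0[T_{\mathfrak a}^*]=O(\mathfrak c\log\mathfrak a)$ (Lemma~\ref{lemtimetoa}), $\lambda_{\mathfrak a}+\mu_{\mathfrak a}\sim\rho\mathfrak a$ (eq.~\eqref{cspeed}), and $\P_{\mathfrak a-1}(T_0<T_{\mathfrak a})\sim(2/\mathfrak c)e^{-2u_NH(1)}$ (Lemma~\ref{chalem}\ref{hitprobgenenum}). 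The explicit formula~\eqref{eN} for $e_N$ then yields $\pi_*^{(N)}(\lfloor\mathfrak a\rfloor)\sim 1/(\sqrt\pi\,\sigma)$ with no further work. Once this is known, the local limit~\eqref{loclimstrong} follows from~\eqref{BG}, \eqref{lambdaonsigma} and Proposition~\ref{Uquad} exactly as you outline, and crucially tail-tightness becomes an \emph{output} of the argument (via $\sum_n\pi_*^{(N)}(n)=1$ and the uniform local limit) rather than an input. What your approach buys is independence from the deep result~\eqref{asympeN}; the price is redoing a Laplace-type tail analysis that is morally equivalent to, and about as delicate as, the one underlying that result in~\cite{IGSW}. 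Your caveat about uniformity in $N$ is well taken: in particular, the asserted bound $O(\mathfrak a_N u_N^{-1}e^{-2u_N\delta})$ for the far tail does not yet account for a possible $\log\mathfrak a_N$ contribution from the range $n\lesssim\mathfrak c_N$ (where $r_n$ is close to~$1$ while $1/\lambda_n\sim 2/n$), and handling this cleanly requires precisely the care that the paper sidesteps by recycling~\eqref{asympeN}.
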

\begin{proof}
From~\eqref{asympeN}    we know that
$$\E_{\mathfrak a}[T_0^*]:= \E\left[T_0^*\, \mid \,Y_0^{(N)}(0) = \lfloor \mathfrak a \rfloor \right]
        \sim e_N$$
Using Corollary 2.8 on p.34 of~\cite{aldous-fill-2014} we get (suppressing the floors around $\mathfrak a$ in the rest of this proof)
   \begin{equation}\label{weigtincenter}
       \pi_*^{(N)}(\mathfrak a) = \frac 1{\E_{\mathfrak a}[T_0^*]+\E_{0}[T_{\mathfrak a}^*] } \frac 1{\lambda_{\mathfrak a }+\mu_{ \mathfrak a }} \frac 1{\P_{\mathfrak a-1}(T_0 < T_{\mathfrak a})/2}\sim \frac 1{\sigma \sqrt \pi},
   \end{equation}
    where for the last asymptotics we used~\eqref{asympeN} together with Lemma~\ref{lemtimetoa}, as well as~\eqref{cspeed} and Lemma~\ref{chalem}~A). 

    For $K>0$ we obtain from~\eqref{BG}, \eqref{lambdaonsigma} and Proposition~\ref{Uquad} that
    \begin{equation}\label{deML}
   \frac{\pi_*^{(N)}(\mathfrak a + K\sigma)}{\pi_*^{(N)}(\mathfrak a)}=  \frac{\lambda_{\mathfrak a+ \sigma K}}{\lambda_{\mathfrak a}}e^{-(U(\mathfrak a+ K\sigma) - U(\mathfrak a))} \to e^{-K^2}.
\end{equation}
Combining this with~\eqref{weigtincenter} we obtain~\eqref{loclimstrong} for $K>0$.
The proof of~\eqref{loclimstrong} for $K<0$ is completely analogous. Since Proposition~\ref{Uquad} together with~\eqref{lambdaonsigma} ensures that the convergence in~\eqref{deML} is uniform on compacts as a function of $K$, the limit assertion~\eqref{global} then follows by standard arguments.   
\end{proof}
\subsection{Asymptotic normality of the quasi-equilibria: Proof of \mbox{Theorem~\ref{thmY0}.\ref{thmY0:c}}}
We first derive a rough ``large deviation'' bound for the quasi-equilibrium distribution.
\begin{lemma}\label{lemqconc}
    The quasi-equilibrium $\alpha_N$ of $Y_0^{(N)}$ obeys
    \begin{equation}\label{qconcold}
\alpha_N(\{\lfloor \mathfrak a/2\rfloor, \lfloor \mathfrak a/2\rfloor+1, \ldots, N \})  \to 1 \quad \mbox{as } N\to \infty.
    \end{equation}
\end{lemma}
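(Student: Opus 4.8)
The plan is to compare the quasi-equilibrium $\alpha_N$ of $Y_0^{(N)}$ with the equilibrium distribution $\pi_*^{(N)}$ of the softly reflected chain $Y_*^{(N)}$, for which we have already established the sharp asymptotic normality (Proposition~\ref{asnormpi}); in particular $\pi_*^{(N)}$ puts asymptotically all its mass on the window $[\mathfrak a_N - C\sigma_N, \mathfrak a_N + C\sigma_N]$, hence certainly on $\{\lfloor \mathfrak a/2\rfloor, \ldots, N\}$ since $\sigma_N = o(\mathfrak a_N)$ under~\eqref{expreg}. So it suffices to control the discrepancy between $\alpha_N$ and $\pi_*^{(N)}$ on the ``bad'' set $B_N := \{1, \ldots, \lfloor \mathfrak a/2\rfloor - 1\}$. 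First I would recall the classical characterisation of the quasi-equilibrium of a birth-death chain via the expected occupation measure before extinction: $\alpha_N(k)$ is proportional to $\E_{\alpha_N}\!\big[\int_0^{T_0} \mathbf 1\{Y_0(t)=k\}\,dt\big]$, or equivalently (using reversibility of the associated discrete chain together with the harmonic function $R$ from~\eqref{RKdef}) expressible through the Green function of $Y_0$ killed at $0$.

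The key estimate to prove is then that the $\alpha_N$-mass of $B_N$ is negligible. Two complementary ingredients are available. On the one hand, once $Y_0$ is started at or above $\lfloor \mathfrak a\rfloor$, Lemma~\ref{chalem}\ref{maj_proba_hittingenum} (with $\mathfrak d_N = \sigma_N$, or with $\mathfrak d_N = \mathfrak a/2$) shows that the probability of ever dropping to $\lfloor \mathfrak a/2\rfloor$ before returning to $\lfloor\mathfrak a\rfloor$ is exponentially small in $\mathfrak u_N$; since $Y_0$ makes $\Theta(v_N)$ excursions from $\lfloor\mathfrak a\rfloor$ before extinction (Remark~\ref{Rem2_5}a), a union bound still leaves this event's total contribution to the occupation measure subdominant. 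On the other hand, conditionally on $Y_0$ being in $B_N$, the chain is (below $\mathfrak c_N$ a slightly supercritical branching process, and between $\mathfrak c_N$ and $\mathfrak a/2$ pushed strongly upward by the drift~\eqref{drift}), so the expected time spent in $B_N$ per visit is at most $O(\mathfrak c \log \mathfrak a)$ by Theorem~\ref{thmY0}.\ref{thmY0:a} and Lemma~\ref{comingdown}; comparing this with the dominant occupation time $\Theta(e_N) = \Theta(v_N \sigma_N/(\rho\mathfrak a))$ near $\mathfrak a$ shows the ratio tends to $0$.

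Concretely, I would write $\alpha_N(B_N) = G_N(B_N)/G_N(\{1,\ldots,N\})$ where $G_N$ is the expected occupation measure started from quasi-stationarity, bound $G_N(B_N)$ from above by (number of down-crossings of $\lfloor\mathfrak a/2\rfloor$) $\times$ (expected time per excursion into $B_N$), bound the former by the exponentially small hitting probability from Lemma~\ref{chalem}\ref{maj_proba_hittingenum} times the total number $\Theta(v_N)$ of excursions from $\lfloor\mathfrak a\rfloor$, and bound $G_N(\{1,\ldots,N\}) \ge G_N(\{\lfloor\mathfrak a\rfloor\}) = \Theta(e_N)$ using~\eqref{asympeN} and~\eqref{weigtincenter}-type identities. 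The quotient is then $e^{-\Theta(\mathfrak u_N)}\cdot \mathrm{poly}$, which vanishes under~\eqref{expreg}. The main obstacle I anticipate is making the ``number of excursions reaching $B_N$'' bound rigorous while the chain is already in its quasi-stationary regime (rather than started cleanly at $\lfloor\mathfrak a\rfloor$): one must either invoke a cycle/renewal decomposition of the occupation measure anchored at visits to $\lfloor\mathfrak a\rfloor$ — using that $\P_{\alpha_N}(Y_0 \text{ visits } \lfloor\mathfrak a\rfloor \text{ before } 0)\to 1$, which itself follows from Lemma~\ref{chalem}\ref{hitprobgenenum} — or work directly with the Green-function/harmonic-function formula $\alpha_N(k) \propto r_k^{-1}(\ldots)$ derived from~\eqref{def_ri}-\eqref{RKdef}, and bound the resulting sums over $B_N$ against the sum near $\mathfrak a$ using the potential asymptotics of Lemma~\ref{lem_prop_UH} (concavity of $H$, $H'(1)=0$, $H(1)=\eta$). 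I would take the latter, more computational route since all the needed potential-function estimates are already in place.
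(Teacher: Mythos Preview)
Your plan has a circularity that neither of your two proposed routes resolves. The occupation-measure identity
\[
\alpha_N(B_N)=\frac{\E_{\alpha_N}\!\big[\text{time in }B_N\big]}{\E_{\alpha_N}[T_0]}
\]
is correct, but every way you propose to bound the ratio feeds on information about $\alpha_N$ that you do not yet have. To get $\E_{\alpha_N}[T_0]\gtrsim e_N$ from~\eqref{asympeN} you need $\alpha_N$ to put mass bounded away from $0$ on $\{j:j\gg\mathfrak c_N\}$, which is a weak form of the very concentration you are proving. The renewal decomposition anchored at $\lfloor\mathfrak a\rfloor$ likewise needs $\P_{\alpha_N}(T_{\mathfrak a}<T_0)\to 1$; Lemma~\ref{chalem}\ref{hitprobgenenum} only controls $\P_{\lfloor\mathfrak a-\mathfrak d\rfloor}(T_0<T_{\mathfrak a})$ for $\mathfrak d=o(\sigma_N)$, i.e.\ for starting points already within $o(\sigma_N)$ of $\mathfrak a$, so invoking it for a start in $\alpha_N$ again presupposes concentration near $\mathfrak a$. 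Finally, the ``computational route'' via a formula $\alpha_N(k)\propto r_k^{-1}(\ldots)$ conflates $\alpha_N$ with $\pi_*^{(N)}$: detailed balance~\eqref{BG} holds for the reflected equilibrium, not for the quasi-equilibrium, which is the Perron eigenvector of the killed generator and has no closed form in the $r_k$ alone without the unknown eigenvalue $\theta_N=\mu_1\alpha_N(1)$.

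The paper breaks the circle by exploiting the defining property~\eqref{QED} at a fixed time $t_N=\mathfrak c_N(\log\mathfrak a_N)^2$: one has $\alpha_N(\{\ge\mathfrak a/2\})=\P_{\alpha_N}(Y_0(t_N)\ge\mathfrak a/2\mid T_0>t_N)$, and the right-hand side is a convex combination (over starting points $j$, weighted by $\alpha_N(j)$) of ratios $\P_j(Y_0(t_N)\ge\mathfrak a/2)/\P_j(T_0>t_N)$. The key is that each such ratio tends to $1$ \emph{uniformly over all} $j$, with no a~priori input on $\alpha_N$: for $j\ge\mathfrak a$ one uses Lemma~\ref{chalem}\ref{maj_proba_hittingenum} to keep $Y_0$ above $\mathfrak a/2$, and for $j<\mathfrak a$ one stochastically dominates $Y_0$ conditioned on survival by the reflected chain $Y_*$, which by Lemma~\ref{lemtimetoa} reaches $\mathfrak a$ well before $t_N$. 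This pointwise-in-$j$ argument is the missing idea in your plan.
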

\begin{proof}  Preparing for an application of Lemma~\ref{lemtimetoa} later in the proof, we define 
\begin{equation}\label{deftN}
    t_N:= \mathfrak c_N(\log \mathfrak a_N)^2. 
\end{equation}
  Because of~\eqref{QED},  the l.h.s. of~\eqref{qconcold} equals
  \begin{align}  \label{Pan} \P_{\alpha_N^{}}\Big(Y_0(t_N)\ge \frac{\mathfrak a}2\, \big | \, T_0 > t_N\Big) = \frac{\sum_{j=1}^N\alpha_N^{}(j)\P_j(Y_0(t_N)\ge \frac{\mathfrak a}2)}{\sum_{j=1}^N\alpha_N(j)\P_j(Y_0(t_N)> 0)}=: \beta_N \le 1.
  \end{align}
  The proof will be accomplished if we can show that $\beta_N\to 1$ as $N\to \infty$.
  Now assume that $\liminf \beta_N <1$, i.e. there exists an $\varepsilon > 0$ and a sequence $(N_k)$ converging to $\infty$ for which $\beta_{N_k}\le 1-\varepsilon$. This implies that for all $k$ there exists some $i_k\in [N_k]$ for which 
  $$\P_{i_k}\left(Y_0^{(N_k)}(t_{N_k})\ge \mathfrak a_{N_k}/2\right) \le (1-\varepsilon) \P_{i_k}\left(Y_0^{(N_k)}(t_{N_k})> 0\right).  $$
To lead this to a contradiction we will
prove that for any sequence $(j_N)$ 
\begin{equation}\label{singleratios}
    \frac{\P_{j_N}(Y_0(t_N)\ge \frac{\mathfrak a}2)}{\P_{j_N}(Y_0(t_N)> 0)} = \P_{j_N}\left(Y_0(t_N) \ge \frac{\mathfrak a}2 \, \big |\, T_0 > t_N\right) \to 1.
\end{equation}
To show~\eqref{singleratios} we consider the two subsequences of $(j_N)$ that are above (respectively below) $\mathfrak a_N$:

(i) For $j_N \ge \mathfrak a_N$,  the  numerator (and hence also the denominator) of the l.h.s. of~\eqref{singleratios} converges to~1. To see this, note that the event $\{T_{\mathfrak a} > t_N\}$ implies the event $\{Y_0(t_N)\ge \mathfrak a/2\}$, while the event $\{T_{\mathfrak a} \le t_N\}$ allows a re-start at time  $T_{\mathfrak a}$. We claim that then $Y_0$ remains with high probability above $\mathfrak a/2$ until time $t_N$. To see this, recall from~\eqref{largenough} that
$$\E_{\lfloor \mathfrak a\rfloor}[T_{\lfloor \mathfrak a-\sigma\rfloor}] \ge \mathfrak c/8.$$
We know from    from Lemma~\ref{chalem}\ref{maj_proba_hittingenum},   that
\begin{equation}\label{wanted} \mathbf P_{\left\lfloor \mathfrak a-\sigma\right\rfloor}(T_{\mathfrak a/2}< T_{\mathfrak a}) \leq e^{-\rho^2(\sqrt{u_N}/(2\rho)-2)}.  
\end{equation}
The estimate~\eqref{wanted} implies that
$$ \mathbf P_{\left\lfloor \mathfrak a\right\rfloor}(T_{\mathfrak a/2}<t_N) \leq  \frac{8t_N}{\mathfrak c} e^{-\rho^2(\sqrt{u_N}/(2\rho)-2)} =  8(\log \mathfrak a_N)^2 e^{-\rho^2(\sqrt{u_N}/(2\rho)-2)}=o(1) \quad \mbox{ as }N \to \infty.  $$

(ii) For $j_N < \mathfrak a_N$, the process $Y_0$ started in $j_N$ and conditioned not to hit $0$ by time $t_N$ is stochastically lower-bounded  by the process $Y_*$ (started in $j_N$) that was introduced in Definition~\ref{Ystar}. As proved in~Lemma~\ref{lemtimetoa}, we have $\E_0[T_\mathfrak a^*]\ll t_N$ as $N\to \infty$. This shows that $$\P_{j_N}\left(T_{\mathfrak a} \le \frac{t_N}2 \, \Big |\, T_0 > t_N\right) \to 1 \quad \mbox{ as } N\to \infty.$$ 
Thus, by a re-start of $Y_0$ in time $T_\mathfrak a$,  part (i)  is applicable  to ensure the convergence~\eqref{singleratios}. 
\end{proof}
Recall from Definition~\ref{Ystar} the definition of the recurrent process $Y_*^{(N)}$ and its equilibrium distribution $\pi_*^{(N)}$. In the spirit of a Doeblin coupling (cf ~\cite[Sec. 14.1.1.]{aldous-fill-2014}), we define the joint distribution of  $(Y_0^{(N)}, Y_*^{(N)}) $ as follows. Starting in the product distribution $\alpha_N \otimes \pi_*^{(N)}$, the two processes evolve independently up to their first meeting time, which we denote by $T_{\rm meet}=T_{\rm meet}^{(N)}$. As before, $T_0=T_0^{(N)}$ denotes the time at which $Y_0^{(N)}$ hits $0$ for the first time. On the event $\{T_{\rm meet}  \le T_0\}$, the process $(Y_0^{(N)}(t))_{T_{\rm meet}  \le t \le T_0}$ is taken as an identical copy of $(Y_*^{(N)}(t))_{T_{\rm meet}  \le t \le T_0}$. After time $T_0$,  the process $Y_0^{(N)}$ remains in $0$, while $Y_*^{(N)}$ continues to follow the dynamics specified in Definition~\ref{Ystar}.

As in the proof of Lemma~\ref{lemqconc} we set $t_N:= \mathfrak c_N(\log \mathfrak a_N)^2$.  

\begin{lemma}\label{meetprob}   With $t_N$ as in~\eqref{deftN}, the just  defined meeting and absorption times satisfy
 \begin{equation}\label{timesorder}
   \P(T_{\rm meet}^{(N)} < t_N < T_0^{(N)}) \to 1 \quad \mbox{ as } N\to \infty.  
 \end{equation}
\end{lemma}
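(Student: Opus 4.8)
The plan is to prove the two statements $\P(T_{\rm meet}^{(N)} < t_N) \to 1$ and $\P(t_N < T_0^{(N)}) \to 1$ separately, since $t_N = \mathfrak c_N(\log \mathfrak a_N)^2$ sits comfortably between the relaxation timescale $O(\mathfrak c_N \log \mathfrak a_N)$ and the extinction timescale $e_N$, which is exponentially large in $\mathfrak u_N$. The second one is the easier half: starting from $\alpha_N$, Theorem~\ref{thmY0}.\ref{thmY0:d} (or directly~\eqref{asympeN} together with the fact that, under $\alpha_N$, $T_0$ is asymptotically exponential with mean $\sim e_N$) gives $\P_{\alpha_N}(T_0 > t_N) \to 1$ as soon as $t_N / e_N \to 0$, which holds since $e_N$ grows like $\exp(2\mathfrak u_N \eta)$ while $t_N$ is only polynomial in $\mathfrak a_N$. (Alternatively one can avoid invoking part d) and argue via Lemma~\ref{chalem}\ref{maj_proba_hittingenum} and Lemma~\ref{lemqconc}: under $\alpha_N$ the walk is with high probability above $\mathfrak a_N/2$, and the probability it descends to~$0$ before time $t_N$ is bounded by $(t_N/\mathfrak c_N)$ times the one-excursion descent probability, which is $e^{-2\mathfrak u_N H(1)(1+o(1))}$, again negligible.)

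For $\P(T_{\rm meet}^{(N)} < t_N) \to 1$ I would use the structure of the Doeblin coupling: $Y_0$ starts from $\alpha_N$ and $Y_*$ from its equilibrium $\pi_*^{(N)}$, and the two run independently until they first coincide. The key observation is that both processes reach a neighbourhood of $\mathfrak a_N$ quickly and then spend a long time fluctuating around it, so they have ample opportunity to meet. Concretely: by Lemma~\ref{lemqconc}, $\alpha_N$ is concentrated above $\mathfrak a_N/2$; by Lemma~\ref{lemtimetoa} (and~\eqref{dursweep}), from any state below $\mathfrak a_N$ the conditioned $Y_0$ reaches $\lfloor \mathfrak a_N\rfloor$ in expected time $O(\mathfrak c_N \log \mathfrak a_N) \ll t_N$, and $Y_*$ from its equilibrium (which by Proposition~\ref{asnormpi} is also concentrated on the $\sigma_N$-scale around $\mathfrak a_N$) likewise reaches $\lfloor \mathfrak a_N\rfloor$ in time $O(\mathfrak c_N\log\mathfrak a_N)$ with high probability. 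So within time $\ll t_N$ both processes are, with high probability, at the common point $\lfloor \mathfrak a_N\rfloor$ at some (random) times; the difficulty is that they need to be there \emph{simultaneously}.

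To handle the simultaneity I would argue that, once $Y_0$ (conditioned on $T_0 > t_N$) and $Y_*$ have both entered the band $B_N := \{n : |n-\mathfrak a_N| \le \sigma_N\}$ — which happens before time $t_N/2$ with high probability — each process, over the remaining time interval of length $\gg \mathfrak c_N$, makes order $t_N/\mathfrak c_N = (\log\mathfrak a_N)^2$ independent ``attempts'' to be at $\lfloor\mathfrak a_N\rfloor$, and on the $\mathfrak c_N$-timescale the Ornstein–Uhlenbeck approximation of Theorem~\ref{thmY0}.\ref{thmY0:b} tells us the two (independent) diffusions, both near $0$ in $\mathcal H_N$-coordinates, meet with probability bounded below over each $O(\mathfrak c_N)$-window. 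More elementarily: conditioned on both lying in $B_N$, the difference process is, up to the $\mathfrak c_N$-time change, close to a recurrent random walk, and the probability of \emph{not} meeting in $k$ windows decays geometrically in $k$; with $k \asymp (\log\mathfrak a_N)^2 \to \infty$ this forces $\P(T_{\rm meet} < t_N \mid \text{both in } B_N \text{ by } t_N/2) \to 1$. One also needs that $Y_0$ conditioned on $T_0 > t_N$ does not leave the band prematurely for long stretches — but this is exactly what the lower bound~\eqref{largenough} on $\E_{\lfloor\mathfrak a\rfloor}[T_{\lfloor\mathfrak a-\sigma\rfloor}]$ and the Lemma~\ref{chalem}\ref{maj_proba_hittingenum} tail estimate on deep excursions control, as in the proof of Lemma~\ref{lemqconc}(i). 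Combining the bound on $T_{\rm meet}$ with the bound $\P(T_0 > t_N)\to 1$ yields~\eqref{timesorder}.

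The main obstacle I anticipate is making the ``they meet because each visits $\lfloor\mathfrak a_N\rfloor$ many times'' heuristic rigorous while simultaneously carrying the conditioning $\{T_0 > t_N\}$ on the $Y_0$-marginal: the conditioning distorts the law of $Y_0$, and one must check it does not destroy recurrence to the band on the $\mathfrak c_N$-timescale. The clean way around this is to replace, for the purpose of the meeting argument, the conditioned $Y_0$ by the reflected process $Y_*$ started from $\alpha_N$ (stochastically comparable, as used in Lemma~\ref{lemqconc}(ii)), so that one is comparing two copies of an ergodic chain with the same equilibrium $\pi_*^{(N)}$ and a genuine Doeblin-type coupling time estimate applies; the correction from replacing the true conditioned process by $Y_*$ costs only $\P(T_0 \le t_N)$, which we have already shown is $o(1)$.
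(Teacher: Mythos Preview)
Your strategy is correct and aligns with the paper's: show $\P(T_0>t_N)\to 1$ (easy, from $t_N/e_N\to 0$) and $\P(T_{\rm meet}<t_N)\to 1$ via the OU approximation, using that two independent copies near $0$ meet after a geometrically distributed number of $O(\mathfrak c_N)$-windows. The paper makes the ``positive probability per window'' step concrete by tracking the stopping times $\rho_{N,i}$ (returns of $\mathcal H_N$ to $0$ after a visit to $-1$) and the last zero $\tau_{N,i}$ before each such return, for both processes; if one process's downward-excursion interval $[\tau_{N,j},\rho_{N,j}]$ contains another's $\tau^*_{N,i}$ with $\rho_{N,j}\le\rho^*_{N,i}$, the two paths must cross. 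This intermediate-value device is what replaces your ``difference process is close to a recurrent random walk'' heuristic (which, incidentally, should read: the difference of two independent OU processes is again OU, hence recurrent).

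Two comments on your treatment of the conditioning. First, the worry in your last paragraph is largely unnecessary: since $\P(T_0>t_N)\to 1$ unconditionally under $\P_{\alpha_N}$, you can run the entire meeting argument for the \emph{unconditioned} $Y_0$ and intersect with $\{T_0>t_N\}$ at the end at cost $o(1)$; there is no need to substitute a reflected process. Second, your proposed substitution ``replace the conditioned $Y_0$ by $Y_*$ started from $\alpha_N$'' does not, as stated, control the meeting time of the original pair $(Y_0,Y_*)$ from the coupling --- you would be bounding the meeting time of a \emph{different} pair. The paper avoids this by (i) using Lemma~\ref{lemqconc} to start $Y_0$ from $j_N\ge\mathfrak a_N/2$, (ii) letting both processes reach $\lfloor\mathfrak a_N\rfloor$ in time $o(t_N)$, and (iii) running the excursion-nesting argument on the unconditioned processes directly.
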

\begin{proof}
     Because of~\eqref{qconcold} it suffices to show that for all sequences $(j_N)$ with  $j_N \ge \tfrac {\mathfrak a_N}2$ we have
    \begin{equation} \label{approach}
        \P(T_{\rm meet}^{(N)} < t_N < T_0^{(N)} \mid Y_0^{(N)}(0) = j_N) \to 1 \quad \mbox{ as } N\to \infty. 
    \end{equation}
    Wth $T_{\mathfrak a}= T_{\mathfrak a}^{(N)}$  denoting the time at which $Y_0^{(N)}$ hits $\mathfrak a$ for the first time, we know (under the specified starting conditions) from \eqref{comedown} and \eqref{dursweep} that
    \begin{equation}\label{TatN}
        T_\mathfrak a/t_N \to 0 \quad\mbox{in probability as } N\to \infty .
    \end{equation}
With $\mathcal H_N^*$ being defined in terms of $Y_*^{(N)}$ in the same way as $\mathcal H_N$ was obtained from $Y_0^{(N)}$ in~\eqref{OUconvdef}, we define for each $N$ two sequences of stopping times $(\rho_{N,i})_{i\in \N_0}$ and $(\rho_{N,i}^*)_{i\in \N_0}$ inductively as follows:
$$\rho_{N,0}:= T_{\mathfrak a}^{(N)}/\mathfrak c_N,\quad \rho^*_{N,0}:= \min\{t\ge \rho_{N,0}\mid \mathcal H_N^*(t) = 0\}, $$ 
$$\rho_{N,i}^{} := \min\{t>\rho_{N,i-1}^{}\mid \mathcal H_N(t) = 0 \mbox{ and } \mathcal H_N(u) \le -1 \mbox { for some } u \in (\rho_{N,i-1}, t)\},$$
$$\rho_{N,i}^{*} := \min\{t>\rho_{N,i-1}^{*}\mid \mathcal H_N^*(t) = 0 \mbox{ and } \mathcal H_N^*(u) \le -1 \mbox { for some } u \in (\rho_{N,i-1}^{*}, t)\}.$$
In addition, we define two sequences of random times $(\tau_{N,i})_{i\in \N}$ and $(\tau_{N,i}^*)_{i\in \N}$ as follows:
$$\tau_{N,i}^{} := \max\{t<\rho_{N,i}^{}\mid \mathcal H_N(t-) = 0\}, \quad \tau_{N,i}^{*}:= \max\{t<\rho_{N,i}^{*}\mid \mathcal H_N^*(t-) = 0\}. $$
Finally, we define
$$J^N:= \min\{j\ge 1 \mid \exists\, i \mbox { such that } \tau_{N,j} \le \tau_{N,i}^* \le \rho_{N,j} \le \rho_{N,i}^\ast  \}$$
We claim that
\begin{equation}\label{Tmeetbound}
T_{\rm meet}^{(N)} \le \rho_{N,J^N_{}}^{}.
\end{equation}
Indeed, if $i$ is a natural number that obeys $\tau_{N,J^N} \le \tau_{N,i}^* \le \rho_{N,J^N} \le \rho_{N,i}^\ast$, then the excursion of $\mathcal H_N$ that happens between times $\tau_{N,J^N}$ and  $\rho_{N,J^N}$ necessarily meets the excursion of $\mathcal H_N^*$ that happens between times $\tau_{N,i}^*$ and  $\rho_{N,i}^*$.

Because of~\eqref{Tmeetbound} (and~\eqref{asympeN}), the assertion of the Lemma follows if we can show 
\begin{equation} \label{aim}
    \P(\rho_{N,J^N} \le t_N) \to 1 \quad\mbox{ as } N\to \infty.
\end{equation}
To this purpose we observe that,  as a consequence of Theorem~\ref{thmY0}.\ref{thmY0:b}, 
\begin{equation}\label{HNconv}
\mbox{the sequence of processes } (\mathcal H_N(\rho_{N,0}^{} + t))_{t \ge 0} \quad \mbox{converges, as } N\to \infty, \mbox{ in distribution to } \mathcal H.
\end{equation}
The same assertion is true for the processes $(\mathcal H_N^*(\rho_{N,0}^{*} + t))_{t \ge 0}$.

In analogy to $\rho_{N,i}^{}$ and $\rho_{N,i}^*$ we define a sequence of $\mathcal H$-measurable stopping times $(\rho_i)_{i\in \N_0}$ inductively as follows:
\begin{equation}\label{defrhoi}
 \rho_0:= 0, \qquad \rho_i^{} := \min\{t>\rho_{i-1}^{}\mid \mathcal H(t) = 0 \mbox{ and } \mathcal H(u) \le -1 \mbox { for some } u \in (\rho_{i-1}, t)\}.   \end{equation} 
Also, in analogy to $\tau_{N,i}^{}$ and $\tau_{N,i}^*$, we define the sequence of random times $(\tau_i)_{i\in \N}$ as follows: 
$$\tau_{i}^{} := \max\{t<\rho_{i}^{}\mid \mathcal H(t) = 0\}. $$
As a consequence of~\eqref{HNconv} we obtain that the sequence of random sequences $$\mathscr P_N:=\left(\tau_{1,N}, \, \rho_{1,N},\, \tau_{2,N},\,\rho_{2,N},\,\tau_{3,N}, \, \rho_{3,N} \ldots \right)    $$
converges, after a backshift by $\rho_{0,N}$, as $N\to \infty$ in distribution to the random sequence
$$\mathscr P:= \left(\tau_1, \, \rho_1,\, \tau_2, \, \rho_2, \, \tau_3, \, \rho_3, \ldots\right).$$
Likewise, we obtain that the sequence of random sequences 
$$\mathscr P_N^*:=\left(\tau_{1,N}^*, \, \rho_{1,N}^*,\, \tau_{2,N}^*,\,\rho_{2,N}^*,\,\tau_{3,N}^*, \, \rho_{3,N}^* \ldots \right)    $$ converges, now after a backshift by $\rho_{0,N}^*$, as $N\to \infty$ in distribution to $\mathscr P$. Because of the indepencence of $\mathcal H_N$ and $\mathcal H_N^*$ up to their meeting time, we can consider a random sequence $$\mathscr P^*=\left(\tau_1^*, \, \rho_1^*,\, \tau_2^*, \, \rho_2^*, \, \tau_3^*, \, \rho_3^*, \ldots\right)$$ which arises from an i.i.d. copy of $\mathscr P$ after a  random shift
 that is independent of $\mathscr P$. The random variable 
 $$J:= \min\{j\ge 1 \mid \exists\, i \mbox { such that } \tau_{j} \le \tau_{i}^* \le \rho_{j} \le \rho_{i}^\ast  \}$$
 thus figures as an asymptotic stochastic upper bound for the sequence $(J^N)$ as $N\to \infty$. (Note that  on certain events $J^N$ may remain strictly smaller than $J$,  e.g. when  the first meeting of $\mathcal H_N$ and $\mathcal H_N^*$ happens in excursions {\em above} $\mathfrak a$.)
 It follows from basic properties of $\mathcal H$ that the distribution of $J$ 
 has geometric tails. We thus obtain~\eqref{aim}, because for each fixed $N$   the increments $\rho_{N,i}-\rho_{N,i-1}$, $i=1,2,\ldots$ are (as long as $\rho_{N,i} < T_0^{(N)}$) i.i.d. copies of  $\rho_{N,1}-\rho_{N,0}$, which converges as $N\to \infty$ in distribution to $\rho_1$ defined in~\eqref{defrhoi}.
\end{proof}
\begin{remark} 
The choice of $(t_N)$ in~\eqref{deftN} admits modifications. For  specific  choices of starting values $j_N^{}$ (like e.g.  $j_N= \frac {\lfloor \mathfrak a_N\rfloor}2$) one might ask for sequences $(t_N)$ that satisfy~\eqref{approach} and are ``asymptotically as small as possible''. This points into the direction of questions studied e.g.~in~\cite{barbour2025convergence}, for which the sequences $Y_0^{(N)}$ and $Y_*^{(N)}$ might provide an interesting case.
\end{remark}

\begin{proof}[Completion of the proof of Theorem~\ref{thmY0}.\ref{thmY0:c}] In the above-defined coupling we have as a consequence of~\eqref{timesorder} that
$$\P(Y_0^{(N)}(t_N) \neq Y_*^{(N)}(t_N)) \to 0 \quad \mbox{as } N\to \infty.$$
This implies that the variation distance between $\pi_*^{(N)}$ and the distribution of $Y_0^{(N)}(t_N)$ tends to $0$, provided $Y_0^{(N)}$ is started in $\alpha_N$.
Because of~\eqref{QED}, and since $\P(T_0 > t_N) \to 1$, also the variation distance between $\alpha_N$ and $\P_{\alpha_N}(Y_0^{(N)}(t_N) \in (\cdot))$ tends to $0$ as $N\to 0$. Consequently, we have $d_{\rm TV}(\alpha_N, \pi_*^{(N)}) \to 0$ as $N\to \infty$. Combined with Proposition~\ref{asnormpi} b)  this completes the proof of Theorem~\ref{thmY0}.\ref{thmY0:c}.
\end{proof}
%\color{red}
%\begin{remark}
   % We conjecture that   the proof of Lemma~\ref{meetprob} can be sharpened to show that the approach of $Y_0^{(N)}$  to its quasi-equilibrium admits a {\em cutoff} in the sense that the variation distance between $\alpha_N$ and the distribution of $Y_0^{N)}$ (when started in, say, $ \lfloor \mathfrak a_N/2\rfloor$) changes after a time of order $\mathfrak c_N \log \mathfrak a_N$ ``abruptly'' from ``close to $1$'' to ``close to $0$'' (for related results see \cite{barbour2025convergence} and references therein). Since Lemma~\ref{meetprob}  is suffient for the present purpose, such a sharpening is left for future work.
%\end{remark}
%\normalcolor
%\cha{I vote against this remark. We have no idea about the existence of a cutoff in our case as the noise is strong. I find it way too dangerous to claim such a conjecture.}

\subsection{Asymptotic exponentiality of the extinction time: Proof of Theorem~\ref{thmY0}.\ref{thmY0:d}}\label{metast} 
First we we will show 
\begin{proposition}\label{qeexpconv} Let $T_{(\alpha_N,0)}$ be the extinction time of  $Y^{(N)}_0$ when started in its quasi-equilibrium distribution $\alpha_N$ (cf. ~eq.~\eqref{QED}).
Then the sequence $T_{(\alpha_N,0)}/e_N$  converges as $N\to \infty$  in distribution to a standard exponential random variable.
\end{proposition}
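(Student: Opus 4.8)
The plan is to exploit that $\alpha_N$ is the quasi-stationary distribution of $Y_0^{(N)}$ on its (irreducible, transient) class $\{1,\dots,N\}$ of non-absorbing states, which makes the extinction time started from $\alpha_N$ \emph{exactly} exponential. Indeed, writing $Q_N$ for the generator of $Y_0^{(N)}$ restricted to $\{1,\dots,N\}$, the Perron--Frobenius theorem provides a unique $\theta_N>0$ with $\alpha_N Q_N=-\theta_N\alpha_N$, so that $\P_{\alpha_N}(T_0>t)=\alpha_N\mathrm e^{tQ_N}\mathbf 1=\mathrm e^{-\theta_N t}$ for every $t\ge 0$ (the standard characterisation of the quasi-equilibrium, recalled e.g.\ in~\cite{MeleardVillemonais2012}). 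Hence $T_{(\alpha_N,0)}/e_N$ is, for each fixed $N$, exactly $\mathrm{Exp}(\theta_N e_N)$-distributed with $\theta_N=1/\E_{\alpha_N}[T_0]$, and the whole proposition collapses to the single asymptotic statement $\E_{\alpha_N}[T_0]\sim e_N$ as $N\to\infty$.

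I would prove this by sandwiching $\E_{\alpha_N}[T_0]=\sum_{j=1}^N\alpha_N(j)\,\E_j[T_0]$. The map $j\mapsto\E_j[T_0]$ is non-decreasing: running two copies of $Y_0$ with the same upward and downward Poisson clocks attached to each state gives an order-preserving coupling of birth--death chains, whence $T_0^{(j)}\le T_0^{(j')}$ for $j\le j'$. The upper bound is then immediate, $\E_{\alpha_N}[T_0]\le\E_N[T_0]\sim e_N$, where the asymptotics is~\eqref{asympeN} applied with $j_N=N$ (legitimate since $N>\mathfrak a_N\gg\mathfrak c_N$). For the lower bound, pick any $\mathfrak d_N\to\infty$ with $\mathfrak d_N=o(\mathfrak a_N/\mathfrak c_N)$ — such a sequence exists because $\mathfrak a_N/\mathfrak c_N=\mathfrak u_N/\rho_N\to\infty$ by~\eqref{expreg} — so that $\lfloor\mathfrak d_N\mathfrak c_N\rfloor\gg\mathfrak c_N$ while $\mathfrak d_N\mathfrak c_N<\lfloor\mathfrak a_N/2\rfloor$ for $N$ large; by Lemma~\ref{lemqconc} this yields $\alpha_N(\{j\ge\mathfrak d_N\mathfrak c_N\})\ge\alpha_N(\{\lfloor\mathfrak a_N/2\rfloor,\dots,N\})\to 1$, and monotonicity together with~\eqref{asympeN} gives
\begin{equation*}
\E_{\alpha_N}[T_0]\ \ge\ \alpha_N\big(\{j\ge\mathfrak d_N\mathfrak c_N\}\big)\cdot\E_{\lfloor\mathfrak d_N\mathfrak c_N\rfloor}[T_0]\ =\ (1-o(1))\,e_N .
\end{equation*}
Combining the two bounds gives $\E_{\alpha_N}[T_0]\sim e_N$.

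Putting this together, $\theta_N e_N=e_N/\E_{\alpha_N}[T_0]\to 1$, and since for each $N$ the variable $T_{(\alpha_N,0)}/e_N$ is $\mathrm{Exp}(\theta_N e_N)$-distributed, convergence of the rate parameter to $1$ forces convergence in distribution to a standard exponential, as claimed. I do not anticipate a genuine obstacle here: the argument is essentially an assembly of the Green-function asymptotics~\eqref{asympeN} (from~\cite{IGSW}) with the concentration estimate of Lemma~\ref{lemqconc}, and the one step worth spelling out carefully is the identification of $\alpha_N$ with the Perron left eigenvector of $Q_N$ — it is this that upgrades the exponentiality from asymptotic to exact and thereby reduces the whole statement to the first-moment asymptotics $\E_{\alpha_N}[T_0]\sim e_N$.
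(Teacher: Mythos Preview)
Your proof is correct and follows essentially the same approach as the paper: invoke the exact exponentiality of the extinction time started from the quasi-equilibrium, then reduce to $\E_{\alpha_N}[T_0]\sim e_N$ via monotonicity of $j\mapsto\E_j[T_0]$, the Green-function asymptotics~\eqref{asympeN}, and a concentration estimate for $\alpha_N$. The only (minor) differences are that the paper cites Theorem~\ref{thmY0}.\ref{thmY0:c} for the concentration rather than Lemma~\ref{lemqconc}, and that your intermediate sequence $\mathfrak d_N$ is unnecessary --- one can take $j_N=\lfloor\mathfrak a_N/2\rfloor$ directly, since $\mathfrak a_N\gg\mathfrak c_N$.
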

\begin{proof}  It is well known (see e.g.~\cite[Proposition 2]{MeleardVillemonais2012}) that  $T_{(\alpha_N, 0)}$ has an exponential distribution. All what remains to show is thus that  
\begin{equation}\label{QEDexp}
    \E[T_{(\alpha_N, 0)}]\sim e_N \mbox{ as } N\to \infty. 
\end{equation} 
Part c) of Theorem~\ref{thmY0} implies that 
\begin{equation*}
\alpha_N(\{1,2, \ldots \lfloor \mathfrak a/2\rfloor\})  \to 0 \quad \mbox{as } N\to \infty.
    \end{equation*}
On the other hand we know from~\eqref{asympeN} that $\E_{j_N}[T_0]\sim e_N$ for all sequences $j_N$ with $\mathfrak a/2 < j_N \le N$. Since for any fixed $N$ the mapping $n\to \E_n[T_0]$ is increasing in $n$, this implies~\eqref{QEDexp}. 
\end{proof}

 For $j_N\in \{1,\ldots, N\}$  we denote by $T_{(j_N,0)}$ the extinction time of $Y^{(N)}$ when started in $j_N$.
Let $j_N \gg \mathfrak c_N$ and $T_{(\alpha_N,0)}$  be as in Proposition~\ref{qeexpconv}.  The random times $T_{(\alpha_N,0)}$ and $T_{(j_N,0)}$ are stochastically dominated by $T_{(N,0)}$, i.e. there exist couplings 
    \begin{equation}\label{couplingTS}T_{(N,0)}= S_{(N,\alpha_N)} + T_{(\alpha_N,0)}, \quad T_{(N,0)}= S_{(N,j_N)} + T_{(j_N,0)}
    \end{equation}
    with $S_{(N,\alpha_N)}$ and $S_{(N,j_N)}$ nonnegative.\footnote{For a distinguished coupling in which  $S_{N,\alpha_N}$ is a ``time to quasi-equilibrium'' that is independent of $T_{(\alpha_N,N)}$, see Proposition 5 and the remark at the end of Section 3 of \cite{diaconis2009times}.} 
    From~\eqref{asympeN} and Lemma~\ref{lemqconc}   we know that 
    $$\E[T_{(N,0)}]\sim \E[T_{(j_N,0)}] \sim e_N \sim \E[T_{(\alpha_N,0)}].$$
    Thus, because of~\eqref{couplingTS},
    $$0\le \E\left[\frac {S_{(N,\alpha_N)}}{e_N}\right] \to 0 \quad \mbox {as } N\to \infty$$ 
    which by Markov's inequality implies that the sequence $({S_{(N,\alpha_N)}}/{e_N})$ converges to $0$ in probability. 
    %\color{teal}I hope this removes your doubts in this matter \normalcolor 
    A similar argument, again based on~\eqref{couplingTS}, shows that also the sequence $({S_{(N,j_N)}}/{e_N})$ converges to $0$ in probability as  $N\to \infty$.
    %\cha{I do not understand why these two quantities converge to $0$ in probability.} \textcolor{teal}{See the insertion  above.}
    We know from Proposition~\ref{qeexpconv} that   the sequence $({T_{(\alpha_N,0 )}}/{e_N})$ converges in distribution to a  standard exponential random variable. Together with~\eqref{couplingTS}, with the above argument and Slutski's theorem, this shows that also the sequence $({T_{(j_N,0)}}/{e_N})$ is asymptotically standard exponential.
    This completes the proof of part d) of Theorem~\ref{thmY0}.
   \subsection{On the way to extinction: Proof of Theorem~\ref{thmY0}.\ref{thmY0:e}}\label{lastexc}
%\cha{\st{The results of this subsection, besides being of interest in their own right,  serve as a preparation for the proof of Proposition~\ref{prop:sizenewbestclass}.}}
%As a complement to Lemma~\ref{lemtimetoa} we state
%\begin{lemma}\label{lastex} With $T_{\mathfrak a}$ and $T_0$ as in Lemma~\ref{lemtimetoa}, the process $Y_0$ satisfies
     %\begin{equation}\label{timetoa1}
      %\E_{\lfloor \mathfrak a\rfloor -1}[T_0\, |\,  T_0<T_{\mathfrak a}] = O(\mathfrak c \log \mathfrak a) .  
    %\end{equation}
%\end{lemma}

\begin{lemma}\label{improbreturn}For all $\eps < 1/3$, and for a suitably chosen $C>0$,
  with high probability the process~$Y_0$, after having left $\lfloor3\eps \mathfrak a\rfloor$ forever 

  (i) will not return to $\lfloor 2\eps \mathfrak a\rfloor$ after having visited $\lfloor\eps \mathfrak a\rfloor$, and

  (ii)  makes at least $C\eps\frac Nm$ steps   between states~$2\eps \mathfrak a$ and $0$.
\end{lemma}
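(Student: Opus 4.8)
The plan is to treat (i) and (ii) separately, both by reducing to hitting-probability and hitting-time estimates for the embedded birth-death chain, exploiting that below the center of attraction $\mathfrak a$ the process $Y_0$ is well approximated by a slightly supercritical branching process with supercriticality $s_N-m_N$, i.e.\ with critical size $\mathfrak c_N = (s_N-m_N)^{-1}$. For part (i), I would condition on the event $\{T_{\lfloor 3\eps\mathfrak a\rfloor} = \infty\}$ (relative to a suitable future of the process) and on the last visit to $\lfloor 3\eps\mathfrak a\rfloor$; started just below $3\eps\mathfrak a$ and conditioned never to return there, the process is a birth-death chain absorbed both at $0$ and at (a level just below) $3\eps\mathfrak a$, with the latter effectively unreachable. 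The key quantity is $\P_{\lfloor 2\eps\mathfrak a\rfloor}(T_{\lfloor\eps\mathfrak a\rfloor} < T_{\lfloor 3\eps\mathfrak a\rfloor})$ followed by $\P_{\lfloor\eps\mathfrak a\rfloor}(T_{\lfloor 2\eps\mathfrak a\rfloor} < T_0)$; I would show the product of these is exponentially small in $\mathfrak u_N$. Using the harmonicity of $R=R^{(N)}$ from~\eqref{RKdef} together with the representation~\eqref{link_HNH} and the concavity/monotonicity of $H$ on $[0,1]$ (Lemma~\ref{lem_prop_UH}), the relevant ratios of $R$-values are controlled by $e^{-2\mathfrak u_N(H(z_2)-H(z_1))}$ with $z_1<z_2$ strictly inside $(0,1)$, and since $H$ is strictly increasing there with a derivative bounded below (the estimate $H'(y)\ge (1-y)/(1+2m/\rho)$ from the proof of Lemma~\ref{chalem}), this difference is bounded below by a positive constant times $\eps$; hence the probability is $O(e^{-c\eps\mathfrak u_N})\to 0$. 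One has to be a little careful in turning "visits $\lfloor\eps\mathfrak a\rfloor$ then returns to $\lfloor 2\eps\mathfrak a\rfloor$" into a union over excursions and summing the geometric-type bound, but this is routine given the exponential smallness.

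For part (ii), the claim is a lower bound on the number of embedded steps the process makes strictly between states $0$ and $\lfloor 2\eps\mathfrak a\rfloor$ during the final descent. Here I would argue that each individual excursion away from a fixed low level and back typically contributes $\Theta(1)$ steps, and that the process must make order $\eps\mathfrak a$ net downward steps to go from $\approx 2\eps\mathfrak a$ to $0$; but the relevant scale is set by the resampling rate $\tfrac12(1-n/N)n \approx \tfrac 12 n$ near these states, and the total number of \emph{jumps} (up plus down) before absorption, started from level $n$ of order $\eps\mathfrak a$, is on the order of $n \cdot (\text{expected height of the excursion tree})$. More concretely, conditioned on extinction from a low level, the embedded chain behaves like a (sub)critical branching-type walk, and the expected number of steps before hitting $0$ from level $k$ in such a walk is at least of order $k$; combined with a lower bound on the probability that $Y_0$ actually passes near $2\eps\mathfrak a$ on its way down (which is where part (i) is used — it says the descent below $2\eps\mathfrak a$ is ``clean''), one gets that with high probability at least $C\eps\mathfrak a \asymp C\eps N(1-\rho_N)\asymp C'\eps N/m \cdot$(constant) steps are made. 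I would phrase the lower bound via a second-moment / Paley–Zygmund argument on the number of visits to a fixed intermediate level $\lfloor\eps\mathfrak a\rfloor$ between the last exit from $\lfloor 2\eps\mathfrak a\rfloor$ and absorption at $0$: the expected number of such visits is of order $\eps\mathfrak a/\text{(something }O(1))$ by a Green-function computation using the $r_k$'s (which are all $\asymp 1$ in this range since $H$ and its derivatives are bounded there), and the variance is controlled similarly, giving concentration and hence the high-probability lower bound on the step count, since each visit is followed by at least one step.

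The main obstacle I expect is part (ii): getting a clean \emph{lower} bound on the number of steps requires ruling out the (a priori possible) scenario in which the process plummets to $0$ through a small number of large downward-then-sideways moves. In the branching-process heuristic this doesn't happen because the walk is nearly critical and excursions are ``wide'', but making this rigorous for the actual chain with its state-dependent rates — in particular, showing that conditioned on rapid extinction the process still accumulates $\gg \mathfrak c_N$ steps, uniformly in the (random) starting level below $2\eps\mathfrak a$ — needs either a careful comparison with an unconditioned subcritical birth-death chain (via an $h$-transform, using that $r_k$ is bounded on the relevant range so the Doob transform is a bounded perturbation) or a direct occupation-time estimate. I would pursue the $h$-transform route: under the conditioning $\{T_0 < T_{\lfloor 2\eps\mathfrak a\rfloor + 1}\}$ the embedded chain is again a birth-death chain with modified but comparable rates, and for such a chain the expected hitting time of $0$ from level $k$, as well as the number of steps, is bounded below by a constant multiple of $k$ — which yields the bound $\gg \mathfrak c_N$ once we know (from part (i), or from $j_N \gg \mathfrak c_N$ and Lemma~\ref{chalem}) that the last descent starts from a level comparable to $\eps\mathfrak a \gg \mathfrak c_N$. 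Part (i) is comparatively straightforward given the machinery already in place.
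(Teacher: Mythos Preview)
Your approach to part~(i) is in the right spirit but the decomposition you write is not the correct one: both $\P_{\lfloor 2\eps\mathfrak a\rfloor}(T_{\lfloor\eps\mathfrak a\rfloor} < T_{\lfloor 3\eps\mathfrak a\rfloor})$ and $\P_{\lfloor\eps\mathfrak a\rfloor}(T_{\lfloor 2\eps\mathfrak a\rfloor} < T_0)$ are computed for the \emph{unconditioned} process, and the second of these tends to~$1$ (the process is supercritical at level $\eps\mathfrak a\gg\mathfrak c$), so their product is not what controls the event in question. What you actually need is the \emph{conditional} probability $\P_{\lfloor\eps\mathfrak a\rfloor}(T_{\lfloor 2\eps\mathfrak a\rfloor}<T_0\mid T_0<T_{\lfloor 3\eps\mathfrak a\rfloor})$. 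The paper handles this by computing the $h$-transformed step probabilities explicitly: it shows that, conditioned on $\{T_0<T_{3\eps\mathfrak a}\}$, the embedded chain has up-probability $\tfrac12\bigl(1-\tfrac{m}{\rho}(1-\rho)+O(\eps m)\bigr)$ uniformly for $k\le 2\eps\mathfrak a$, and then applies the gambler's ruin formula to get a bound $\exp(-c\,\eps\,\mathfrak u_N)$. Your potential-function route can be made to work too, but only after you write the conditional probability as a ratio of $R$-differences and exploit that $r_{2\eps\mathfrak a}/r_{\eps\mathfrak a}=e^{-2\mathfrak u_N(H(2\eps)-H(\eps))}$.

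The serious gap is in part~(ii). You assert ``$C\eps\mathfrak a \asymp C\eps N(1-\rho_N)\asymp C'\eps N/m\cdot(\text{constant})$'', but this is false: the ratio $(N/m)/\mathfrak a = 1/(m(1-\rho)) = \mathfrak c/\rho \to \infty$, so the target $C\eps N/m$ is larger than $\eps\mathfrak a$ by a diverging factor of order $\mathfrak c_N$. Consequently your lower bound ``number of steps $\gtrsim$ starting level $\asymp \eps\mathfrak a$'' is too weak by exactly this factor. The missing idea is quantitative: under the $h$-transform the embedded chain is \emph{nearly symmetric}, with downward bias $\mu^{(-)}\asymp m(1-\rho)/\rho$, so to descend a net distance $\eps\mathfrak a = \eps N(1-\rho)$ one needs on the order of $\eps\mathfrak a/\mu^{(-)} \asymp \eps N\rho/m$ steps, not $\eps\mathfrak a$ steps. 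The paper establishes this by coupling the conditioned chain with a biased simple random walk $W^{(-)}$ having up-probability $\tfrac12(1-\mu^{(-)})$ and then applying Azuma--Hoeffding to the recentred martingale $W^{(-)}(n)+n\mu^{(-)}$ to rule out a descent of size $\eps N(1-\rho)$ in fewer than $\tfrac{\rho_*}{2}\eps N/m$ steps. Your Paley--Zygmund/Green-function sketch, as stated, cannot reach the required scale because it never uses the smallness of the drift of the conditioned chain.
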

\begin{proof}
    (i) Let us denote by~$W$ the discrete-time chain of a process whose law is that of the fittest class size conditioned to reach $0$ before returning to $3\eps\mathfrak a$. Let $1 \leq k \leq 2\eps \mathfrak a$ and consider the law of the next increment conditionally on the process staying below $3\varepsilon \mathfrak a$:
\begin{equation}\label{stepup}
   \P_k\Big( W(1)=k+1 | T_0 < T_{3\eps\mathfrak a} \Big)= \frac{\lambda_k}{\lambda_k+\mu_k} \frac{\P_{k+1}(T_0 < T_{3\eps\mathfrak a} )}{\P_k(T_0 < T_{3\eps\mathfrak a} )}. 
\end{equation}  
 Using the harmonicity of the function $R$ defined in~\eqref{RKdef} 
we have  
\begin{equation}\label{probratio}
    \frac{\P_{k+1}(T_0 < T_{3\eps\mathfrak a} )}{\P_k(T_0 < T_{3\eps\mathfrak a} )} = \frac{r_{k+1}+r_{k+2}+... + r_{3\varepsilon \mathfrak a-1}}{r_{k}+r_{k+1}+... + r_{3\varepsilon\mathfrak a-1}}.
\end{equation}
Noticing  that, for $l \leq \mathfrak a$, the oddratios $r_l$ are decreasing with $l$,
we obtain that

\begin{align}  \inf_{k \leq i \leq 3\varepsilon \mathfrak a-1}\frac{r_{i+1}}{r_i} - \frac{1}{3\varepsilon\mathfrak a-k-1} \label{sumratio}
&\leq 
\frac{r_{k+1}+r_{k+2}+... + r_{3\varepsilon \mathfrak a-1}}{r_{k}+r_{k+1}+... + r_{3\varepsilon\mathfrak a-1}}\le 
\sup_{k \leq i \leq 3\varepsilon \mathfrak a -1}\frac{r_{i+1}}{r_i} \end{align}
and we can sandwich the bounds in~\eqref{sumratio} by observing that for $1\le i \le 3\eps \mathfrak a$
\begin{align*}1- \frac{2m}{\rho}(1-\rho) \leq \frac{r_{i+1}}{r_i}= 1+ \frac{2m}{\rho}\Big( \frac{\rho}{1- (i+1)/N} -1\Big)
&\leq 1-  2m\Big( \frac{1}{\rho}-2\frac{i+1}{N}-1 \Big) \\
& \leq 1- \frac{2m}{\rho}(1-\rho)+13\eps (1-\rho)m,  \end{align*}
where we used that  $1/(1-x)\leq 1 + 2x$ for $x\leq 1/2$. Besides, we have, using the same inequality, for $k \leq 2\eps \mathfrak a$
$$\frac{1}{2}\Big(1+\frac{m}{\rho} ( 1- \rho) \Big)-{3}\eps{\rho}(1-\rho)m \leq \frac{\lambda_k}{\lambda_k+\mu_k} = \frac{1}{2}\Big(1+\frac{m}{\rho} \Big( 1- \frac{\rho}{1-k/N} \Big) \Big)\leq \frac{1}{2}\Big(1+\frac{m}{\rho} ( 1- \rho) \Big). $$
Combining the previous two chains of inequalities with~\eqref{stepup},~\eqref{probratio} and~\eqref{sumratio} we get 
\begin{multline*}\frac{1}{2} \left(  1+\frac{m}{\rho} ( 1- \rho) -6\eps\rho(1-\rho)m\right) \left(  1- \frac{2m}{\rho}(1-\rho)- \frac{1}{\eps \mathfrak a-1} \right)   \\
\leq   \P_k\left(   W(1)=k+1 | T_0 < T_{3\varepsilon \mathfrak a}  \right)   \leq \\ \frac{1}{2} \left(  1+\frac{m}{\rho} ( 1- \rho)  \right)  \left(   1- \frac{2m}{\rho}(1-\rho)+13\eps (1-\rho)m \right) .\end{multline*}
As for large $N$,
$$ \left( \frac{m(1-\rho)}{\rho}\right) ^2 + \frac{1}{\eps \mathfrak a-1} = o(m(1-\rho)), $$
we deduce that for $N$ large enough, 
\begin{eqnarray*}
    \frac{1}{2} \left(  1-\frac{m}{\rho} ( 1- \rho)-7\eps(1-\rho)m \right)  
&\leq &  \P_k \Big(  W(1)=k+1 | T_0 < T_{3\varepsilon \mathfrak a}  \Big)   \\&\leq& \frac{1}{2}\left(  1-\frac{m}{\rho} ( 1- \rho) +14\eps (1-\rho)m\right)  
\end{eqnarray*}
In particular, focusing on the lower bound we deduce that
\begin{align*} \P_{\eps\mathfrak a}\left(   T_{2\eps\mathfrak a}  < \infty  | T_0 < T_{3\varepsilon \mathfrak a} \right) &\leq 
\left(   \frac{1- \frac{m}{\rho}(1-\rho)(1-{7}\eps \rho)}{1+ \frac{m}{\rho}(1-\rho)(1-{7}\eps \rho)} \right) ^{\eps \mathfrak a}\\
&= 
 \left(  1-2\frac{\frac{m}{\rho}(1-\rho)(1-{7}\eps \rho)}{1+ \frac{m}{\rho}(1-\rho)(1-{7}\eps \rho)}  \right)  ^{\eps \mathfrak a}\\
& \leq \left(   1-\frac{m}{\rho}(1-\rho)(1-{7}\eps \rho) \right) ^{\eps \mathfrak a}\\
& \leq \exp \left( -\frac{\eps mN (1-\rho)^2(1-{7}\eps \rho)}{\rho} \right)   \end{align*}
which, again because of assumption~\eqref{expreg}, converges to $0$ as $N\to \infty$. This proves part (i) of the Lemma. 

To show part (ii), note that, as a consequence of the previous computations, there exists a simple random walk $W^{(-)}$ making up jumps with probability
\begin{equation}\label{expec}
  \frac{1}{2}- \frac{\mu^{(-)}}{2}:=\frac{1}{2} \left(  1- \frac{m}{\rho}(1-\rho)(1-{7}\eps \rho)  \right) 
\end{equation}  
such that with high probability, for any $n \in \N$, 
$$ W(n) \geq W^{(-)}(n). $$
Now, if we denote by $T^X_i$ the hitting time of $i$ by the random walk $X$, this coupling entails for any positive finite $C$
$$ \P_{\eps(1-\rho)N} \left(  T^W_0 > C\varepsilon \frac N{m} \right)  \geq \P_0 \left(  T^{W^{(-)}}_{-\eps (1-\rho)N} > C\varepsilon \frac N{m}  \right)  .$$
We have the following equality of  events: 
$$\left\{T^{W^{(-)}}_{-\varepsilon (1-\rho)N} <  C\varepsilon \frac N{m}\right\} = \left\{\inf_{0\le i \le C\varepsilon \frac N{m}} W^{(-)}({i}) < -\varepsilon (1-\rho)N\right\}.$$ 
\normalcolor
By assumption, 
$$M(n):= W^{(-)}(n)+n\mu^{(-)}$$
is 
a simple symmetric random walk. We thus obtain for $g \in \N$ 
\begin{equation}\mathbf P\left(\inf_{0\le i \le n}W^{(-)}(i)  {\leq} -g- n\mu^{(-)}\right) = 
\mathbf P\left(\inf_{0\le i \le n}M
(i)\le -g \right) \le \exp\left(-\frac{g^2}{2n}\right), \label{eq:boundWminuses}
\end{equation}
where the last estimate follows from the Azuma-Hoeffding inequality.  
With  the numbers of steps in \eqref{eq:boundWminuses} chosen as  
 $n:= \frac{C\varepsilon N}{m}$  we   obtain   from~\eqref{expec} that
%$$n\mu^{(+)} = \frac{C\varepsilon N}\rho (1-5\eps \rho),$$

$$n\mu^{(-)} = \frac{C(1-\rho)\varepsilon N}\rho (1-{7}\eps \rho),$$
Since the starting point was $\varepsilon (1-\rho) N$, the choice 
\begin{equation}\label{first} g+ n\mu^{(-)} = \varepsilon (1-\rho)N
\end{equation}
\normalcolor
turns into
%\begin{equation}\label{second}  a =\varepsilon N\left(1-\frac C\rho  (1-5\eps \rho)\right).
%\end{equation}
\begin{equation}\label{secondnew}  g =\varepsilon N(1-\rho)\left(1-\frac C\rho  (1-{7}\eps \rho)\right).
\end{equation}
Because of our assumption that $\rho$ is bounded away from $0$ we may choose 
$$ C = \frac{1}{2}\liminf_{N \to \infty}\rho_N=: \frac{{\rho_*}}{2}. $$
Consequently,
$$\frac {g^2}{2n} =\varepsilon \left(1-\frac C\rho (1-5\eps \rho)\right)^2 \frac{ {\mathfrak u_N}}{2C} $$
converges to $\infty$ as $N\to \infty$ under our standing assumption~\eqref{expreg}  and thus the right hand side of \eqref{eq:boundWminuses} converges to zero. 
\end{proof}
We are now ready to prove Theorem~\ref{thmY0}.\ref{thmY0:d}.
We know from Lemma~\ref{improbreturn} that with high probability, after its last hitting of the state $3 \eps \mathfrak a$ the process $Y_0$ once it has reached $\eps\mathfrak a$ 
\begin{itemize}
\item Does not reach the state $2\eps \mathfrak a$ anymore
\item Makes a number of jumps which is at least $\eps N{\rho_*}/(2m)$ and thus a number of downward jumps which is at least $\eps N{\rho_*}/(4m)$
\normalcolor
\end{itemize}

Noticing that, when $Y_0$ is in state $k$, an individual's ``death'' is due a mutation with probability 
$$ \frac{2m}{1-k/N+2m} \geq \frac{2m}{1-\eps}, $$
we thus obtain that on $Y_0$'s way to extinction and while it is of size smaller than $2\eps\mathfrak a$, the number of mutations affecting the ``type 0'' population  is at least 
%$$ \frac{\eps N \underline{\rho}}{4m(1-\rho)} \frac{2m}{1-\eps}  \gg \frac{1}{m(1-\rho)}$$
$$ 
\frac{\eps N {\rho_*}}{4m}  \cdot \frac{2m}{1-\eps}  \gg \frac{1}{m(1-\rho)}$$
where the last estimate follows because  $Nm(1-\rho) \ge Nm(1-\rho)^2 \gg 1$, cf. our standing condition~\eqref{expreg}. $\Box$

\section{Proof of Theorem~\ref{th:clicktimepois}}
\label{sec4}
In this section we complete the proof of Theorem~\ref{th:clicktimepois}. The core idea is to proceed inductively and to show that with high probability the fittest class at the time of its disappearance has left a number $\gg \mathfrak c$ of mutants, which helps establishing the ``next fittest class''. 
\subsection{Lower-bounding the size of the new fittest class at a clicktime
}
The following key proposition uses the hierarchical autonomy of the tournament ratchet stated in Remark~\ref{autonomy}; its proof builds on Theorem~\ref{thmY0}.\ref{thmY0:e}.
\begin{proposition}\label{prop:sizenewbestclass}
Let  $(Y_0^N,Y_1^N)$ be a bivariate birth-and-death process whose jump rates are the same as those of $(\mathfrak N^{(N)}_\kappa, \mathfrak N^{(N)}_{\kappa+1})$ specified in Definition~\ref{tfpdyn} on the event $\{K_N^\star(t) = \kappa\}$. Assume there exists a sequence $(j_N)$ with $j_N\gg \mathfrak c_N$ such that
$$\P(Y_0^{(N)}(0) \ge j_N) \to 1 \quad\mbox{ as } N\to \infty.$$
Then there exists a sequence $(g_N)$ with $g_N \gg \mathfrak c_N$ such that 
\begin{equation}
    \P\left(Y^{(N)}_1(T_0) \ge g_N\right) \to 1 \quad\mbox { as } N\to \infty,
\end{equation}
where $T_0= T_0^{(N)}$ is the hitting time of $0$ of the process $Y_0^{(N)}$.
\end{proposition}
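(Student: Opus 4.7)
The plan is to bootstrap from Theorem~\ref{thmY0}.\ref{thmY0:e} and a stochastic-domination argument for $Y_1^{(N)}$ by independent, slightly supercritical branching processes. First I would fix a small $\eps>0$ and let $L=L_\eps^{(N)}$ be the last time $Y_0^{(N)}$ visits $\lfloor 2\eps\mathfrak a_N\rfloor$ before $T_0$. Since $Y_0^{(N)}$ is autonomous (Remark~\ref{autonomy}) and, by hypothesis, starts above $j_N\gg \mathfrak c_N$, Theorem~\ref{thmY0}.\ref{thmY0:e} applies, so the number $M$ of mutation events creating a fresh class-$1$ individual during $[L,T_0]$ is $\gg \mathfrak c_N$ with high probability; inspection of the proof of that theorem in fact yields $M=\Omega(\eps N)$, which is much larger than $\mathfrak c_N$ since $N/\mathfrak c_N=\mathfrak u_N/((1-\rho_N)\rho_N)\to\infty$ under~\eqref{expreg}.

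Next I would observe that on $[L,T_0]$ one has $Y_0\le 2\eps\mathfrak a_N$, and a direct computation from Definition~\ref{tfpdyn} shows that the per-capita drift for $Y_1$ equals $(s_N-m_N)-s_N(2Y_0+Y_1)/N$, which is at least $(1-O(\eps))/\mathfrak c_N$ as long as $Y_1\le (1-4\eps)\mathfrak a_N$. Under this $Y_1$-cap (whose failure on $[L,T_0]$ would already give $Y_1(T_0)\gg \mathfrak c_N$ via a separate argument, using the hitting-time estimates of Theorem~\ref{thmY0}.\ref{thmY0:a} to show that $Y_1$ cannot drop from $\Theta(\mathfrak a_N)$ to $o(\mathfrak c_N)$ within the remaining $O(\mathfrak c_N\log\mathfrak a_N)$ time units), the class-$1$ lineage $\mathcal L_i$ spawned by the $i$th mutation at time $t_i$ stochastically dominates an independent slightly supercritical birth-death process $\widehat{\mathcal L}_i$ with supercriticality $\widehat r_N\ge (1-O(\eps))/\mathfrak c_N$ and birth/death rates near $1/2$. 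Consequently
$$Y_1^{(N)}(T_0)\ \ge\ \sum_{i=1}^M \widehat{\mathcal L}_i(T_0-t_i),$$
where each $\widehat{\mathcal L}_i$ has survival probability $\widehat p(\Delta)\sim 2/\mathfrak c_N$ for $\Delta\gg \mathfrak c_N$ and conditional mean given survival at time $\Delta$ of order $e^{\widehat r_N\Delta}$.

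To conclude I would combine these branching estimates with the time-scale bound $\E[T_0-L]=O(\mathfrak c_N\log\mathfrak a_N)$ from Theorem~\ref{thmY0}.\ref{thmY0:a} and the Cox-process structure of the mutation times $(t_i)$, which have intensity $m_N Y_0(u)\,du$: one first shows that w.h.p. a macroscopic fraction of the $t_i$ lie at distance $\gg \mathfrak c_N$ before $T_0$, so that conditionally on the $Y_0$-trajectory the $\widehat{\mathcal L}_i(T_0-t_i)$ are independent, each with mean $\ge 1$. The main obstacle will be the concentration step: since the per-lineage survival probability is only $O(1/\mathfrak c_N)$, a naive Chernoff bound on the surviving-lineage indicators delivers only $\Omega(M/\mathfrak c_N)$ survivors, which is $\gg 1$ but not $\gg \mathfrak c_N$ in general. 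Closing this gap requires either exploiting the much larger $M=\Omega(\eps N)$ (checking in which sub-regime of~\eqref{expreg} this alone suffices), or, more robustly, a Paley-Zygmund second-moment argument that additionally uses the exponentially growing conditional-on-survival mean $e^{\widehat r_N(T_0-t_i)}$ of lineages born far before $T_0$. The latter is sustained by the lower bound $T_0-L=\Omega(\mathfrak c_N\log\mathfrak a_N)$ w.h.p., which in turn can be extracted from the same hitting-time arguments that underlie Theorem~\ref{thmY0}.\ref{thmY0:a}.
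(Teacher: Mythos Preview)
Your overall strategy is the paper's: restrict to $\mathfrak P=[L,T_0]$, invoke Theorem~\ref{thmY0}.\ref{thmY0:e} for $\gg\mathfrak c_N$ mutations, impose a cap on $Y_1$, and under the cap lower-bound $Y_1$ by a supercritical branching process with immigration. However, two steps need repair. Your drift bound is miscomputed: with $Y_0\le 2\eps\mathfrak a_N$ and your cap $Y_1\le(1-4\eps)\mathfrak a_N$ one gets $2Y_0+Y_1\le\mathfrak a_N$ and hence per-capita drift $\ge (s_N-m_N)-s_N\mathfrak a_N/N=0$, not $(1-O(\eps))/\mathfrak c_N$; a much tighter cap is needed, and the paper takes $Y_1\le\mathfrak a_N/4$ with $\eps\le1/8$. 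More importantly, the ``separate argument'' you invoke for the cap-breach case is in fact the main technical content of the paper's proof (its Claim~(C)): once $Y_1$ reaches $\mathfrak a_N/4$ during $\mathfrak P$, one must show it stays above $\mathfrak a_N/8$ until $T_0$. This is done by coupling $Y_1$ with a process $\widetilde Y_1$ in which $n_0$ is frozen at $2\eps\mathfrak a_N$, bounding the probability that an excursion of $\widetilde Y_1$ from $\mathfrak a_N/4$ reaches $\mathfrak a_N/6$ by $e^{-c\,\mathfrak u_N}$, and then controlling the number of such excursions within the $O(\mathfrak c_N\log\mathfrak a_N)$ length of~$\mathfrak P$. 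Theorem~\ref{thmY0}.\ref{thmY0:a} concerns hitting times of $Y_0$, not $Y_1$, and does not deliver this.

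Your fixes for the concentration step under the cap also fail as stated. Theorem~\ref{thmY0}.\ref{thmY0:a} gives only \emph{upper} bounds on expected hitting times, so it cannot yield the lower bound $T_0-L=\Omega(\mathfrak c_N\log\mathfrak a_N)$ you invoke; a separate variance computation for the conditioned descent of $Y_0$ would be needed. And a bare Paley--Zygmund inequality on $X=\sum_i\widehat{\mathcal L}_i(\Delta_i)$ gives only $\P(X\ge\theta\,\E X)\ge(1-\theta)^2(\E X)^2/\E X^2$; since a single early-born lineage with large $\Delta_i$ can make $\E X^2/(\E X)^2$ of order $\mathfrak c_N$, this ratio does not tend to~$1$.
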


\begin{proof} With regard to Remark~\ref{autonomy} we consider a bivariate birth-and-death process $(Y_0,Y_1)$ whose jump rates are the same as those of $(\mathfrak N_\kappa, \mathfrak N_{\kappa+1})$ specified in Definition~\ref{tfpdyn} on the event $\{K^\star(t) = \kappa\}$. In particular,
as long as $(Y_0, Y_1)$ is in state $(n_0,n_1)$  the upward jump rate of $Y_1$ is
\begin{equation*}
 \mathfrak{b}(n_0,n_1):= m n_0 + n_1 \cdot \left[ \frac12 \left(1- \frac{n_1}{N}\right) + \frac{m}{\rho } \left( 1- \frac{n_0}{N} - \frac{n_1}{N}\right) \right]
\end{equation*}
and the downward jump rate is given by
\begin{equation*}
   \mathfrak{d}(n_0,n_1):=  n_1 \cdot \left[\frac12 \left(1- \frac{n_1}{N}\right)  +m  + \frac{m}{\rho }  \cdot \frac{n_0}{N}\right].
\end{equation*}
In accordance with Theorem~\ref{thmY0}.\ref{thmY0:d} we denote by $\mathfrak P=[L,  T_0\,)$ the period between the time at which~$Y_0$ visits $2\eps\mathfrak a$ for the last time and the time of the extinction of $Y_0$. In the next arguments we will make use of the fact that this period cannot last too long; more specifically (see~\eqref{durlasttravel}),  $\mathbf E[T_0-L] =O(\mathfrak c \log \mathfrak a)$.  We now claim:

(C) If during $\mathfrak P$ the process $Y_1$ ever reaches $ \mathfrak a/4$, then with high probability it will not drop down below $\mathfrak a/8 \gg  \mathfrak c$  by the end of $\mathfrak P$ (which corresponds to the time of the next click).

To prove claim (C) we \normalcolor
introduce a birth-and-death process $\widetilde{Y}_1$  whose birth and death rates in state $n_1 \in \N$ are 
\begin{equation*}
 \mathfrak{b}(n_1):=  n_1\beta(n_1)= n_1\cdot \left[ \frac12 \left(1- \frac{n_1}{N}\right) + \frac{m}{\rho } \left( 1- \frac{2\eps \mathfrak a}{N} - \frac{n_1}{N}\right) \right],
\end{equation*}
\begin{equation*}
   \mathfrak{d}(n_1):= n_1\delta(n_1)= n_1 \cdot \left[\frac12 \left(1- \frac{n_1}{N}\right)  +m  + \frac{m}{\rho }  \cdot \frac{2\eps \mathfrak a}{N}\right].
\end{equation*}
 Define $T:= \min \left( \mathfrak P \cap \{t\mid Y_1(t)\le \mathfrak \lfloor \mathfrak a\rfloor /4\}\right)$. Then on the event $\{T<\infty\}$ the processes $Y_1$ and $\widetilde Y_1$ can be coupled such that
$$Y_1(T) = \widetilde Y_1(T) \,\mbox{ and }\, Y_1(t) \ge \widetilde Y_1(t)  \, \mbox{ a.s. for } t\in \mathfrak P \cap [T, \infty). $$
To ease notation, let us shift time and assume (again omitting the floor brackets for convenience) that 
$$ \widetilde{Y}_1(0) = Y_1(0) = \frac{\mathfrak a}{4}.  $$

Let us denote by $W$ a discrete random walk making $+1$ jumps with probability $p$ and $-1$ jumps with probability $q=1-p$. Then if we take $a,b \in \N$ and denote by $T_k^W$ the walk's $W$ first hitting of $K$, we know that 
$$ \P (T^W_{-a}<T^W_b) = \frac{(p/q)^b-1}{(p/q)^{a+b}-1} .$$
Let $\mathfrak a/6 \leq k \leq \mathfrak a/2 $. Then 
\begin{align*}
    \frac{\beta_k}{\beta_k + \delta_k}\geq \frac{\beta_{\mathfrak a/2}}{\beta_{\mathfrak a/2}+\delta_{\mathfrak a/2}} = \frac{1}{2}+ \frac{m}{2\rho}\Big( \frac{(1+\rho)/2-4\eps (1-\rho)-\rho}{1+m/\rho+m} \Big) \geq \frac{1}{2}+ \frac{m}{8\rho}(1-\rho)
\end{align*}
for $\eps := 1/24$ and  $N$ large enough. In particular, 
$$ \frac{\beta_k}{\delta_k} \geq 1+ \frac{m}{2\rho}(1-\rho), $$
$$   \frac{\beta_k}{\beta_k + \delta_k}-  \frac{\delta_k}{\beta_k + \delta_k} \geq \frac{m}{4\rho}(1-\rho) $$
and if we denote by $\tilde{T}_k$ the hitting time of $k$ by $\tilde Y_1$ we obtain for $N$ large enough
$$ \P_0\big(\tilde T_{-\mathfrak a/12} < \tilde T_{\mathfrak a/4} \big) \leq \Big(1+ \frac{m}{2\rho}(1-\rho)\Big)^{-\mathfrak a/12} \leq e^{-(\mathfrak a/12)m(1-\rho)/2\rho}= e^{-u_N/24\rho}. $$
Let us now consider the sequence $(\mathscr P_i, i \in \N)$ of successive parts of paths of the process $\tilde Y_1$ with initial state $\mathfrak a/4$ and which come back to $\mathfrak a/4$ after having visited $\mathfrak a/2$ and not $\mathfrak a/6$. According to the previous result, there are more than a geometric of parameter $ e^{-u_N/24\rho}$ of such paths before reaching $\mathfrak a/6$. Then each $\mathscr P_i$ is made of at least $\mathfrak a/12$ steps of independent exponential random variables of intensity at most $2\mathfrak a$. Let us denote by $|\mathscr P_i|$ the duration of the path $\mathscr P_i$. We will prove that 
$$ \P \big( |\mathscr P_i| <1/36 \big)\leq e^{- \mathscr Q \mathfrak a} $$
where
$$ \mathscr Q = 1/36+\log 2 - \log 3 <0. $$
For this, we denote by $\mathcal E_i$ a sequence of independent exponential random variables with parameter~$2 \mathfrak a$. We get, using Markov inequality
\begin{align*}
    \P \Big( |\mathscr P_i| <\frac{1}{36} \Big)\leq \P \Big( \sum_{i=1}^{\mathfrak a/12} \mathcal{E}_i <\frac{1}{36} \Big) & =  \P \Big( e^{- \mathfrak a\sum_{i=1}^{\mathfrak a/12} \mathcal{E}_i} >e^{-\frac{\mathfrak a}{36}} \Big)\\
    & \leq e^{\frac{\mathfrak a}{36}}\E \Big[e^{- \mathfrak a\sum_{i=1}^{\mathfrak a/12} \mathcal{E}_i}\Big]\\
    & \leq e^{\frac{\mathfrak a}{36}}\Big( \frac{2}{3} \Big)^{\mathfrak a/12}= e^{- \mathscr Q \mathfrak a} .
\end{align*}
We thus have the following properties:
\begin{enumerate}
    \item The period $\mathfrak P$ has an expected duration $O(\mathfrak c \log \mathfrak a)$ (see~\eqref{durlasttravel}).
    \item If during $\mathfrak P$ the process $Y_1$ ever reaches $ \mathfrak a/4$, then with a probability larger than $1-e^{-u_N/30\rho}$ it will reach $\mathfrak a/2$ and come back to $\mathfrak a/4$ before reaching $\mathfrak a/6$ and this excursion will take at least a time $1/36$.
\end{enumerate}
Since
$$ \mathfrak c \log \mathfrak{a } e^{-u_N/24\rho} \leq u_N e^{-u_N/24\rho} \to 0 \quad \mbox{ as } N \to \infty,$$
this concludes the proof of claim (C).

We now consider the events
$$G_1:= \{Y_1( L) > \mathfrak a/4\}, \, G_2:= \{Y_1( L) \le \mathfrak a/4\}, \,F:= \{Y_1(t) = \mathfrak a/4 \mbox{ for some } t \in \mathfrak P\}.$$
Because of Claim (C) we have $Y_1(T_0) \gg \mathfrak c$ with high probability on the event $F$, and obviously we have $Y_1(T_0) >\mathfrak a/4 \gg \mathfrak c$ on the event $G_1\cap F^c$. 

It remains to consider the event  $G_2\cap F^c$, on which $Y_1$ does not exceed $\mathfrak a/4$ during the period $\mathfrak P$.

We know from Theorem~\ref{thmY0}.\ref{thmY0:e} that the number of mutants that ``immigrate'' into the second fittest class during period $\mathfrak P$ is with high probability $\gg \mathfrak c$.
The difference between the upward and the downward jump rates of the second fittest class is
\begin{equation}\label{eq:driftprop3.1}
\mathfrak{b}(n_0,n_1)-\mathfrak{d}(n_0,n_1)=   m n_0 + \frac{mn_1}{\rho N} \left[ N  (1-\rho)-2n_0-n_1 \right].
  \end{equation}
Thus for $\eps \le \frac 18$, as long as the size of the fittest class  is smaller than $2\eps\mathfrak a$  and the size of the second fittest class is $\le \mathfrak a/4$, the supercriticality of the second fittest class is $\ge \frac 12 m(1-\rho) = \frac 1{2\mathfrak c}$. Hence on the event $G_2\cap F^c$ we can lower-bound $Y_1$ by a  branching process with supercriticality $\ge \frac 1{2\mathfrak c}$  and a number of immigrants during period $\mathfrak P$ that is $\gg \mathfrak c$. This process will reach a size $\gg \mathfrak c$ with high probability, which allows to conclude the proof of the Proposition. \end{proof} 

\subsection{From one click to the next} Let $\mathcal T^{(i)}_N$, $i\ge 1$,  be as in~\eqref{def_Ti}, with $\mathcal T^{(0)}_N:= 0$.

(a) 
 We show by  induction: For all $i\ge 0$ there exists a sequence $(j_N)$ with $j_N \gg \mathfrak c_N$ such that
\begin{equation}\label{newmany}
   \P\left(\mathfrak N_i^{(N)}(\mathcal T^{(i)}_N) \ge j_N \right)\to 1 \quad\mbox{ as } N \to \infty.   
 \end{equation}
 Indeed, for $i=0$ this follows from assumption~\eqref{incondnew}, while the induction step is a direct corollary of Proposition~\ref{prop:sizenewbestclass} combined with Remark~\ref{autonomy}.

 (b) By definition the times $\mathcal T_N^{(i)}$ are the jump times of the process $K_N^\ast$. Since \, $\mathfrak N_{i-1}^{(N)}(\mathcal T^{(i)}_N)=0$ \, by construction,  the events  $\{K_N^\star(\mathcal T_N^{(i)})=i\}$ and $\{\mathfrak N_0^{(N)\star}(\mathcal T^{(i)}_N) = \mathfrak N_i^{(N)}(\mathcal T^{(i)}_N)\ge j_N\}$ are implied by the event appearing in~\eqref{newmany}. Again employing Remark~\ref{autonomy}, we can thus apply Theorem~\ref{thmY0}.\ref{thmY0:d}  combined with~\eqref{newmany}  to conclude by induction that    $ \left( \mathcal T_N^{(i)}-T_N^{(i-1)}\right)\big /{e_N} $, ${i\in \mathbb N}$, converges in distribution as $N\to \infty$ 
 to a sequence of idependent standard exponential random variables.\\

\paragraph{\bf Funding} This work was partially funded by the Chair "Modélisation Mathématique
et Biodiversité" of VEOLIA-Ecole Polytechnique-MNHN-F.X. and by CNRS via an International Emerging Action (IEA) project. 
\bibliography{bibratchet} 
\bibliographystyle{habbrv}
\end{document}